\newif\ifdraft
\ifdraft\usepackage[notref,notcite]{showkeys}\fi
\allowdisplaybreaks \numberwithin{equation}{section}
\newtheorem{thm}{Theorem}[section]
\newtheorem{lmm}[thm]{Lemma}
\newtheorem{pro}[thm]{Proposition}
\newtheorem{cor}[thm]{Corollary}
\theoremstyle{definition}
\newtheorem{definition}[thm]{Definition}
\theoremstyle{remark}
\newtheorem{rmk}[thm]{Remark}
\newtheorem*{remark*}{Remark}
\newcommand{\Chi}[1]{\Chi*{\{#1\}}}
\newcommand\Chi*[1]{\chi_{\raise-.5ex\hbox{$\scriptstyle#1$}}}
\renewcommand{\leq}{\leqslant}
\renewcommand{\geq}{\geqslant}
\renewcommand{\ge}{\geqslant}
\renewcommand{\le}{\leqslant}
\newcommand\blfootnote[1]{%
  \begingroup
  \renewcommand\thefootnote{}\footnote{#1}%
  \addtocounter{footnote}{-1}%
  \endgroup
}
\begin{document}
\title{Continuous and discrete one dimensional autonomous fractional ODEs}

\begin{abstract}
 In this paper, we study 1D autonomous fractional ODEs $D_c^{\gamma}u=f(u), 0< \gamma <1$, where $u: [0,\infty)\mapsto\mathbb{R}$ is the unknown function and $D_c^{\gamma}$ is the generalized Caputo derivative introduced by Li and Liu (	arXiv:1612.05103). Based on the existence and uniqueness theorem and regularity results in previous work, we show the monotonicity of solutions to the autonomous fractional ODEs and several versions of comparison principles.  We also perform a detailed discussion of the asymptotic behavior for $f(u)=Au^p$. In particular, based on an Osgood type blow-up criteria, we find relatively sharp bounds of the blow-up time in the case $A>0, p>1$. These bounds indicate that as the memory effect becomes stronger ($\gamma\to 0$), if the initial value is big, the blow-up time tends to zero while if the initial value is small, the blow-up time tends to infinity. In the case $A<0, p>1$, we show that the solution decays to zero more slowly compared with the usual derivative.  Lastly, we show several comparison principles and Gr\"{o}nwall inequalities for discretized equations, and perform some numerical simulations to confirm our analysis.
\end{abstract}
\author{Yuanyuan Feng}
\address{\hskip-\parindent
Yuanyuan Feng\\
Department of Mathematical Sciences\\
Carnegie Mellon University\\
Pittsburgh, PA 15213, USA
}
\email{yuanyuaf@andrew.cmu.edu}

\author{Lei Li}
\address{\hskip-\parindent
Lei Li\\
Department of Mathematics\\
Duke University\\
Durham, NC 27708, USA
}
\email{leili@math.duke.edu}

\author{Jian-Guo Liu}
\address{\hskip-\parindent
Jian-Guo Liu\\
Departments of Physics and Mathematics\\
Duke University\\
Durham, NC 27708, USA
}
\email{jliu@phy.duke.edu}

\author{Xiaoqian Xu}
\address{\hskip-\parindent
Xiaoqian Xu\\
Department of Mathematical Sciences\\
Carnegie Mellon University\\
Pittsburgh, PA 15213, USA
}
\email{xxu@math.cmu.edu}

\maketitle

\section{Introduction}
\blfootnote{2010 {\it Mathematics Subject Classification}. Primary 34A08.} \blfootnote{{\it Key words and phrases.} Fractional ODE, Caputo derivative, Volterra integral equation, blow-up time, discrete Gr\"{o}nwall inequality.}
The fractional calculus in continuous time has been used widely in physics and engineering for memory effect, viscoelasticity, porous media etc \cite{gm97, df02, kst06,mpg07,diethelm10, ala16, taylorremarks}. There are two types of fractional derivatives that are commonly used: the Riemann-Liouville derivatives and the Caputo derivatives (See \cite{kst06}). 

The Riemann-Liouville derivatives are named after Bernhard Riemann and Joseph Liouville. Liouville was the first to study fractional derivative rigorously (see, for example, \cite{munkhammar2004riemann,drapaca2012fractional} for a better survey). On the other hand, the Caputo's definition of fractional derivatives was first introduced in \cite{caputo1967linear} to study the memory effect of energy dissipation for some anelastic materials, and soon became a useful modeling tool  in engineering and physical sciences to construct physical models for nonlocal interactions in time (see \cite{zaslavsky2002chaos}). 

Compared with Riemann-Liouville derivatives, Caputo derivatives remove the singularities at the origin and have many properties that are similar to the ordinary derivative so that they are suitable for initial value problems \cite{liliu16}. However, the classical $\gamma$-th Caputo derivative of a function requires an integer order derivative no less than $\gamma$, which seems to be artificial. In \cite{ala16}, Allen, Caffarelli and Vasseur have introduced an alternative form of Caputo derivatives to avoid using higher regularity of the function. In \cite{liliu16}, another extension of Caputo derivatives was proposed, by which the higher derivative of the function is not needed either and can recover the definition in \cite{ala16}. Moreover, this new definition allows us to transform fractional ODEs into Volterra type integral equations,  by deconvolution through an underlying group property without the higher regularity assumption. This provides a convenient framework for us to study the fractional ODEs with Caputo derivatives. 

There is a huge amount of literature discussing fractional differential equations. However, few of them discuss the behavior of the solutions to fractional ODEs systematically. For reference, some results can be found in \cite{df02, diethelm10} using the traditional Caputo derivatives. 

In this paper, we use the new definition of Caputo derivative in \cite{liliu16} (also see Definition \ref{def:caputo} below) to make a detailed investigation of the nonlinear fractional ODE 
\begin{equation}\label{eq:ode1}
D_c^{\gamma}u= f(u), ~ u(0)=u_0,
\end{equation}
for $\gamma\in (0, 1)$. Here $f$ is locally Lipschitz whose domain contains $u_0$, and $D_c^{\gamma}$ represents the Caputo derivative of order $\gamma$. In the rest of this paper we will assume $u_0\ge 0$ without loss of generality (if $u_0<0$, we can do change of variables $v=-u$ and study $D_c^{\gamma}v=\tilde{f}(v)$ where $\tilde{f}(v):=-f(-v)=-f(u)$). Studying the behavior of the solution to this fractional ODE is important for the analysis of fractional partial differential equations (fractional PDEs), as we usually need {\it a priori} estimates of certain energies of the solution to a fractional PDE, which have form
\[
D_c^{\gamma}E\le AE^p.
\]
By the comparison principles in \cite{liliu16} or in Section \ref{sec:comp}, the energy norm may be controlled by the solution of the fractional ODE \eqref{eq:ode1}. Hence, we will focus on the particular cases $f(u)=Au^p$ in detail.

According to \cite{liliu16}, the fractional ODE \eqref{eq:ode1} is equivalent to a Volterra type integral equation without assuming high regularity of the solution, which is the important starting point for our study. It is well-known that the solutions of 1D autonomous ODEs with usual first order derivative are monotone,  since the solution curves never cross zeros of $f$ and $f(u)$ has a definite sign. One of our main results is that if $f\in C^1$ and $f'$ is locally Lipschitz, the first order derivative of the solution to the fractional ODE \eqref{eq:ode1} does not change sign and therefore the solution is monotone (see Theorem \ref{thm:mon}). This is based on Lemma \ref{lmm:pos}, which is a slightly different version of \cite[Theorem 1]{weis75}. Lemma \ref{lmm:pos} ensures the positivity of the solutions of the integral equation that $y=u'$ or $y=-u'$ satisfies:
\begin{gather*}
y(t)+\int_0^t(t-s)^{\gamma-1}v(s)y(s)ds=\alpha t^{\gamma-1}, ~\alpha>0,
\end{gather*}
where $v$ is continuous. The idea is to use the resolvent for the kernel $\lambda t^{\gamma-1}$ to transform this integral equation into another integral equation (see \eqref{eq:alterinteq}) so that all the functions involved are non-negative. The solution to the new integral equation \eqref{eq:alterinteq} is nonnegative, implying that the first derivative of the solution to \eqref{eq:ode1} does not change sign. 

Another contribution of this paper is to discuss the special cases $f(u)=Au^p$ in detail and to reveal several interesting roles of memory. In particular, for the cases $A>0, p>1$, we find relatively sharp estimates of the blow-up time $T_b$. The lower bound of $T_b$ is important for the inequality $D_c^{\gamma}E\le AE^p$ since it ensures that $E$ is defined and controlled by the solution of \eqref{eq:ode1} up to this lower bound. Through these bounds,  we find that there exist $u_{02}>u_{01}>0$ so that if $u_0<u_{01}$, the blow-up time $T_b\to +\infty$ as $\gamma\to 0$ (the memory becomes stronger) and if $u_0>u_{02}$, $T_b\to 0$, as $\gamma\to 0$ (See Theorem \ref{buT}). For the cases $A<0, p>1$, we show that under the memory, the solution decays to zero much more slowly compared with the usual ODE (see Theorem \ref{thm:slowdecay}).

By discretizing the differential equation \eqref{eq:ode1} or the equivalent integral equation, we obtain two classes of numerical schemes or discrete equations. Using some discrete comparison principles, we show that if $f$ is nonnegative, nondecreasing, then the numerical solutions to the explicit schemes for the integral equation are absolutely stable: $u^n \le u(nk)$ where $k$ is the time step (Proposition \ref{pro:intnum}). 
In the case $f$ is nonnegative, nondecreasing and the solution to \eqref{eq:ode1} is convex, we prove that the numerical solutions to the explicit schemes for the differential equation are also absolutely stable: $u_{ex}^n\le u(nk)$ and  the numerical solutions to the implicit schemes for the differential equation are bounded below as $u_{im}^n\ge u(nk)$ (Theorem \ref{thm:diffnum}). Hence, the explicit schemes may be used to prove the stability and convergence of some approximation schemes for fractional PDEs and thus the convergence and existence of solutions. The implicit schemes may be used to prove positivity of solutions and to estimate the blow-up time. 
 
 The rest of the paper is organized as follows: In Section \ref{sec:pre}, we introduce the basic definitions, notations and results that are mainly established in \cite{liliu16}. In Section \ref{sec:basicpro}, we study the basic properties of the solutions. In particular, (1) given $f(u)$ is smooth, the solutions are smooth in $(0,\infty)$ but only $\gamma$-H\"older continuous at $t=0$; (2) the solutions are monotone on the interval of existence; (3) an Osgood type finite time blow-up criteria holds provided that $f(u)$ is positive nondecreasing.  In Section \ref{sec:comp}, we prove several comparison principles. In Section \ref{sec:specialcase}, we study the special cases $f(u)=Au^p$. More precisely, for $A>0, p>1$, we provide relatively sharp bounds for the blow-up time, while for $A<0, p>1$, we show the slow decaying as $t\to\infty$. These discussions reveal the roles of memory introduced by fractional derivatives. Lastly, in Section \ref{sec:discrete}, we discuss the discrete equations. To be more specific, we show several discrete comparison principles and use them to study some explicit and implicit schemes. Some numerical simulations are then performed using these schemes to verify our analysis for the continuous cases.

\section{Preliminaries}\label{sec:pre}
In this section we collect some notations and definitions we will use in this paper. 
\subsection{Fractional derivatives}
First, let us make a brief introduction of the definition of fractional derivatives. Before we state the definition, we need the following clarification of notation:
\begin{definition}\label{def:rightlimit}
For a locally integrable function $u\in L_{loc}^1(0, T)$, if there exists $u_0\in\mathbb{R}$ such that 
\begin{gather}
\lim_{t\to 0+}\frac{1}{t}\int_0^t|u(s)-u_0|ds=0,
\end{gather}
we call $u_0$ the right limit of $u$ at $t=0$, denoted as $u(0+):=u_0$.
\end{definition}

As in \cite{liliu16}, we use the following distributions $\{g_\beta\}$ as the convolution kernels for $\beta>-1$:
\begin{gather*}
g_{\beta}=\displaystyle
\begin{cases}
\frac{\theta(t)}{\Gamma(\beta)}t^{\beta-1},& \beta>0,\\
\delta(t), &\beta=0,\\
\frac{1}{\Gamma(1+\beta)}D\left(\theta(t)t^{\beta}\right), & \beta\in (-1, 0).
\end{cases}
\end{gather*} 
Here $\theta(t)$ is the standard Heaviside step function, $\Gamma(\gamma)$ is the gamma function, and $D$ means the distributional derivative. 

$g_{\beta}$ can also be defined for $\beta\le -1$ (see \cite{liliu16}) so that these distributions form a convolution group $\{g_{\beta}: \beta\in\mathbb{R}\}$, and consequently we have 
\begin{gather}\label{eq:group}
g_{\beta_1}*g_{\beta_2}=g_{\beta_1+\beta_2},
\end{gather}
where the convolution between distributions with special non-compact supports is defined through the partition of unit of $\mathbb{R}$.

Now we are able to give the definition of the fractional derivatives.
\begin{definition}\label{def:caputo}
Let $0<\gamma<1$. Consider $u\in L_{loc}^1(0, T)$ that has a right limit $u(0+)$ at $t=0$ in the sense of Definition \ref{def:rightlimit}. The $\gamma$-th order Caputo derivative of $u$ is a distribution in $\mathscr{D}'(-\infty, T)$ with support in $[0, T)$, given by 
\[
D_c^{\gamma}u=g_{-\gamma}*\Big(\theta(t)u\Big)-u(0+)g_{1-\gamma}
=g_{-\gamma}*\Big((u-u(0+))\theta(t)\Big).
\]
\end{definition}
\begin{rmk}
In the case $T=\infty$, the convolution $g_{-\gamma}*u$ is defined through partition of unit of $\mathbb{R}$. In the case of $T<\infty$, $g_{-\gamma}*u$ should be understood as the restriction of the convolution onto $\mathscr{D}'(-\infty,T)$. One can refer to \cite{liliu16} for the technical details. 
\end{rmk}
\begin{rmk}
As discussed in \cite{liliu16}, if there is a version of $u$ (i.e. modifying $u$ on a Lebesgue measure zero set) that is absolutely continuous on $(0, T)$, which is denoted as $u$ again,  then the Caputo derivative is reduced to 
\begin{gather}\label{eq:captra}
D_c^{\gamma}u=\frac{1}{\Gamma(1-\gamma)}\int_0^t\frac{u'(s)}{(t-s)^{\gamma}}ds,
\end{gather}
which is the traditional definition of Caputo derivative. Whenever $u$ is $\gamma+\delta$-H\"older continuous ($\forall \delta>0$), we have
\begin{gather}\label{eq:capint}
D_c^{\gamma}u=\frac{1}{\Gamma(1-\gamma)}\left(\frac{u(t)-u(0)}{t^{\gamma}}+\gamma \int_0^t\frac{u(t)-u(s)}{(t-s)^{\gamma+1}}ds\right).
\end{gather}
Equation \eqref{eq:capint} is the definition for the Caputo derivative used in \cite{ala16}. Intuitively, \eqref{eq:capint} is obtained by integration by parts from \eqref{eq:captra}.
\end{rmk}
Definition \ref{def:caputo} is more useful than the traditional definition (Equation \eqref{eq:captra}) (see for instance \cite{gm97, df02, kst06,mpg07,diethelm10, taylorremarks}) theoretically, since it asks for little regularity and reveals the underlying group structure. With the assumption that $u$ is locally integrable and has a right limit at $t=0$, Definition \ref{def:caputo} and the group property \eqref{eq:group} allow one to convert \eqref{eq:ode1} into the integral form 
\begin{equation}\label{eq:vol}
u(t)=u(0+)+g_{\gamma}*\Big(\theta(t)f(u(t))\Big)=u(0+)+\frac{1}{\Gamma(\gamma)}\int_0^t(t-s)^{\gamma-1}f(u(s))ds,
~\forall t\in (0, T).
\end{equation}

\begin{rmk}
Obtaining Equation \eqref{eq:vol} from the traditional Caputo derivative \eqref{eq:captra} needs us to assume in advance that the unknown function $u$ has too much regularity (for example, for the definition to make sense, we have to assume that $u'(s)$ exists). Using the new definition of Caputo derivative in \cite{liliu16}, the integral form \eqref{eq:vol} is equivalent to Equation \eqref{eq:ode1}  with the assumption that $u$ is locally integrable and has a right limit at $t=0$ in the sense of Definition \ref{def:rightlimit} only. One can check \cite{df02, kst06, diethelm10,liliu16} for more details.
\end{rmk}

Equation \eqref{eq:vol} is called Volterra integral equation, which has been studied extensively.  Analysis of Equation \eqref{eq:ode1} (or equivalently \eqref{eq:vol}) can help us understand the time-delay properties of Caputo derivatives.

\subsection{Existence and uniqueness of solutions to \eqref{eq:ode1}}
In this paper, we will use the following definition of solutions:
\begin{definition}
$u(\cdot)\in L^1(0, T)$ that has a right limit at $t=0$ in the sense of Definition \ref{def:rightlimit} is called a weak solution of \eqref{eq:ode1} if the equation is satisfied in the distribution sense and $u(0+)=u_0$. A weak solution $u$ is called a strong solution if $D_c^{\gamma}u \in L^1(0, T)$ and \eqref{eq:ode1} is satisfied almost everywhere with respect to Lebesgue measure. 
\end{definition}
By the equivalence of \eqref{eq:ode1} and \eqref{eq:vol} established in \cite{liliu16}, all weak solutions of \eqref{eq:ode1} satisfy the integral equation \eqref{eq:vol} almost everywhere with respect to Lebesgue measure.
By modifying the result in \cite[Theorem 6]{liliu16}, we have the following proposition:
\begin{pro}\label{pro:exist}
If $f(u)$ is locally Lipschitz continuous on an interval $(\alpha, \beta)\subset \mathbb{R}$, then $\forall u_0\in (\alpha, \beta)$, there is a unique continuous strong solution with $u(0)=u_0$. Either this solution exists globally on $[0,\infty)$ or there exists $T_b>0$ such that either \[
\liminf_{t\to T_b^-}u(t)=\alpha 
\] 
or 
\[
\limsup_{t\to T_b^-}u(t)=\beta.
\]
\end{pro}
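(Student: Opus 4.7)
The plan is to work with the equivalent Volterra formulation \eqref{eq:vol} and combine a Picard-type local existence argument with a continuation argument adapted to the memory kernel. First I would fix $R>0$ so that $[u_0-R,u_0+R]\subset(\alpha,\beta)$, let $L,M$ denote the Lipschitz constant and the sup of $|f|$ on this closed interval, and define
\[
\mathcal{T}[u](t)=u_0+\frac{1}{\Gamma(\gamma)}\int_0^t(t-s)^{\gamma-1}f(u(s))\,ds
\]
on $X_R=\{u\in C([0,\tau]):\|u-u_0\|_\infty\le R\}$. Direct estimates give $\|\mathcal{T}[u]-u_0\|_\infty\le M\tau^\gamma/\Gamma(\gamma+1)$ and $\|\mathcal{T}[u]-\mathcal{T}[v]\|_\infty\le L\tau^\gamma/\Gamma(\gamma+1)\,\|u-v\|_\infty$, so for sufficiently small $\tau$ the Banach fixed point theorem produces a unique continuous solution of \eqref{eq:vol} on $[0,\tau]$. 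The equivalence established in \cite{liliu16} then upgrades this to a unique strong solution of \eqref{eq:ode1} with $u(0+)=u_0$.

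Next I would let $T_b$ be the supremum of $T$ such that a continuous $(\alpha,\beta)$-valued solution exists on $[0,T)$; uniqueness lets me glue these together into a maximal solution on $[0,T_b)$. Assuming $T_b<\infty$ and that neither boundary condition in the statement holds, the range of $u$ on $[0,T_b)$ lies in some compact subinterval $[\alpha',\beta']\subset(\alpha,\beta)$, so $|f(u(s))|$ is bounded by some $M'$ there. A straightforward Cauchy estimate using the difference
\[
u(t_2)-u(t_1)=\frac{1}{\Gamma(\gamma)}\int_0^{t_2}(t_2-s)^{\gamma-1}f(u(s))\,ds-\frac{1}{\Gamma(\gamma)}\int_0^{t_1}(t_1-s)^{\gamma-1}f(u(s))\,ds
\]
together with the absolute integrability of $(t-s)^{\gamma-1}$ shows that $u$ is uniformly continuous on $[0,T_b)$, so $u_*:=\lim_{t\to T_b^-}u(t)$ exists and lies in $[\alpha',\beta']$.

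The final step is to rule out $T_b<\infty$ in this case by extending the solution past $T_b$. Splitting the integral in \eqref{eq:vol} at $T_b$ rewrites the equation for $t\ge T_b$ as
\[
u(t)=h(t)+\frac{1}{\Gamma(\gamma)}\int_{T_b}^t(t-s)^{\gamma-1}f(u(s))\,ds,\qquad h(t):=u_0+\frac{1}{\Gamma(\gamma)}\int_0^{T_b}(t-s)^{\gamma-1}f(u(s))\,ds,
\]
where $h$ is continuous on $[T_b,T_b+\tau]$ with $h(T_b)=u_*\in(\alpha,\beta)$. A Picard iteration centred on the time-dependent driver $h$, with radius chosen to absorb both the oscillation of $h$ near $T_b$ and the local Lipschitz constant of $f$ in a neighborhood of $u_*$, produces a continuous fixed point on $[T_b,T_b+\tau]$ for $\tau$ small, contradicting the maximality of $T_b$. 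The main technical point is exactly this continuation step: unlike the classical ODE setting, the equation beyond $T_b$ inherits a nonconstant forcing $h(t)$ from the entire history on $[0,T_b]$, so I have to verify that the contraction argument still closes with this time-varying ``initial data'' rather than simply restart at $T_b$ with a fresh constant initial condition. Once this extension is secured, the only remaining alternatives are global existence on $[0,\infty)$ or the boundary behavior asserted in the proposition.
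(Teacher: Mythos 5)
Your proposal is correct. The paper itself omits the proof of this proposition, deferring to Theorem 6 of \cite{liliu16}, and your Picard-plus-continuation argument on the Volterra form \eqref{eq:vol} is exactly the standard route that result rests on: the local contraction estimate, the compactness/uniform H\"older bound showing $u_*=\lim_{t\to T_b^-}u(t)$ exists when neither boundary alternative occurs, and the restarted contraction with the history term $h(t)$ (whose continuity at $T_b$ with $h(T_b)=u_*$ you correctly identify as the key technical point) all close as you describe. The only step you pass over a little quickly is upgrading local uniqueness to uniqueness on the whole maximal interval before "gluing"; this needs either a weighted-norm contraction on $[0,T]$ or the usual argument that the set where two solutions agree is open and closed, both of which are routine here since the same history-term trick lets you rerun the contraction at any interior time.
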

The claim is essentially the same as \cite[Theorem 6]{liliu16}, so we omit the proof.

\begin{definition}
If $u(\cdot)$ exists globally, we set $T_b=\infty$ (See Proposition \ref{pro:exist}). In the case that $\max(|\alpha|, |\beta|)=\infty$ and $\limsup_{t\to T_b^-}|u(t)|=\infty$, we call $T_b$ the blow-up time.
\end{definition}

\section{Some Basic properties of solutions to \eqref{eq:ode1}}\label{sec:basicpro}
\subsection{Regularity and monotonicity of solutions}

In this subsection, we present and prove the regularity and monotonicity results of solutions to \eqref{eq:ode1}. Lemma \ref{lmm:regu} is the result proved in \cite{mf71} for integral equations. This lemma gives the regularity of the solutions to \eqref{eq:ode1} and lays the foundation for our later discussion. Theorem \ref{thm:mon} is the main result in this subsection, which states that the solutions of the autonomous equations are generally monotone. The proof of this theorem relies on Lemma \ref{lmm:pos}, which ensures the positivity of the solutions to a certain class of integral equations. Lemma \ref{lmm:pos} is a slightly different version of \cite[Theorem 1]{weis75}: the author of \cite{weis75} assumed $y$ and $h$ to be continuous at $t=0$ but we cannot assume this for our purpose. However, the idea of the proof is the same.

We present the regularity lemma (\cite[Theorem 1]{mf71}):
\begin{lmm}\label{lmm:regu}
Suppose $f\in C^{1}(\alpha, \beta)$ for some interval $(\alpha, \beta)$ and $f'$ is locally Lipschitz on $(\alpha, \beta)$. Let $u$ be the unique solution to \eqref{eq:ode1} with $u_0\in (\alpha, \beta)$. Then, $u\in C^0[0, T_b)\cap C^{1}(0, T_b)$. Moreover, $y=u'$ satisfies the integral equation
\begin{gather}\label{eq:der}
y(t)=\frac{f(u_0)}{\Gamma(\gamma)}t^{\gamma-1}+\frac{1}{\Gamma(\gamma)}\int_0^t s^{\gamma-1}f'(u(t-s))y(t-s) ds,~\forall t\in (0, T_b).
\end{gather}
  As $t\to 0^+$, $u'(t)=O(t^{\gamma-1})$. If $f(u_0)\neq 0$, $u'(t)\sim \frac{f(u_0)}{\Gamma(\gamma)}t^{\gamma-1}$ as $t\to 0^+$.
\end{lmm}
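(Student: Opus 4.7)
The plan is to work from the Volterra form \eqref{eq:vol}, which by \cite{liliu16} is equivalent to \eqref{eq:ode1} under the standing integrability assumption on $u$. Continuity of $u$ on $[0, T_b)$ is already part of Proposition \ref{pro:exist}, so the real content is to produce $u'$ on $(0, T_b)$ and identify it as the solution of \eqref{eq:der}. After the substitution $\tau = t-s$ in \eqref{eq:vol}, one has $u(t) = u_0 + \frac{1}{\Gamma(\gamma)}\int_0^t \tau^{\gamma-1} f(u(t-\tau))\,d\tau$, and formal differentiation under the integral via Leibniz reproduces exactly \eqref{eq:der}, with the singular forcing $\frac{f(u_0)}{\Gamma(\gamma)}t^{\gamma-1}$ arising as the boundary contribution at $\tau = t$. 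The rest of the proof is to turn this formal step into a rigorous one.

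I would first construct a candidate derivative by decoupling. Fix $T \in (0, T_b)$. Because $u$ is continuous on $[0, T]$ with image in a compact subset of $(\alpha, \beta)$ and $f'$ is locally Lipschitz, the coefficient $a(\tau) := f'(u(\tau))$ is bounded and Lipschitz on $[0, T]$. I would solve the linear weakly singular Volterra equation \eqref{eq:der} by Picard iteration in the weighted space $X_T = \{y \in C((0, T]) : \sup_{t \in (0,T]} t^{1-\gamma}|y(t)| < \infty\}$; convergence is standard because $(\tau^{\gamma-1} * \tau^{\gamma-1})(t) = B(\gamma,\gamma)\,t^{2\gamma-1}$ is one full H\"older order less singular than the forcing. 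The resulting $y$ is continuous on $(0, T]$, satisfies $y(t) = O(t^{\gamma-1})$ as $t \to 0^+$, and satisfies the sharper $y(t) \sim \frac{f(u_0)}{\Gamma(\gamma)}t^{\gamma-1}$ whenever $f(u_0) \neq 0$, since the integral term then scales like $t^{2\gamma-1}$ and is negligible against the forcing.

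To identify this $y$ with $u'$, define $v(t) := u_0 + \int_0^t y(\sigma)\,d\sigma$, which is finite since $y \in L^1_{\mathrm{loc}}$. Substituting the equation for $y$, applying Fubini, and using $\int_\tau^t (\sigma-\tau)^{\gamma-1}\,d\sigma = (t-\tau)^\gamma/\gamma$ yields
\[
v(t) = u_0 + \frac{f(u_0)}{\Gamma(\gamma+1)}t^\gamma + \frac{1}{\Gamma(\gamma+1)}\int_0^t (t-\tau)^\gamma f'(u(\tau)) y(\tau)\,d\tau.
\]
Applying $D_c^\gamma$ and invoking the convolution-group identity $g_{-\gamma} * g_{\gamma+1} = g_1$ from Section \ref{sec:pre}, one gets $D_c^\gamma v = f(u_0) + \int_0^t f'(u(\tau)) y(\tau)\,d\tau$, so by uniqueness of the linear fractional Cauchy problem $D_c^\gamma w = f(u(t))$, $w(0)=u_0$, the conclusion $v \equiv u$ reduces to the identity $\int_0^t f'(u(\tau))y(\tau)\,d\tau = f(u(t)) - f(u_0)$. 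I would prove this by working first on $[\epsilon, t] \subset (0, T]$, where the Volterra kernel restricted to this subinterval is smooth and non-singular in $t$, so classical differentiation of \eqref{eq:vol} gives $u' = y$ pointwise on $[\epsilon, t]$, the chain rule identifies $f'(u)y$ with $(f\circ u)'$, and the fundamental theorem yields $\int_\epsilon^t f'(u)y\,d\tau = f(u(t)) - f(u(\epsilon))$. Passing $\epsilon \to 0^+$ using the weighted bound $t^{1-\gamma}|y(t)| \le C$ and continuity of $u$ at $0$ kills the boundary contribution. Once $v = u$ is established, continuity of $y$ on $(0, T_b)$ upgrades $u$ to $C^1(0, T_b)$, and the asymptotics stated in the lemma are already built into the forcing term of \eqref{eq:der}.

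The main obstacle is precisely this $\epsilon \to 0^+$ identification: since $y$ is generically singular at $0$ like $\tau^{\gamma-1}$, the chain-rule manipulation cannot be carried out at the endpoint, and the global integral identity must be approached through regularization rather than by direct differentiation of a singular object. This regularization step, together with the weighted $O(\tau^{\gamma-1})$ estimate on $y$, is essentially the content of the classical Volterra regularity argument of \cite{mf71} that we invoke here.
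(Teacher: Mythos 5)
Your first half — constructing the candidate $y$ by Picard iteration in the weighted space $\{y: \sup t^{1-\gamma}|y(t)|<\infty\}$ and reading off both asymptotic claims from the forcing term versus the $O(t^{2\gamma-1})$ integral term — is sound and is exactly the first step of the argument the paper sketches (existence and uniqueness of a continuous solution to \eqref{eq:der} on $(0,T)$, via \cite[Lemma 1]{mf71}). The problem is the identification $u'=y$. Your reduction to the identity $\int_0^t f'(u(\tau))y(\tau)\,d\tau=f(u(t))-f(u_0)$ is a clean reformulation, but the argument you give for it is circular: you assert that on $[\epsilon,t]$ ``the Volterra kernel restricted to this subinterval is smooth and non-singular in $t$, so classical differentiation of \eqref{eq:vol} gives $u'=y$ pointwise.'' That is false on two counts. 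First, the singularity of $(t-s)^{\gamma-1}$ sits at the \emph{upper} endpoint $s=t$, which belongs to every subinterval $[\epsilon,t]$; truncating near $s=0$ removes nothing. Second, and more fundamentally, after the substitution $\tau=t-s$ the Leibniz rule applied to $\int_0^{t-\epsilon}\tau^{\gamma-1}f(u(t-\tau))\,d\tau$ produces the term $\int\tau^{\gamma-1}f'(u(t-\tau))\,u'(t-\tau)\,d\tau$, i.e.\ it presupposes that $u'$ exists — which is precisely the conclusion of the lemma. So the hard part of the proof has been assumed, not proved, and the subsequent $\epsilon\to 0^+$ limit is addressing a difficulty (the singularity of $y$ at the origin) that is not where the real obstruction lies.

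The non-circular route, which is what the paper and \cite{mf71} actually do, is to work with the difference quotient $y_h(t)=(u(t+h)-u(t))/h$: from \eqref{eq:vol} one shows $y_h$ satisfies a perturbed version of \eqref{eq:der} (with $\frac{1}{h}\int_t^{t+h}\tau^{\gamma-1}f(u(t+h-\tau))\,d\tau$ in place of $f(u_0)t^{\gamma-1}$ and a difference quotient of $f\circ u$ in place of $f'(u)y$), and then uses the stability of the linear weakly singular Volterra equation in your weighted space to conclude $y_h\to y$ uniformly on compact subsets of $(0,T)$, which simultaneously proves differentiability of $u$ and the identity $u'=y$. Your construction of $y$ and your asymptotic analysis can be kept verbatim; the identification step needs to be replaced by this difference-quotient convergence argument (or an equivalent approximation that does not presuppose $u\in C^1$).
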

For the idea of proof, one may fix $T\in (0, T_b)$ and show that \eqref{eq:der} has a unique continuous solution on $(0, T)$. Then using the equation for $(u(t+h)-u(t))/h$, one can verify that this finite difference converges to the solution of \eqref{eq:der}. One can refer to \cite{mf71} for a detailed discussion. For the last claim, as long as we have $u'=O(t^{\gamma-1})$, we can show that the integral is then dominated by the first term as $t\to 0^+$.

\begin{rmk}
Using the group property $g_{n\gamma}*g_{\gamma}=g_{(n+1)\gamma}$ (Equation \eqref{eq:group}), we may find that the solution to \eqref{eq:ode1} is a power series of $t^{\gamma}$ if $f$ is real analytic. This observation tells us that $t^{\gamma}$ power is intrinsic to the Caputo derivative $D_c^{\gamma}$ and the solution is only $\gamma$-H\"{o}lder continuous at $t=0$, but smooth on $(0, T_b)$.
\end{rmk}

In the following theorem, we will show the sign of $f(u(t))$ does not change:
\begin{thm}\label{thm:behavior}
Let $f$ be locally Lipschitz continuous. Suppose $f(u_0)\neq 0$. Then $f(u(t))f(u_0)\ge 0$, $\forall t\in (0, T_b)$, and the equal sign can only be achieved on a nowhere dense set. Consequently, letting $u_c$ be a critical point for \eqref{eq:ode1} in the sense that $f(u_c)=0$ and $f$ changes signs near $u_c$, then all solution curves for \eqref{eq:ode1} do not cross $u=u_c$.
\end{thm}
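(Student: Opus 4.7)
The plan is to prove the main inequality $f(u(t))\,f(u_0)\geq 0$ by contradiction, working with the Volterra form \eqref{eq:vol} together with the local Lipschitz continuity of $f$ and a fractional Gr\"onwall estimate. By the symmetry $u\mapsto -u,\ f\mapsto \tilde f(v):=-f(-v)$, I assume without loss of generality that $f(u_0)>0$. If $f(u(\cdot))$ took a negative value, I would set
\[
\tau_1 \;=\; \sup\{\,T\in[0,T_b): f(u(t))\geq 0\ \text{for every } t\in[0,T]\,\}.
\]
Continuity of $f\circ u$ forces $\tau_1\in(0,T_b)$ and $f(u(\tau_1))=0$, and by the definition of $\tau_1$ there is a sequence $t_n\downarrow\tau_1$ with $f(u(t_n))<0$. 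Writing $u^{\ast}=u(\tau_1)$, evaluation of \eqref{eq:vol} at $\tau_1$ gives $u^{\ast}-u_0>0$ (the integrand is nonnegative on $[0,\tau_1]$ and strictly positive near $s=0$). The intermediate value theorem applied to $u|_{[0,\tau_1]}$ shows $[u_0,u^{\ast}]$ lies in the range of $u$, hence $f\geq 0$ on $[u_0,u^{\ast}]$; combined with $u(t_n)\to u^{\ast}$, this forces $u(t_n)>u^{\ast}$ for all large $n$.

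The heart of the argument is to decompose $u(t)-u^{\ast}=J(t)+R(t)$ for $t>\tau_1$, with
\[
J(t) \;=\; \frac{1}{\Gamma(\gamma)}\!\int_0^{\tau_1}\!\bigl[(t-s)^{\gamma-1}-(\tau_1-s)^{\gamma-1}\bigr]f(u(s))\,ds,\qquad R(t) \;=\; \frac{1}{\Gamma(\gamma)}\!\int_{\tau_1}^{t}\!(t-s)^{\gamma-1}f(u(s))\,ds.
\]
Because $0<\gamma<1$ we have $(t-s)^{\gamma-1}<(\tau_1-s)^{\gamma-1}$ for $s<\tau_1<t$, while $f(u(s))\geq 0$ with strict positivity on a subinterval, so $J(t)<0$ strictly; this is the ``memory-forces-regression'' feature of the Caputo derivative. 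Restricting to $s\in[0,\delta]$ where $f(u(s))\geq f(u_0)/2>0$ and applying the mean value theorem to $r\mapsto r^{\gamma-1}$ yields a quantitative lower bound $|J(\tau_1+h)|\geq c\,h$ for some $c>0$ and small $h>0$; also $|J(t)|$ is non-decreasing.

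Next, the local Lipschitz property of $f$ at $u^{\ast}$ with $f(u^{\ast})=0$ gives $|f(u(s))|\leq L|u(s)-u^{\ast}|$ for $s\in[\tau_1,t]$ when $t-\tau_1$ is small, turning $|u(t)-u^{\ast}|\leq|J(t)|+|R(t)|$ into the fractional Gr\"onwall inequality
\[
|u(t)-u^{\ast}| \;\leq\; |J(t)| + \frac{L}{\Gamma(\gamma)}\int_{\tau_1}^{t}(t-s)^{\gamma-1}|u(s)-u^{\ast}|\,ds.
\]
A standard Mittag--Leffler resolvent bound together with monotonicity of $|J|$ then yields $|u(s)-u^{\ast}|\leq 2|J(t)|$ uniformly on $s\in[\tau_1,t]$ for a sufficiently small right neighborhood of $\tau_1$. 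Consequently
\[
|R(t)| \;\leq\; \frac{2L\,|J(t)|\,(t-\tau_1)^{\gamma}}{\Gamma(\gamma+1)} \;=\; o(|J(t)|)\quad\text{as }t\to\tau_1^{+},
\]
so $u(t)-u^{\ast}=J(t)+R(t)\leq \tfrac{1}{2}J(t)<0$ on a right neighborhood of $\tau_1$, contradicting $u(t_n)>u^{\ast}$ for $t_n\downarrow\tau_1$. The most delicate step will be establishing the Gr\"onwall bound uniformly over that right neighborhood and matching the orders of $|J(t)|$ against $(t-\tau_1)^{\gamma}$; this is where the low regularity (only $f$ Lipschitz, not $C^1$) is felt.

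For the nowhere-dense statement, if $\{t:f(u(t))=0\}$ contained an open interval $(a,b)$, then on $(a,b)$ the Volterra form reduces to $u(t)=u_0+\frac{1}{\Gamma(\gamma)}\int_0^{a}(t-s)^{\gamma-1}f(u(s))\,ds$, which is $C^{\infty}$ in $t$. Differentiating and invoking injectivity of the Riemann--Liouville integral on continuous functions (Titchmarsh's convolution theorem) would force $f(u(s))\equiv 0$ on $[0,a]$, contradicting $f(u_0)\neq 0$. Finally, the corollary on sign-change critical points follows immediately from Part 1: if a solution curve crossed $u_c$, then $f\circ u$ would take strictly opposite signs on either side of the crossing time, violating $f(u(t))\,f(u_0)\geq 0$.
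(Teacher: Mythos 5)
Your proof of the main inequality $f(u(t))f(u_0)\ge 0$ is correct but takes a genuinely different route from the paper's. You stop at the \emph{first zero} $\tau_1$ of $f\circ u$, where the sign of $f(u(s))$ for $s>\tau_1$ is not yet controlled, so you must estimate the local term $R(t)$ via $|f(u(s))|\le L|u(s)-u^{\ast}|$ and a singular Gr\"{o}nwall inequality, concluding $|R(t)|\le C|J(t)|(t-\tau_1)^{\gamma}=o(|J(t)|)$. This works, and you are right that it is the delicate step: the sharper form of the Gr\"{o}nwall lemma with the \emph{local} nondecreasing forcing $a(t)=|J(t)|$ is essential, since the crude bound by $\sup|J|$ over the whole right neighborhood would not beat $|J(t)|$ as $t\to\tau_1^{+}$. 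The paper sidesteps all of this with a better stopping time, $t^{*}=\inf\{t:\exists\delta>0,\ f(u(s))\le 0\ \forall s\in[t,t+\delta]\}$: with that choice the local term $\int_{t^{*}}^{t}(t-s)^{\gamma-1}f(u(s))\,ds$ is automatically nonpositive, so $u(t)<u(t^{*})$ follows from the memory term $J$ alone, with no Lipschitz constant or Gr\"{o}nwall lemma; the contradiction is then reached by an intermediate-value argument showing that $u$ revisits on $(t^{*},t^{*}+\delta]$ values already attained on $[0,t^{*}]$, producing an interval $[t_1,t_2]\subset[0,t^{*}]$ with $f(u)\le 0$ that violates the infimum. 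Your route is valid for the sign statement but costs an extra analytic lemma without buying anything.

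The nowhere-dense clause is where your proposal has a genuine gap. Knowing $f(u(t))=0$ on $(a,b)$ only constrains the \emph{values} $u(t)$ to lie in the zero set of $f$; it tells you nothing about the value of $\int_0^{a}(t-s)^{\gamma-1}f(u(s))\,ds$, so there is no vanishing convolution to which Titchmarsh's theorem or injectivity of the Riemann--Liouville integral can be applied, and the asserted conclusion ``$f(u(s))\equiv 0$ on $[0,a]$'' does not follow. What one actually gets on $(a,b)$ is $u'(t)=\frac{\gamma-1}{\Gamma(\gamma)}\int_0^{a}(t-s)^{\gamma-2}f(u(s))\,ds<0$ (using $f(u)\ge 0$ on $[0,a]$ and $f(u)>0$ near $s=0$), so $u$ strictly decreases and revisits earlier values; converting that into a contradiction is precisely what the paper's $t^{*}$ is built for, because its definition already covers intervals on which $f(u)\le 0$ (in particular $\equiv 0$), so the nowhere-density comes for free from the same argument. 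Your stopping time $\tau_1$ (first zero rather than first interval of nonpositivity) does not yield this part, and the Titchmarsh substitute does not repair it; you would need to rerun your contradiction scheme starting from the infimum of left endpoints of intervals on which $f(u)$ vanishes identically.
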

\begin{proof}
Without loss of generality, we assume $f(u_0)>0$. Define 
\[
t^*=\inf \Big\{ t>0: \exists \delta >0,~ s.t.~f(u(s))\leq 0, ~\forall s \in [t,t+\delta] \Big\}.
\]
First of all, we have that $t^*>0$ since $f(u_0)>0$.
To prove the theorem, we only need to show $t^*=\infty$.

We argue by contradiction. Suppose $t^*<\infty$, then we can find a $\delta >0$, s.t. 
$f(u(t))\leq 0$ for $\forall t \in [t^*,t^*+\delta]$. 
 
We claim that $u(t) < u(t^*)$ for $ \forall t\in (t^*,t^*+\delta) $.
\begin{multline*}
u(t)-u(t^*)=\frac{1}{\Gamma(\gamma)}\left(\int_0^{t}(t-s)^{\gamma-1}f(u(s))ds-\int_0^{t^*}(t^*-s)^{\gamma-1}f(u(s))ds\right)\\
=\frac{1}{\Gamma(\gamma)}\left(\int_0^{t^*}\left((t-s)^{\gamma-1}-(t^*-s)^{\gamma-1}\right) f(u(s))ds+\int_{t^*}^{t}(t-s)^{\gamma-1}f(u(s))ds\right)
\end{multline*}

Notice that $f(u (t))\geq 0$ for $t\in(0,t^*)$,  and $f(u(t))$ is strictly positive when $t$ is close to 0. In addition, $f(u(t))\leq0$ for $t\in [t^*,t^*+\delta]$. Therefore, the right hand side is strictly negative. Hence $u(t) < u(t^*)$ and the claim is proved.

 By the continuity of $u$, we conclude that there exist $t_1, t_2$, $0\le t_1<t_2\leq t^*$, such that for any $s \in [t_1,t_2]$, there exists a $t_s\in [t^*,t^*+\delta], u(s)=u(t_s)$. Then for any $s \in [t_1,t_2]$, $f(u(s))=f(u(t_s))\leq 0$, which contradicts with the definition of $t^*$. 
\end{proof}

For ordinary derivative, as long as we have shown that $f(u(t))$ has a definite sign,  we have that the solution is monotone. For fractional derivatives, this is not obvious, however we can also show that this is true provided $f$ is close to $C^2$. More precisely, we have:

\begin{thm}\label{thm:mon}
Suppose $f\in C^{1}(\alpha, \beta)$ for some interval $(\alpha, \beta)$ and $f'$ is locally Lipschitz on $(\alpha, \beta)$. Then, the solution $u$ to \eqref{eq:ode1} with $u(0)=u_0\in (\alpha, \beta)$ is monotone on  the interval of existence $(0, T_b)$, where $T_b$ is given by Proposition \ref{pro:exist}. If $f(u_0)\neq 0$, the monotonicity is strict.
\end{thm}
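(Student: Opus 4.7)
\emph{Proof plan.} The strategy is to reduce the monotonicity claim to a positivity statement for the linear Volterra equation satisfied by $u'$ and then invoke Lemma \ref{lmm:pos}. By Lemma \ref{lmm:regu}, the hypotheses on $f$ guarantee $u\in C^0[0, T_b)\cap C^1(0, T_b)$, and $y:=u'$ solves the linear integral equation \eqref{eq:der}. A change of variable $s\mapsto t-s$ inside the integral recasts \eqref{eq:der} as
$$y(t) + \int_0^t (t-s)^{\gamma-1} V(s)\, y(s)\, ds \;=\; c\, t^{\gamma-1},$$
with $V(s):=-f'(u(s))/\Gamma(\gamma)$ and $c:=f(u_0)/\Gamma(\gamma)$, which is exactly the form governed by Lemma \ref{lmm:pos}.

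Suppose first that $f(u_0)\neq 0$. Fix any $T\in(0, T_b)$. Because $u\in C^0[0,T]$ and $f'$ is continuous on $(\alpha,\beta)$, the coefficient $V$ is continuous on $[0,T]$. If $f(u_0)>0$, then $c>0$, and Lemma \ref{lmm:pos} directly gives $y>0$ on $(0,T]$. If $f(u_0)<0$, I would apply Lemma \ref{lmm:pos} to $-y$, which satisfies the analogous equation with forcing $-c>0$, to conclude $y<0$ on $(0,T]$. Since $T<T_b$ was arbitrary, $u'$ keeps a definite strict sign throughout $(0, T_b)$, which yields strict monotonicity.

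If instead $f(u_0)=0$, then the constant function $\tilde u(t)\equiv u_0$ makes the integrand of \eqref{eq:vol} vanish identically and hence solves that Volterra equation with initial value $u_0$. By the uniqueness clause of Proposition \ref{pro:exist}, $u\equiv u_0$ on $[0, T_b)$, which is monotone in the weak sense and exhausts this case.

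The substantive work is entirely inside Lemma \ref{lmm:pos}: since $V$ need not have a definite sign, positivity of $y$ is nontrivial and requires resolving out the fractional kernel $\lambda t^{\gamma-1}$ as outlined in the introduction. Once that lemma is granted, the remaining obstacles here are purely bookkeeping, namely matching \eqref{eq:der} to the template of Lemma \ref{lmm:pos}, checking that $V$ is continuous up to $t=0$ (which is ensured by $u\in C^0[0, T_b)$), and exhausting $(0, T_b)$ by compact subintervals $[0,T]$.
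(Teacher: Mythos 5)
Your proposal is correct and follows essentially the same route as the paper's own proof: reduce to the linear Volterra equation \eqref{eq:der} for $y=u'$, match it to the template of Lemma \ref{lmm:pos} on compact subintervals $[0,T]\subset[0,T_b)$, treat $f(u_0)<0$ by applying the lemma to $-y$, and dispose of the case $f(u_0)=0$ via uniqueness of the constant solution. No gaps.
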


Before proving this theorem, let us prove a useful lemma that ensures the positivity of the solution to an integral equation, which is a slightly different version of \cite[Theorem 1]{weis75} (for more discussions on  positivity of solutions to Volterra equations, see \cite{cn79, weis75}) :
\begin{lmm}\label{lmm:pos}
Let $T>0$. Assume $h\in L^1[0,T]$, $h>0~a.e.$, satisfying \[
h(t)-\int_0^tr_{\lambda}(t-s) h(s)ds>0, ~a.e. \forall \lambda>0.
\]
Here $r_{\lambda}$ is the resolvent for kernel $\lambda t^{\gamma-1}$ satisfying 
\begin{gather}\label{eq:resolvent}
r_{\lambda}(t)+\lambda\int_0^t(t-s)^{\gamma-1}r_{\lambda}(s)ds=\lambda t^{\gamma-1}.
\end{gather}
Suppose $v\in C^0[0,T]$, then the integral equation 
\begin{gather}\label{eq:varcoevol}
y(t)+\int_0^t(t-s)^{\gamma-1}v(s) y(s)ds=h(t)
\end{gather}
has a unique solution  $y(t)\in L^1[0, T]$. Further, $
y(t)>0, a.e..
$
In particular, if $h(t)=\alpha t^{\gamma-1}$ for $\alpha>0$, then $y>0$ a.e..
\end{lmm}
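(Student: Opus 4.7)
The plan is to separate existence/uniqueness from positivity and follow the resolvent-based transformation sketched in the introduction. For existence and uniqueness in $L^1[0,T]$: since $v$ is continuous on $[0,T]$ it is bounded, and for the weighted norm $\|y\|_\mu := \int_0^T e^{-\mu t}|y(t)|\,dt$ with $\mu$ sufficiently large, the operator $y \mapsto \int_0^t (t-s)^{\gamma-1} v(s) y(s)\,ds$ is a strict contraction (using the elementary estimate $\|t^{\gamma-1} * y\|_\mu \le \Gamma(\gamma)\mu^{-\gamma}\|y\|_\mu$), so Banach's fixed point theorem yields a unique $y \in L^1[0,T]$ for any $h \in L^1[0,T]$.

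For positivity, fix $\lambda > \sup_{[0,T]} |v|$ and split $v(s) = \lambda - \tilde v(s)$ with $\tilde v(s) := \lambda - v(s) \ge 0$. Then \eqref{eq:varcoevol} rewrites as
\[
y(t) + \lambda \int_0^t (t-s)^{\gamma-1} y(s)\,ds = h(t) + \int_0^t (t-s)^{\gamma-1} \tilde v(s) y(s)\,ds.
\]
The operator $f \mapsto f + \lambda(t^{\gamma-1} * f)$ is inverted by $f \mapsto f - r_\lambda * f$: expanding $(I+\lambda t^{\gamma-1}*)(I - r_\lambda*)$ shows that this is equivalent to the identity \eqref{eq:resolvent}. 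Combining this with the consequence $t^{\gamma-1} * r_\lambda = t^{\gamma-1} - r_\lambda/\lambda$ of \eqref{eq:resolvent}, and applying the inverse to both sides of the rewritten equation, one obtains the alternate integral equation
\[
y(t) = H(t) + \frac{1}{\lambda}\int_0^t r_\lambda(t-s)\tilde v(s) y(s)\,ds, \qquad H(t) := h(t) - \int_0^t r_\lambda(t-s) h(s)\,ds,
\]
with $H > 0$ a.e.\ by hypothesis.

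I next need the positivity of $r_\lambda$. Laplace inversion of \eqref{eq:resolvent} gives the explicit formula $r_\lambda(t) = \lambda\Gamma(\gamma) t^{\gamma-1} E_{\gamma,\gamma}(-\lambda\Gamma(\gamma) t^\gamma)$, and $E_{\gamma,\gamma}(-x) > 0$ for $x \ge 0$ when $0 < \gamma < 1$ by the classical complete monotonicity of $E_{\gamma,\gamma}(-\,\cdot\,)$; hence $r_\lambda(t) > 0$ for $t > 0$. Consequently, the kernel $\frac{1}{\lambda}r_\lambda(t-s)\tilde v(s)$ in the alternate equation is nonnegative. Running the Picard iteration $y_0 := H$, $y_{n+1}(t) := H(t) + \frac{1}{\lambda}\int_0^t r_\lambda(t-s)\tilde v(s) y_n(s)\,ds$ keeps every iterate nonnegative by induction, and the contraction argument above forces convergence in $L^1$ to the unique solution $y$, so $y \ge 0$ a.e. Plugging this back into the alternate equation yields $y \ge H > 0$ a.e.

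For the final assertion with $h(t) = \alpha t^{\gamma-1}$, I only need to verify the positivity hypothesis on $H$ is automatic: using $t^{\gamma-1} * r_\lambda = t^{\gamma-1} - r_\lambda/\lambda$,
\[
H(t) = \alpha t^{\gamma-1} - \alpha(r_\lambda * t^{\gamma-1})(t) = \alpha t^{\gamma-1} - \alpha\Big(t^{\gamma-1} - \frac{r_\lambda(t)}{\lambda}\Big) = \frac{\alpha}{\lambda} r_\lambda(t),
\]
which is strictly positive on $(0,T]$, so the general argument applies. The main obstacle is establishing $r_\lambda > 0$; once that is granted, the rest is routine algebra with the resolvent identity together with a Picard monotonicity argument, and the whole proof essentially amounts to uncovering the positive-kernel structure hidden inside \eqref{eq:varcoevol}.
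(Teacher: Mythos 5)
Your proposal is correct and follows essentially the same route as the paper: transform \eqref{eq:varcoevol} via the resolvent $r_\lambda$ with $\lambda>\sup|v|$ into the equation $y=H+\int_0^t r_\lambda(t-s)(1-v(s)/\lambda)y(s)\,ds$ with positive data and nonnegative kernel, use positivity of $r_\lambda$ from complete monotonicity of the Mittag--Leffler function, and compute $H=\tfrac{\alpha}{\lambda}r_\lambda$ in the special case. The only differences are cosmetic: you supply self-contained weighted-norm contraction and Picard-iteration arguments where the paper cites Miller--Feldstein, and you express $r_\lambda$ via $E_{\gamma,\gamma}$ rather than as $-\tfrac{d}{dt}E_\gamma(-\lambda\Gamma(\gamma)t^\gamma)$, which is the same function.
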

\begin{proof}
It can be computed explicitly that \[
r_{\lambda}(t)=-\frac{d}{dt}E_{\gamma}(-\lambda\Gamma(\gamma)t^{\gamma}),
\]
where \[
E_{\gamma}(z)=\sum_{n=0}^{\infty}\frac{z^n}{\Gamma(n\gamma+1)}
\] is the Mittag-Leffler function \cite{mg00,hms11}. $E_{\gamma}(-\lambda\Gamma(\gamma)t^{\gamma})$ is completely monotone that goes from $1$ to $0$ on $(0,\infty)$ \cite{mg00}.  For the concept of completely monotone, see \cite{widder41}.  As a result, $r_{\lambda}\in L^1(0,T)\cap C^0(0,T]$ and $r_{\lambda}>0$. Then, by the fact that the convolution of two locally integrable functions is again locally integrable, all the convolutions are well-defined. (Actually, by an abstract argument, it has also been shown in \cite[Lemma 2.1]{cn79} that $r_{\lambda}$ is  completely monotone and thus non-negative.)

The existence and uniqueness of \eqref{eq:varcoevol} are shown in \cite[Lemma 1]{mf71}. We now prove $y>0,~a.e.$. 

As $v\in C^0[0, T]$, there exists $M>0$ such that $|v|\le M$ on $[0, T]$.  Convolving Equation \eqref{eq:varcoevol} with $r_{\lambda}$, we have
\begin{gather}\label{eq:convtmp}
\int_0^tr_{\lambda}(t-s)y(s)ds+\int_0^t\int_0^{t-s}\lambda(t-s-\tau)^{\gamma-1}r_{\lambda}(\tau)d\tau \frac{v(s)}{\lambda} y(s)ds=\int_0^tr_{\lambda}(t-s)h(s)ds.
\end{gather}
Taking the difference between \eqref{eq:varcoevol} and \eqref{eq:convtmp}, 
\begin{gather*}
y(t)-\int_0^tr_{\lambda}(t-s)y(s)ds+\int_0^tr_{\lambda}(t-s)\frac{v}{\lambda}y(s)ds=h(t)-\int_0^tr_{\lambda}(t-s)h(s)ds
\end{gather*}
where we used the fact $\lambda (t-s)^{\gamma-1}-\lambda\int_0^{t-s}(t-s-\tau)^{\gamma-1}r_{\lambda}(\tau)d\tau=r_{\lambda}(t-s)$ from \eqref{eq:resolvent}.

As a result, $y$ also solves the integral equation
\begin{gather}\label{eq:alterinteq}
y(t)=\left(h(t)-\int_0^tr_{\lambda}(t-s)h(s)ds\right)
+\int_0^t r_{\lambda}(t-s)\left(1-\frac{v(s)}{\lambda}\right) y(s)ds.
\end{gather}
$h(t)-\int_0^tr_{\lambda}(t-s)h(s)ds>0$. Picking $\lambda>M$, $1-\frac{v}{\lambda}>0$ and then $y\ge 0~a.e.$ on $[0, T)$ follows from \cite[Lemma 1]{mf71}. Now from the assumption of this lemma, we have $h(t)-\int_0^tr_{\lambda}(t-s)h(s)ds>0$, in addition we also have $r_{\lambda}>0$ and $1-\frac{v}{\lambda}>0$. As a result, $y>0$ a.e. from \eqref{eq:alterinteq}.

Lastly, if $h(t)=\alpha t^{\gamma-1}$, then \[
h(t)-\int_0^tr_{\lambda}(t-s)h(s)ds=\frac{\alpha}{\lambda} r_{\lambda}(t)>0.
\]
The last claim is proved.
\end{proof}

\begin{rmk}
In the proof of Theorem 1 of \cite{weis75}, the author assumed $h$ to be continuous and the solution to be continuous at $t=0$. In Lemma \ref{lmm:pos}, we do not assume $y$ to be continuous, which is crucial in the case that $h(t)=\alpha t^{\gamma-1}$. 
\end{rmk}

Now, we are able to prove Theorem \ref{thm:mon}:

\begin{proof}[Proof of Theorem \ref{thm:mon}]
Clearly, if $f(u_0)=0$, then $u=u_0$ is the solution by the uniqueness. This is trivially monotone.

Now, we assume $f(u_0)>0$. 
By Lemma \ref{lmm:regu}, $u\in C^1(0,T_b)\cap C^0[0,T_b)$. Now, we fix $T\in (0, T_b)$. The derivative $y=u'$ satisfies the equation \[
y(t)=\frac{f(u_0)}{\Gamma(\gamma)}t^{\gamma-1}+\frac{1}{\Gamma(\gamma)}\int_0^t (t-s)^{\gamma-1}f'(u(s))y(s) ds,~t\in (0, T).
\]
Since $f'(u(t))$ is continuous on $[0, T]$ and $f(u_0)>0$, applying
Lemma \ref{lmm:pos}, we find $y$ is positive on $(0, T)$.
Since $T$ is arbitrary, $y>0$ on $(0, T_b)$. As a result, $u$ is increasing.

If $f(u_0)<0$, we simply consider the equation for $y=-u'$. The argument is similar.
\end{proof}

For usual ODEs, the solution curves do not intersect. For fractional ODEs, we can conclude directly from the integral equation \eqref{eq:vol} that
\begin{pro}\label{pro:nointersect}
If $f(u)$ is locally Lipschitz continuous, non-decreasing, then the solution curves of \eqref{eq:ode1} do not intersect with each other.
\end{pro}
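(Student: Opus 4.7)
The plan is to work directly with the Volterra integral form \eqref{eq:vol} and compare two solution curves. Take two solutions $u_1,u_2$ of \eqref{eq:ode1} with initial values $u_{10}$ and $u_{20}$, defined on a common interval $[0,T)$ (for instance, the intersection of their maximal intervals). Without loss of generality I may assume $u_{10}<u_{20}$; the case $u_{10}=u_{20}$ is just uniqueness, and the opposite strict inequality is symmetric. By Proposition \ref{pro:exist}, both $u_1$ and $u_2$ are continuous on $[0,T)$, so $w(t):=u_2(t)-u_1(t)$ is continuous with $w(0)>0$.

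Next I would argue by contradiction and consider
\[
t^{*}=\inf\{\,t\in(0,T):\;u_1(t)=u_2(t)\,\}.
\]
Suppose, toward a contradiction, that $t^{*}<T$. Continuity of $w$ together with $w(0)>0$ gives $t^{*}>0$, and continuity at $t^{*}$ yields $u_1(t^{*})=u_2(t^{*})$ while $u_1(s)\le u_2(s)$ for all $s\in[0,t^{*}]$. Since $f$ is non-decreasing, this implies $f(u_1(s))\le f(u_2(s))$ throughout $[0,t^{*}]$.

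Subtracting the Volterra identity \eqref{eq:vol} for $u_1$ from the one for $u_2$ at $t=t^{*}$ gives
\[
u_2(t^{*})-u_1(t^{*})=(u_{20}-u_{10})+\frac{1}{\Gamma(\gamma)}\int_{0}^{t^{*}}(t^{*}-s)^{\gamma-1}\bigl(f(u_2(s))-f(u_1(s))\bigr)\,ds.
\]
The first term is strictly positive, the kernel $(t^{*}-s)^{\gamma-1}$ is positive on $(0,t^{*})$, and the integrand is non-negative by the monotonicity of $f$. Hence the right-hand side is strictly positive, contradicting $u_2(t^{*})-u_1(t^{*})=0$. Therefore no such $t^{*}$ exists, and $u_1(t)<u_2(t)$ on all of $[0,T)$, which is exactly the non-intersection claim. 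There is no substantive obstacle here: the argument is essentially a first-crossing contradiction, and the only thing one must be slightly careful about is making sure continuity of both solutions (needed for the infimum to be attained as an equality) is invoked from Proposition \ref{pro:exist} rather than from the stronger regularity hypothesis of Lemma \ref{lmm:regu}, since $f$ is only assumed locally Lipschitz.
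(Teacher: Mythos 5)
Your argument is correct, and it is precisely the "conclude directly from the integral equation \eqref{eq:vol}" reasoning that the paper alludes to (the paper in fact omits the proof entirely): a first-crossing contradiction using the positivity of the kernel, the monotonicity of $f$, and the strict gap $u_{20}-u_{10}>0$ in the subtracted Volterra identities. No gaps; your care in taking continuity from Proposition \ref{pro:exist} rather than Lemma \ref{lmm:regu} is appropriate given that $f$ is only locally Lipschitz here.
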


\begin{rmk}\label{rmk:nonintersect}
In the case that $f(u)$ is non-decreasing only on some interval, then as long as one can show that the solutions stay in this interval, then the curves with initial value in this interval does not intersect. For general $f$, it is unclear whether or not the solution curves intersect. The memory is playing a tricky role. 
\end{rmk}

\subsection{Blow-up criterion}

Now we present some results regarding the blow-up behavior. We first have the following observation
\begin{lmm}\label{lmm:toinfty}
Suppose $f(u)$ is locally Lipschitz, non-decreasing on $(0, \infty)$, $u_0>0$ and $f(u_0)>0$. Then, the solution to Equation \eqref{eq:ode1} is non-decreasing on $(0, T_b)$ and $\lim_{t\to T_b^-}u(t)=+\infty$ where $T_b\in (0, \infty]$ is given by Proposition \ref{pro:exist}.
\end{lmm}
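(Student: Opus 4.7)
I would show three things in order: (i) $u(t)\ge u_0>0$ throughout $[0,T_b)$, (ii) $u$ is non-decreasing on $(0,T_b)$, and (iii) $\lim_{t\to T_b^-}u(t)=+\infty$. For (i), since $f(u_0)>0$ and $f$ is locally Lipschitz, Theorem \ref{thm:behavior} gives $f(u(t))\ge 0$ on $(0,T_b)$; the positivity of the kernel $(t-s)^{\gamma-1}$ in the Volterra equation \eqref{eq:vol} then forces $u(t)\ge u_0>0$ for every $t\in[0,T_b)$, and monotonicity of $f$ on $(0,\infty)$ upgrades this to $f(u(t))\ge f(u_0)>0$.

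For (ii), I cannot invoke Theorem \ref{thm:mon} directly since $f$ here is only assumed locally Lipschitz, not $C^1$ with $f'$ locally Lipschitz. Instead I would run a Picard iteration that preserves non-decreasingness. Fix $T\in(0,T_b)$, set $v_0\equiv u_0$, and define $v_{n+1}(t)=u_0+\frac{1}{\Gamma(\gamma)}\int_0^t(t-s)^{\gamma-1}f(v_n(s))\,ds$ on $[0,T]$. A standard weighted sup-norm contraction, using local Lipschitzness of $f$ on a bounded set containing the range of $u$ on $[0,T]$ (which is bounded by Lemma \ref{lmm:regu}), yields $v_n\to u$ uniformly on $[0,T]$. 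By induction each $v_n$ is non-decreasing: $v_0$ is constant, and if $v_n\ge u_0$ is non-decreasing then $f\circ v_n$ is non-decreasing and non-negative (since $f$ is non-decreasing on $(0,\infty)$), and the Riemann--Liouville integral of a non-decreasing non-negative function is non-decreasing, as one checks directly after writing $\int_0^t(t-s)^{\gamma-1}g(s)\,ds=\int_0^t\tau^{\gamma-1}g(t-\tau)\,d\tau$ and comparing two times $t_1<t_2$. A uniform limit of non-decreasing functions is non-decreasing, so $u$ is non-decreasing on $[0,T]$, and since $T$ was arbitrary in $(0,T_b)$, on all of $(0,T_b)$. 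An alternative route is to mollify $f$ into a non-decreasing $C^{\infty}$ sequence $f_{\epsilon}$, apply Theorem \ref{thm:mon} to the corresponding $u_{\epsilon}$, and pass to the limit via continuous dependence on the nonlinearity.

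For (iii), if $T_b=\infty$, substituting $f(u(t))\ge f(u_0)$ into \eqref{eq:vol} gives the lower bound $u(t)\ge u_0+\frac{f(u_0)}{\Gamma(\gamma+1)}t^{\gamma}\to\infty$ as $t\to\infty$. If $T_b<\infty$, Proposition \ref{pro:exist} (with $\alpha=0$, $\beta=\infty$) forces either $\liminf_{t\to T_b^-}u(t)=0$ or $\limsup_{t\to T_b^-}u(t)=+\infty$; the former is excluded by $u\ge u_0>0$, and the latter combined with the monotonicity from (ii) upgrades $\limsup$ to $\lim$, yielding $\lim_{t\to T_b^-}u(t)=+\infty$. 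The main obstacle is (ii): a direct estimate of $u(t_2)-u(t_1)$ from \eqref{eq:vol} is inconclusive, since the kernel difference $(t_2-s)^{\gamma-1}-(t_1-s)^{\gamma-1}$ is negative for $s<t_1$ while the tail piece on $[t_1,t_2]$ is positive, so non-decreasingness must be obtained from an inductive construction rather than read off \eqref{eq:vol} in one shot.
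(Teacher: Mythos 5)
Your proposal is correct and follows the same core strategy as the paper: iterate $v_{n+1}(t)=u_0+\frac{1}{\Gamma(\gamma)}\int_0^t(t-s)^{\gamma-1}f(v_n(s))\,ds$ starting from the constant $v_0\equiv u_0$, show by the change-of-variables computation that each iterate is non-decreasing in $t$, and pass to the limit; part (iii) is also the paper's dichotomy. The two writeups differ in two places. First, you get $u\ge u_0$ and $f(u(t))\ge f(u_0)>0$ by citing Theorem \ref{thm:behavior}, whereas the paper reruns a self-contained $t^*$ argument inside the proof; your shortcut is legitimate and cleaner. Second, and more substantively, you identify $\lim_n v_n$ with $u$ via a weighted sup-norm contraction, while the paper shows the iterates are increasing in $n$ and bounded above by $u$ itself (this is what its $u\ge u^0$ step is really for), then uses monotone convergence plus uniqueness. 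Your contraction step needs one more word: local Lipschitzness of $f$ on a set containing the range of $u$ on $[0,T]$ is not quite enough, since a priori the iterates $v_n$ need not stay in that set. Standard repairs are either to truncate $f$ outside $[0,2\max_{[0,T]}u]$ to a globally Lipschitz, still non-negative and non-decreasing function (the truncated iteration converges and its limit solves the original equation because $u$ does), or to prove $v_n\le u$ by induction as the paper does, which confines the iterates for free. With that small repair, or with your alternative mollification route, the argument is complete.
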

\begin{proof}
First of all, let us show that the solution $u$ is non-decreasing on $(0, T_b)$. Note that the $f$ in this lemma is less regular than the function in Theorem \ref{thm:mon}, hence we cannot use Theorem \ref{thm:mon} directly. To show the monotonicity of $u$, let us consider the following sequence of functions $\{u^n\}_{n=0}^{\infty}$:
\[
u^0=u_0, \quad D_c^{\gamma}u^n=f(u^{n-1}),~u^n(0)=u_0,~n\ge 1.
\]
From the integral form of the fractional derivative \eqref{eq:vol}, it is clear that $u^n$ is continuous on $[0,\infty)$. Since $f(u^0)> 0$, we have 
\[
u^1(t)=u_0+\frac{1}{\Gamma(\gamma)}\int_0^t(t-s)^{\gamma-1}f(u^0)ds \ge u_0=u^0(t)
\]
 for $t\in [0,\infty)$. Consequently, $f(u^1(t))\ge f(u^0(t))$ and hence 
 \[
 u^2(t)=u_0+\frac{1}{\Gamma(\gamma)}\int_0^t(t-s)^{\gamma-1}f(u^1(s))ds\ge u_0+\frac{1}{\Gamma(\gamma)}\int_0^t(t-s)^{\gamma-1}f(u^0(s))ds=u^1(t).
 \]
By induction, $u^n(t)\ge u^{n-1}(t)$ for all $n\ge 1$. 

Next, we claim that $u(t)>u^0$ for $t\in (0, T_b)$. For this purpose, we define $t^*=\sup\{\bar{t}\in (0, T_b): f(u(t))>0,~ \forall t\in(0, \bar{t})\}$. We show that $t^*=T_b$. First of all, according to the continuity of $u(t)$ and $f(u)$, and the fact $f(u_0)>0$, we have $t^*>0$. If $t^*<T_b$, then $f(u(t^*))=0$ by the continuity of $f$ and $u$. In addition, by the definition of $t^*$: 
\[
u(t^*)=u^0+\frac{1}{\Gamma(\gamma)}\int_0^{t^*}(t^*-s)^{\gamma-1}f(u(s))ds>u^0.
\]
Since $f$ is non-decreasing, we have $f(u(t^*))\ge f(u^0)>0$, which is a contradiction.

Using $u(t)\ge u^0$, we find 
\[
u(t)=u_0+\frac{1}{\Gamma(\gamma)}\int_0^t(t-s)^{\gamma-1}f(u(s))ds \ge u_0+\frac{1}{\Gamma(\gamma)}\int_0^t(t-s)^{\gamma-1}f(u^0) ds=u^1(t)
\]
 for $t\in [0, T_b)$. Again by induction, $u(t)\ge u^2(t)$ and $u(t)\ge u^3(t)$, etc. Moreover, since $f$ is non-decreasing and $f(u^n)$ is positive, we find that for any $0\le t_1<t_2<\infty$:
 \begin{multline*}
 u^1(t_2)=u_0+\frac{1}{\Gamma(\gamma)}\int_0^{t_2}(t_2-s)^{\gamma-1}f(u^0)ds
 \ge u_0+\frac{1}{\Gamma(\gamma)}\int_{t_2-t_1}^{t_2}(t_2-s)^{\gamma-1}f(u^0)ds\\
=u_0+\frac{1}{\Gamma(\gamma)}\int_{0}^{t_1}(t_1-\tau)^{\gamma-1}f(u^0)d\tau=u^1(t_1).
 \end{multline*}
 The second equality here is achieved by a change of variable $\tau=s-(t_2-t_1)$. 
 Hence, $u^1$ is non-decreasing on $[0,\infty)$. Similarly,
  \begin{multline*}
 u^2(t_2)=u_0+\frac{1}{\Gamma(\gamma)}\int_0^{t_2}(t_2-s)^{\gamma-1}f(u^1(s))ds
 \ge u_0+\frac{1}{\Gamma(\gamma)}\int_{t_2-t_1}^{t_2}(t_2-s)^{\gamma-1}f(u^1(s))ds \\
 \ge u_0+\frac{1}{\Gamma(\gamma)}\int_{t_2-t_1}^{t_2}(t_2-s)^{\gamma-1}f(u^1(s-(t_2-t_1)))ds
=u^2(t_1).
 \end{multline*}
 $u^2$ is non-decreasing on $[0, \infty)$. By induction, $u^n$ is non-decreasing. As a consequence, the sequence $\{u^n(t)\}$ converges to a non-decreasing function $\bar{u}(t)$ for any $t\in [0, T_b)$. By monotone convergence theorem and taking the limit both sides of \[
 u^n(t)=u_0+\frac{1}{\Gamma(\gamma)}\int_0^t(t-s)^{\gamma-1}f(u^{n-1}(s))ds,
 \]
$\bar{u}$ satisfies \eqref{eq:vol}. Thus by the uniqueness of solution (Proposition \ref{pro:exist}) it must be $u$. Hence, $u$ is  non-decreasing.

If $T_b<\infty$, according to the definition of $T_b$ and  the monotonicity, we have $\lim_{t\to T_b^-}u(t)=\infty$. If $T_b=\infty$,  we find
\[
u(t)=u_0+\frac{1}{\Gamma(\gamma)}\int_0^t(t-s)^{\gamma-1}f(u(s))ds
\ge u_0+\frac{f(u_0)}{\Gamma(\gamma)}\int_0^t(t-s)^{\gamma-1}ds\to\infty.
\]
\end{proof}

The next result, which is an Osgood type criterion is essentially from \cite{by12} for the Volterra type integral equations. Here, we reinterpret it for our fractional ODE \eqref{eq:ode1}, and using similar ideas we present an improved proof, which enables us to improve the bounds of blow-up time in Section \ref{sec:specialcase}:
\begin{pro}\label{pro:blow-upcriteria}
Suppose $f(u)$ is locally Lipschitz, non-decreasing on $(0,\infty)$, $u_0>0$ and $f(u_0)>0$. Then, $T_b<\infty$ if and only if there exists $U>0$
\begin{gather}
\int_U^{\infty}\left(\frac{u}{f(u)}\right)^{1/\gamma}\frac{du}{u}<\infty.
\end{gather}
\end{pro}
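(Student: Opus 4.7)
The plan is a doubling argument on the Volterra form \eqref{eq:vol}, combined with a Cauchy-condensation-style comparison of dyadic sums and the integral $\int^\infty (u/f(u))^{1/\gamma}\,du/u$. By Lemma \ref{lmm:toinfty}, the solution $u$ is continuous and non-decreasing on $[0,T_b)$ with $u(t)\to +\infty$ as $t\to T_b^-$, so for each $n\geq 0$ there is a unique $t_n\in [0,T_b)$ with $u(t_n)=u_n:=2^n u_0$. In particular $t_0=0$, $t_n\uparrow T_b$, and $T_b=\sum_{n\geq 0}(t_{n+1}-t_n)$ (possibly $+\infty$).

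The central estimate is a two-sided bound on $t_{n+1}-t_n$. Splitting
\[
u(t_{n+1})-u(t_n)=\frac{1}{\Gamma(\gamma)}\int_0^{t_n}\!\bigl[(t_{n+1}-s)^{\gamma-1}-(t_n-s)^{\gamma-1}\bigr]f(u(s))\,ds+\frac{1}{\Gamma(\gamma)}\int_{t_n}^{t_{n+1}}\!(t_{n+1}-s)^{\gamma-1}f(u(s))\,ds,
\]
the first integral is $\leq 0$ since $\gamma<1$ and $f\geq 0$, while the second is bounded above by $f(2u_n)(t_{n+1}-t_n)^\gamma/\Gamma(1+\gamma)$ via monotonicity. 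Since the left side equals $u_n$, this yields
\[
t_{n+1}-t_n\geq\bigl[\Gamma(1+\gamma)\,u_n/f(2u_n)\bigr]^{1/\gamma}.
\]
For the reverse bound I discard the (nonnegative) $[0,t_n]$ contribution in $u(t_{n+1})-u_0$, use $f(u(s))\geq f(u_n)$ on $[t_n,t_{n+1}]$ to get $u(t_{n+1})-u_0\geq f(u_n)(t_{n+1}-t_n)^\gamma/\Gamma(1+\gamma)$, and bound the left side by $2u_n$, obtaining
\[
t_{n+1}-t_n\leq\bigl[2\Gamma(1+\gamma)\,u_n/f(u_n)\bigr]^{1/\gamma}.
\]

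Summing and observing that $(u_n/f(2u_n))^{1/\gamma}=2^{-1/\gamma}(u_{n+1}/f(u_{n+1}))^{1/\gamma}$, the two series $\sum(u_n/f(u_n))^{1/\gamma}$ and $\sum(u_n/f(2u_n))^{1/\gamma}$ converge together, and both are equivalent to $T_b$ up to multiplicative constants depending only on $\gamma$. To pass to the integral, on each dyadic block $[u_n,2u_n]$ monotonicity of $f$ sandwiches $(u/f(u))^{1/\gamma}$ between $(u_n/f(2u_n))^{1/\gamma}$ and $(2u_n/f(u_n))^{1/\gamma}$, and $\int_{u_n}^{2u_n}du/u=\ln 2$, so $\int_{u_0}^\infty(u/f(u))^{1/\gamma}\,du/u$ is equivalent to the same dyadic sum; the conclusion for an arbitrary threshold $U>0$ then follows because the integrand is locally bounded on $(0,\infty)$. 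The main obstacle is the upper bound on $t_{n+1}-t_n$: a naive increment estimate $u(t_{n+1})-u(t_n)\gtrsim f(u_n)(t_{n+1}-t_n)^\gamma$ would require controlling the negative memory term $\int_0^{t_n}[(t_{n+1}-s)^{\gamma-1}-(t_n-s)^{\gamma-1}]f(u(s))\,ds$, which can be arbitrarily large in magnitude; the workaround is to compare against $u(t_{n+1})-u_0\leq 2u_n$ using the full Volterra integral so that no cancellation is discarded. This is the same sharpening that allows the tight blow-up bounds of Section \ref{sec:specialcase}.
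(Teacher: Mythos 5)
Your proof is correct and follows essentially the same route as the paper's: both pick a geometric sequence of levels $u(t_n)$ (the paper uses $r^{n\gamma}$, you use $2^n u_0$), derive two-sided bounds on $t_{n+1}-t_n$ from the Volterra form by dropping the nonpositive memory increment on one side and keeping the full integral on the other, and then match the resulting series against the Osgood integral block by block. The only differences are cosmetic (dyadic versus general ratio $r$, and comparing directly with $\int (u/f(u))^{1/\gamma}\,du/u$ rather than with $\int f(\tau^\gamma)^{-1/\gamma}\,d\tau$ followed by a substitution).
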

\begin{proof}
Consider the equivalent Volterra type equation \eqref{eq:vol}. By Lemma \ref{lmm:toinfty}, $u$ is increasing and $u(t)\to \infty$ as $t\to T_b^-$. Pick $r>\max(1, u_0^{1/\gamma})$. There exists $t_n<T_b$ so that $u(t_n)=r^{n\gamma}$ for $n=1,2,\ldots$. 

By \eqref{eq:vol}, we have 
\begin{multline*}
u(t_n)=u_0+\frac{1}{\Gamma(\gamma)}\int_0^{t_n}(t_n-s)^{\gamma-1}f(u(s))ds
\ge \frac{f(u(t_{n-1}))}{\Gamma(\gamma)}\int_{t_{n-1}}^{t_n}(t_n-s)^{\gamma-1}ds\\
=\frac{1}{\Gamma(1+\gamma)}f(u(t_{n-1})) (t_n-t_{n-1})^{\gamma}.
\end{multline*}
As a result, there exist constants $C(\gamma)>0$ and $C_1(\gamma, r)>0$ such that
 \[
t_n-t_{n-1}\le C(\gamma)\left(\frac{u(t_n)}{f(u(t_{n-1}))}\right)^{1/\gamma}
=C(\gamma)\frac{r^2}{r-1}\frac{r^{n-1}-r^{n-2}}{f(r^{(n-1)\gamma})^{1/\gamma}}
\le C_1(\gamma,r)\int_{r^{n-2}}^{r^{n-1}}\frac{1}{f(\tau^{\gamma})^{1/\gamma}}d\tau.
\]
On the other hand, 
\begin{multline*}
u(t_n)=u_0+\frac{1}{\Gamma(\gamma)}\int_0^{t_n}(t_n-s)^{\gamma-1}f(u(s))ds \\
\le u_0+\frac{1}{\Gamma(\gamma)}\int_0^{t_{n-1}}(t_{n-1}-s)^{\gamma-1}f(u(s))ds+\frac{1}{\Gamma(\gamma)}\int_{t_{n-1}}^{t_{n}}(t_n-s)^{\gamma-1}f(u(s))ds\\
\le u(t_{n-1})+\frac{f(u(t_n))}{\Gamma(1+\gamma)}(t_n-t_{n-1})^{\gamma}.
\end{multline*}
As a result, there exist two constants $\bar{C}_1(\gamma)>0$ and $\bar{C}_2(\gamma, r)>0$ such that
\[
t_n-t_{n-1} \ge \bar{C}_1(\gamma)\frac{(r-1)^{1/\gamma}}{r(r-1)}\frac{r^{n+1}-r^{n}}{f(r^{n\gamma})^{1/\gamma}}\ge \bar{C}_2(\gamma, r)\int_{r^n}^{r^{n+1}}
\frac{1}{f(\tau^{\gamma})^{1/\gamma}}d\tau.
\]
Hence, $T_b<\infty$ if and only if $\int^{\infty}\frac{1}{f(\tau^{\gamma})^{1/\gamma}}d\tau<\infty$, or there exists some $U>0$ such that
\[
\int_U^{\infty}\left(\frac{u}{f(u)}\right)^{1/\gamma}\frac{du}{u}<\infty.
\]
\end{proof}

\section{Comparison principles}\label{sec:comp}
The following comparison principle (\cite[Theorem 7]{liliu16}) is useful when we study the behavior of \eqref{eq:ode1} and derive certain Gr\"{o}nwall type inequalities:
\begin{pro}[\cite{liliu16}]\label{pro:comp}
Suppose $f(u)$ is locally Lipschitz, non-decreasing on some interval $(\alpha, \beta)$. Suppose $v_1: [0, T)\mapsto (\alpha, \beta)$ is continuous. If $v_1$ satisfies $$
D_c^{\gamma}v_1\le f(v_1), ~on~[0,T),
$$
where this inequality means $D_c^{\gamma}v_1-f(v_1)$ is a nonpositive distribution (see \cite[Def. 6]{liliu16}). Let $v_2$ be the unique solution to the equation $$
D_c^{\gamma}v_2=f(v_2), \ \ v_2(0)\in (\alpha, \beta),
$$
on $[0, T_b)$ provided by Proposition \ref{pro:exist}. If $v_2(0)\ge v_1(0)$, then on $[0, \min(T,T_b))$, $v_1(t)\le v_2(t)$.

Correspondingly, if $v_1$ satisfies 
$$
D_c^{\gamma}v_1\ge f(v_1), ~on~[0,T),
$$
where the inequality means $D_c^{\gamma}v_1-f(v_1)$ is a nonnegative distribution and $v_2$ is the solution to $$
D_c^{\gamma}v_2=f(v_2), \ \ v_2(0)\in (\alpha, \beta).
$$
If $v_2(0)\le v_1(0)$, then $v_1(t)\ge v_2(t)$ on $[0, \min(T, T_b))$.
\end{pro}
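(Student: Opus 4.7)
The plan is to reduce the distributional differential inequality to a Volterra-type integral inequality and then close the argument with a fractional Gr\"onwall estimate. First I would convolve both sides of $D_c^{\gamma}v_1\le f(v_1)$ with the nonnegative kernel $g_{\gamma}$. Using the group property $g_{\gamma}*g_{-\gamma}=\delta$ together with Definition \ref{def:caputo}, this yields
\[
v_1(t)\le v_1(0)+\frac{1}{\Gamma(\gamma)}\int_0^t(t-s)^{\gamma-1}f(v_1(s))\,ds
\]
on $[0,T)$ (pointwise rather than merely a.e.\ by the continuity of $v_1$ and $f\circ v_1$), while the analogue for $v_2$ holds with equality on $[0,T_b)$ by \eqref{eq:vol}. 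Fixing $T'<\min(T,T_b)$ and subtracting gives
\[
v_1(t)-v_2(t)\le (v_1(0)-v_2(0))+\frac{1}{\Gamma(\gamma)}\int_0^t(t-s)^{\gamma-1}\bigl(f(v_1(s))-f(v_2(s))\bigr)\,ds.
\]

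The crucial observation is that on $[0,T']$ both $v_1$ and $v_2$ take values in a compact subset $K$ of $(\alpha,\beta)$ on which $f$ admits some Lipschitz constant $L$, and the combination of monotonicity and Lipschitz continuity gives the one-sided bound $f(v_1(s))-f(v_2(s))\le L\,(v_1(s)-v_2(s))_+$: when $v_1(s)\le v_2(s)$ the left side is nonpositive by monotonicity of $f$, and otherwise the Lipschitz estimate applies. Using the hypothesis $v_1(0)\le v_2(0)$ to eliminate the boundary term and then taking positive parts of both sides gives the closed Volterra inequality
\[
w(t)\le \frac{L}{\Gamma(\gamma)}\int_0^t (t-s)^{\gamma-1}w(s)\,ds, \qquad t\in[0,T'],
\]
for $w(t):=(v_1(t)-v_2(t))_+$. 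A fractional Gr\"onwall argument now forces $w\equiv 0$ on $[0,T']$; this can be seen by iterating the kernel and invoking the group identity \eqref{eq:group}, so that the $n$-fold iterated bound involves $g_{n\gamma}$ and decays like $L^n t^{n\gamma}/\Gamma(n\gamma+1)\to 0$, or equivalently by convolving with the resolvent of $\frac{L}{\Gamma(\gamma)}t^{\gamma-1}$ exactly as in the proof of Lemma \ref{lmm:pos}. Letting $T'\uparrow \min(T,T_b)$ yields $v_1\le v_2$ on $[0,\min(T,T_b))$, and the reverse inequality stated in the proposition follows by the symmetric argument applied to $\tilde v_i:=-v_i$ with $\tilde f(v):=-f(-v)$, which is again non-decreasing and locally Lipschitz.

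The main technical obstacle is justifying the very first convolution step: since $D_c^{\gamma}v_1\le f(v_1)$ is only a distributional inequality, one must verify that convolution with $g_{\gamma}$ preserves one-sided inequalities. This is fine because $g_{\gamma}$ is a nonnegative locally integrable function, so convolution with it (interpreted in the sense described in the remark following Definition \ref{def:caputo}) maps nonpositive distributions to nonpositive distributions; the continuity of $v_1$ and $f\circ v_1$ then upgrades the resulting distributional inequality to a genuine pointwise one on $[0,T)$.
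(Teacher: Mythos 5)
Your proposal is correct. Note that the paper does not actually prove this proposition --- it is imported by citation as \cite[Theorem 7]{liliu16} --- so there is no in-paper argument to compare against; what you have written is a legitimate self-contained proof. The route is the natural one: convolving the nonpositive distribution $D_c^{\gamma}v_1-f(v_1)$ with the nonnegative kernel $g_\gamma$ and using $g_\gamma*g_{-\gamma}=\delta$ to pass to the Volterra inequality is exactly the mechanism the paper's framework is designed to support, and your one-sided bound $f(v_1)-f(v_2)\le L\,(v_1-v_2)_+$ (monotonicity when $v_1\le v_2$, local Lipschitz continuity otherwise, with $L$ taken on a compact $K\subset(\alpha,\beta)$ containing both ranges on $[0,T']$) is the right way to combine the two hypotheses on $f$. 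The passage to $w=(v_1-v_2)_+$ is sound because after dropping the nonpositive boundary term the right-hand side is nonnegative, so taking positive parts preserves the inequality, and the iterated-kernel Grönwall step $w\le L^n g_{n\gamma}*w\to 0$ closes the argument. The reduction of the second assertion to the first via $\tilde v_i=-v_i$, $\tilde f(v)=-f(-v)$ is also correct, since $\tilde f$ inherits monotonicity and local Lipschitz continuity on $(-\beta,-\alpha)$.
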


Using the idea of the proof for \cite[Theorem 7]{liliu16}, we are able to show some other versions of comparison principles. For example, an integral version is as follows
\begin{pro}
Suppose $f(\cdot)$ is locally Lipschitz, non-decreasing on $(\alpha, \beta)$. If a continuous function $v: [0, T)\mapsto (\alpha, \beta)$ satisfies the following inequality
\[
v(t)\le u_0+\frac{1}{\Gamma(\gamma)}\int_0^t(t-s)^{\gamma-1}f(v(s))ds,
~~t\in [0, T),
\]
where $u$ is the solution of \eqref{eq:ode1} with initial value $u_0\in (\alpha, \beta)$ on $[0, T_b)$, then we have $v\le u$ on $(0, \min(T, T_b))$. 

Similarly, if 
\[
v(t)\ge u_0+\frac{1}{\Gamma(\gamma)}\int_0^t(t-s)^{\gamma-1}f(v(s))ds,
~~t\in [0, T),
\]
then we have $v\ge u$ on $(0, \min(T, T_b))$.
\end{pro}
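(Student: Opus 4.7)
The plan is to reduce the statement to a fractional Gr\"{o}nwall inequality for the positive part $\phi(t) := (v(t)-u(t))_+$. Fix any compact subinterval $[0,T'']\subset [0,\min(T,T_b))$. Since $u$ and $v$ are continuous there with values in $(\alpha,\beta)$, their ranges on $[0,T'']$ lie in a common compact subset $K\Subset(\alpha,\beta)$, and by the local Lipschitz assumption $f$ admits some Lipschitz constant $L$ on $K$.

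Subtracting the integral equation satisfied by $u$ from the integral inequality satisfied by $v$ gives
\[
v(t)-u(t)\le \frac{1}{\Gamma(\gamma)}\int_0^t (t-s)^{\gamma-1}\bigl[f(v(s))-f(u(s))\bigr]\,ds.
\]
Here the monotonicity of $f$ plays the decisive role. Split the integration set into $\{s: v(s)\le u(s)\}$, on which $f(v(s))-f(u(s))\le 0$, and its complement, on which $0\le f(v(s))-f(u(s))\le L(v(s)-u(s))=L\phi(s)$. In both cases $f(v(s))-f(u(s))\le L\phi(s)$, so
\[
v(t)-u(t)\le \frac{L}{\Gamma(\gamma)}\int_0^t (t-s)^{\gamma-1}\phi(s)\,ds.
\]
The right-hand side is non-negative, so taking positive part on the left yields the fractional Gr\"{o}nwall inequality
\[
\phi(t)\le \frac{L}{\Gamma(\gamma)}\int_0^t (t-s)^{\gamma-1}\phi(s)\,ds, \qquad t\in[0,T''].
\]

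Iterating this inequality $n$ times and using the group property $g_\gamma^{*n}=g_{n\gamma}$ from \eqref{eq:group} yields
\[
\phi(t)\le \frac{L^n}{\Gamma(n\gamma)}\int_0^t (t-s)^{n\gamma-1}\phi(s)\,ds \le \frac{M(LT''{}^\gamma)^n}{\Gamma(n\gamma+1)},
\]
where $M=\sup_{[0,T'']}\phi<\infty$ by the continuity of $u$ and $v$. Since the Mittag-Leffler series $\sum_n z^n/\Gamma(n\gamma+1)$ converges for every $z\in\mathbb{R}$, its terms tend to $0$, so $\phi\equiv 0$ on $[0,T'']$; i.e.\ $v\le u$ there. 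Letting $T''\uparrow\min(T,T_b)$ yields $v\le u$ on $[0,\min(T,T_b))$, and the second statement follows by the symmetric argument applied to $\phi(t):=(u(t)-v(t))_+$. The only point that really requires care is the pointwise bound $f(v(s))-f(u(s))\le L\phi(s)$; monotonicity of $f$ is essential here, as without it one could only reach a Gr\"{o}nwall estimate for $|v-u|$, which by itself would not preserve the direction of the comparison.
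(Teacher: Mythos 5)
Your proof is correct. The paper itself does not supply a proof of this proposition---it only remarks that it follows "using the idea of the proof" of the comparison principle from \cite{liliu16} (Proposition \ref{pro:comp})---so there is no written argument to compare against line by line. Your route is a clean and self-contained way to fill that gap: subtracting the Volterra equation for $u$ from the integral inequality for $v$, using monotonicity of $f$ to obtain the one-sided pointwise bound $f(v(s))-f(u(s))\le L\,(v(s)-u(s))_+$ on a compact time interval where a common Lipschitz constant is available, and then killing $\phi=(v-u)_+$ by iterating the kernel via $g_\gamma^{*n}=g_{n\gamma}$ and the decay of the Mittag--Leffler coefficients. You also correctly identify the one step where monotonicity (rather than mere Lipschitz continuity) is indispensable, namely preserving the direction of the inequality rather than only controlling $|v-u|$; the remaining steps (nonnegativity of the kernel so convolution preserves inequalities, finiteness of $\sup\phi$ by continuity, exhausting $[0,\min(T,T_b))$ by compact subintervals) are all in order.
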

Note that the integral version is not a pure repetition of Proposition \ref{pro:comp} since we do not necessarily have $D_c^{\gamma}v\le f(v)$ ($D_c^{\gamma}v\ge f(v)$) for all $t\in (0, T)$. 
Another version of comparison principle is a corollary of Proposition \ref{pro:comp}:
\begin{cor}\label{cor:g}
Suppose both $f_1(u)$ and $f_2(u)$ are locally Lipschitz on $(\alpha, \beta)$, satisfying $f_1(u)\ge f_2(u)$ for any $u\in (\alpha, \beta)$. Assume that one of them is non-decreasing. Let $u_1$ and $u_2$ be the solutions of $D_c^{\gamma}u=f_1(u)$ and $D_c^{\gamma}u=f_2(u)$ on the intervals $(0,T_b^1)$ and $(0,T_b^2)$ with initial values $u_1(0)$ and $u_2(0)$ respectively. If in addition $\alpha<u_2(0)\le u_1(0)<\beta$, then \[
u_1\ge u_2, \forall t\in (0, \min(T_b^1, T_b^2)).
\]
\end{cor}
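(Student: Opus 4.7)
The plan is to reduce this to Proposition \ref{pro:comp} by splitting into two cases according to which of $f_1, f_2$ is non-decreasing, and in each case reinterpreting one of $u_1, u_2$ as a sub- or super-solution to the equation governing the other. Throughout, I work on the interval $[0, \min(T_b^1, T_b^2))$, on which both $u_1$ and $u_2$ are continuous and take values in $(\alpha, \beta)$ by Proposition \ref{pro:exist}, so that all compositions $f_i(u_j)$ make sense.

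First, suppose $f_1$ is non-decreasing. Since $f_1(u) \geq f_2(u)$ pointwise on $(\alpha, \beta)$, the function $u_2$ satisfies
\[
D_c^{\gamma} u_2 = f_2(u_2) \leq f_1(u_2)
\]
on $[0, T_b^2)$, i.e., $u_2$ is a sub-solution for the equation $D_c^{\gamma} v = f_1(v)$ in the distributional sense required by Proposition \ref{pro:comp}. Since $u_1$ is the unique solution to that same equation with initial value $u_1(0) \geq u_2(0)$, Proposition \ref{pro:comp} (the first part, taking $v_1 = u_2$ and $v_2 = u_1$ with driver $f_1$) gives $u_2 \leq u_1$ on $[0, \min(T_b^1, T_b^2))$.

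Second, suppose instead $f_2$ is non-decreasing. Then, using $f_1 \geq f_2$,
\[
D_c^{\gamma} u_1 = f_1(u_1) \geq f_2(u_1),
\]
so $u_1$ is a super-solution for $D_c^{\gamma} v = f_2(v)$. Applying the second (super-solution) part of Proposition \ref{pro:comp} with $v_1 = u_1$, $v_2 = u_2$, and driver $f_2$ (whose monotonicity is exactly what the proposition requires), we again get $u_1 \geq u_2$ on the common interval of existence.

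There is essentially no obstacle beyond keeping careful track of which function's monotonicity is invoked, since Proposition \ref{pro:comp} itself requires that the driver $f$ in the target equation be non-decreasing. The two cases above cover exactly the hypothesis that one of $f_1, f_2$ is non-decreasing, so the conclusion follows in either case. The only small subtlety worth mentioning is that we do not need $u_1$ and $u_2$ to remain in any particular subinterval of $(\alpha, \beta)$: the definition of $T_b^1$ and $T_b^2$ in Proposition \ref{pro:exist} guarantees that on $[0, \min(T_b^1, T_b^2))$ both solutions stay in $(\alpha, \beta)$, which is all that is needed to apply Proposition \ref{pro:comp}.
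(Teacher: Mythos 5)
Your proof is correct and is essentially identical to the paper's own argument: in each case you use $f_1 \ge f_2$ to reinterpret one solution as a sub- or super-solution of the other equation (whichever has the non-decreasing driver) and then invoke Proposition \ref{pro:comp}. The only difference is cosmetic (order of the two cases and the added remarks about the interval of validity).
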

\begin{proof}
First, assume that $f_2$ is non-decreasing. Then, we have \[
D_c^{\gamma}u_1=f_1(u_1)\ge f_2(u_1).
\]
Applying Proposition \ref{pro:comp} for $D_c^{\gamma}u_1\ge f_2(u_1)$ yields the claim.

If for otherwise $f_1$ is non-decreasing, we have \[
D_c^{\gamma}u_2=f_2(u_2)\le f_1(u_2).
\]
Applying Proposition \ref{pro:comp} for $D_c^{\gamma}u_2\le f_1(u_2)$ yields the claim.
\end{proof}

\section{blowup and long time behavior for a class of fractional ODEs} \label{sec:specialcase}

In this section, we will focus on the cases $f(u)=Au^p$ and $(\alpha, \beta)=(0, \infty)$ for simplicity.
\begin{gather}\label{eq:ode2}
D_c^{\gamma}u=Au^p,~ u(0)=u_0>0.
\end{gather}
This type of equations are general enough. For example, if $p>0$ and there exist $C_1>0$, $C_2 >0$ such that $C_1u^p\le f(u)\le C_2u^p$, then the solution is under control according to Corollary \ref{cor:g}. We will discuss in different cases to show that \eqref{eq:ode2} shares the regularity properties of normal time derivative ODE. Moreover, one can also prove some ``time-delay properties'' of \eqref{eq:ode2}.

\subsection{Finite time blowup}
As an application of Proposition \ref{pro:blow-upcriteria}, we have the following theorem:
\begin{thm}\label{thm:increasing}
Let $p\ge 0$, $A>0, u_0>0$, and $u(t)$ be the unique solution to fractional ODE \eqref{eq:ode2}. Then, we have the following claims: (1). $u(t)$ is an increasing function and $\lim_{t\to T_b^-}u(t)=\infty$. (2). All the solution curves with $u_0\ge 0$ do not intersect with each other. (3). If $0\le p\le 1$,  $T_b=\infty$, i.e. the solution exists globally.  (4). If $p>1$, $u(t)$ blows up in finite time (i.e.  $T_b<\infty$ and $\lim_{t\to T_b^-}u(t)=\infty$).
\end{thm}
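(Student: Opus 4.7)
The theorem is an application of the machinery already developed in Sections~\ref{sec:basicpro}--\ref{sec:comp} specialized to $f(u)=Au^p$, so my plan is to invoke these results one by one.

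For claim (1), I would first note that $f(u)=Au^p$ is locally Lipschitz and non-decreasing on $(0,\infty)$ and $f(u_0)=Au_0^p>0$, so Lemma~\ref{lmm:toinfty} applies and yields that $u$ is non-decreasing on $(0,T_b)$ with $\lim_{t\to T_b^-}u(t)=\infty$. To upgrade non-decreasing to strictly increasing, I would invoke Theorem~\ref{thm:mon}. Since $u$ is non-decreasing and $u(0)=u_0>0$, the orbit stays in the interval $(u_0/2,\infty)$, on which $f(u)=Au^p$ is $C^1$ with $f'$ locally Lipschitz for every $p\ge 0$; thus Theorem~\ref{thm:mon} gives strict monotonicity of $u$ on $(0,T_b)$.

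For claim (2), I would apply Proposition~\ref{pro:nointersect} directly, since $f(u)=Au^p$ is locally Lipschitz and non-decreasing on $[0,\infty)$.

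For claims (3) and (4), I would apply the Osgood-type blow-up criterion, Proposition~\ref{pro:blow-upcriteria}. With $f(u)=Au^p$, one computes
\[
\int_U^{\infty}\left(\frac{u}{f(u)}\right)^{1/\gamma}\frac{du}{u}
=A^{-1/\gamma}\int_U^{\infty} u^{(1-p)/\gamma-1}\,du.
\]
This integral converges iff $(1-p)/\gamma-1<-1$, i.e.\ iff $p>1$. Hence Proposition~\ref{pro:blow-upcriteria} gives $T_b=\infty$ when $0\le p\le 1$ and $T_b<\infty$ when $p>1$; combined with claim~(1), the latter produces the finite-time blow-up in (4).

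There is no serious obstacle here: the only subtlety worth being careful about is verifying the hypotheses of Theorem~\ref{thm:mon} when $0<p<1$, where $f'(u)=Apu^{p-1}$ is singular at $u=0$; this is handled by restricting the interval $(\alpha,\beta)$ to one bounded away from zero, which is legitimate because $u\ge u_0>0$ throughout the interval of existence by Lemma~\ref{lmm:toinfty}.
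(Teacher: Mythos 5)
Your proof is correct and follows essentially the same route as the paper: Lemma~\ref{lmm:toinfty} for claim (1), Proposition~\ref{pro:nointersect} for claim (2), and the Osgood criterion of Proposition~\ref{pro:blow-upcriteria} with the explicit integral computation for claims (3) and (4). Your extra invocation of Theorem~\ref{thm:mon} on an interval bounded away from zero to obtain \emph{strict} monotonicity is a careful touch the paper glosses over (Lemma~\ref{lmm:toinfty} only yields non-decreasing); just note that for $0<p<1$ the function $Au^p$ is locally Lipschitz only on $(0,\infty)$, not on $[0,\infty)$, so claim (2) likewise requires restricting to $(0,\infty)$ where the solutions live (as in Remark~\ref{rmk:nonintersect}), exactly the same fix you already applied for Theorem~\ref{thm:mon}.
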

\begin{proof}
By Lemma \ref{lmm:toinfty}, $u(\cdot)$ is an increasing function and $\lim_{t\to T_b^-}u(t)=\infty$. Since $u\ge u_0>0$, and $Au^p$ is increasing in $(0,\infty)$, by Proposition \ref{pro:nointersect} and Remark \ref{rmk:nonintersect}, the solution curves wtih $u_0\ge 0$ do not intersect.

On $[u_0,\infty)$, $f(u)=Au^p$ is locally Lipschitz continuous. As a corollary of Proposition \ref{pro:blow-upcriteria},  when $0\le p\le 1$, the solution exists globally, i.e $T_b=\infty$. 
And when $p>1$, the solution blows up in finite time.
\end{proof}

\subsection{The bounds of blow-up time} \label{subsec:bounds}
In this subsection, our main goal is to find suitable bounds of the blow-up time and to understand the effects of the memory introduced by the Caputo derivatives. Clearly, one possible lower bound is the radius of convergence of the power series $u=\sum_{n=0}^{\infty}a_n t^{n\gamma}$, however the asymptotic behavior of $a_n$ is hard to find. In \cite{roberts1993volterra}, the author provided some bounds for the blow-up time of the integral equation \eqref{eq:vol}. In this paper, we have the following improved result:

\begin{pro}\label{pro:bound}
Suppose $\gamma\in (0,1)$, $p>1$, $A>0$, and $u_0>0$. Let $T_b$ be the blow-up time of the solution to \eqref{eq:ode2}. Then, we have the following inequality
\begin{multline} 
\left(\frac{\Gamma(1+\gamma)}{Au_0^{p-1}}\right)^{1/\gamma}
\sup_{r>1}\frac{(r^{\gamma}-1)^{1/\gamma}}{r(r^{p-1}-1)} \le T_b \\
\le 
\left(\frac{\Gamma(1+\gamma)}{Au_0^{p-1}}\right)^{1/\gamma}\inf_{r>1, m\in\mathbb{Z}_+}\left(\frac{r^p}{r^{(m+1)(p-1)}-r^{m(p-1)}}+\left(\frac{1-r^{m\gamma(1-p)}}{p-1}\right)^{1/\gamma}\right).
\end{multline}
\end{pro}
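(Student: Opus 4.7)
My plan is to introduce the auxiliary sequence $\{t_n\}_{n\ge 0}$ defined by $t_0=0$ and $u(t_n)=u_0 r^{n\gamma}$ for $n\ge 1$, where $r>1$ is a free parameter. By Theorem~\ref{thm:increasing}, $u$ is strictly increasing on $[0,T_b)$ with $\lim_{t\to T_b^-}u(t)=\infty$, so each $t_n$ is uniquely defined and $t_n\nearrow T_b$, whence $T_b=\sum_{n=1}^{\infty}(t_n-t_{n-1})$. For the lower bound, I follow the strategy of Proposition~\ref{pro:blow-upcriteria}: subtracting the Volterra equations \eqref{eq:vol} at $t_n$ and $t_{n-1}$, the memory kernel $(t_n-\tau)^{\gamma-1}-(t_{n-1}-\tau)^{\gamma-1}$ is non-positive on $[0,t_{n-1}]$, so I can discard that piece to obtain $u(t_n)-u(t_{n-1})\le \frac{Au(t_n)^p}{\Gamma(1+\gamma)}(t_n-t_{n-1})^\gamma$. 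Substituting $u(t_n)=u_0 r^{n\gamma}$, taking $\gamma$-th roots, and summing the convergent geometric series in $r^{1-p}$ (convergent since $p>1$) yields the stated lower bound after elementary algebra, followed by taking the supremum over $r>1$.

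For the upper bound I split $T_b=t_m+(T_b-t_m)$ for a fixed $m\in\mathbb{Z}_+$ and bound each piece. The tail is handled by the opposite one-sided estimate: keeping only the integral on $[t_{n-1},t_n]$ and using $u(\tau)\ge u(t_{n-1})$ there gives $u(t_n)\ge \frac{Au(t_{n-1})^p}{\Gamma(1+\gamma)}(t_n-t_{n-1})^\gamma$, so $t_n-t_{n-1}\le \bigl(\Gamma(1+\gamma)u(t_n)/(Au(t_{n-1})^p)\bigr)^{1/\gamma}$. Inserting $u(t_n)=u_0 r^{n\gamma}$ and summing the resulting geometric series for $n\ge m+1$ collapses into exactly the first term $r^p/(r^{(m+1)(p-1)}-r^{m(p-1)})$ of the stated bound, modulo the universal prefactor.

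The delicate step, which I expect to be the main obstacle, is bounding $t_m$ itself sharply; a naive sum of $\gamma$-th-root estimates for $n=1,\dots,m$ turns out to be strictly too loose. The key is the telescoping inequality
\[
u(t)-u(s)\ \ge\ \frac{Au(s)^p}{\Gamma(1+\gamma)}\bigl(t^\gamma-s^\gamma\bigr),\qquad 0\le s<t<T_b,
\]
which I prove by decomposing $u(t)-u(s)$ via \eqref{eq:vol} into integrals over $[0,s]$ and $[s,t]$, bounding $u(\tau)^p$ from above by $u(s)^p$ on the first (where the kernel $(t-\tau)^{\gamma-1}-(s-\tau)^{\gamma-1}$ is negative) and from below by $u(s)^p$ on the second; the elementary integrals produce $t^\gamma-(t-s)^\gamma-s^\gamma$ and $(t-s)^\gamma$ respectively, whose sum telescopes cleanly to $t^\gamma-s^\gamma$. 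Applied to any partition $0=s_0<s_1<\cdots<s_N=t_m$, the left side telescopes to $t_m^\gamma$, yielding
\[
t_m^\gamma\ \le\ \sum_{n=1}^N\frac{\Gamma(1+\gamma)\bigl(u(s_n)-u(s_{n-1})\bigr)}{Au(s_{n-1})^p}.
\]
The right side is an upper Riemann sum for $\frac{\Gamma(1+\gamma)}{A}\int_{u_0}^{u(t_m)}u^{-p}\,du$ (under the continuous strictly increasing change of variable $u=u(s)$, permitted by Lemma~\ref{lmm:regu}), so letting the mesh tend to zero gives $t_m^\gamma\le \frac{\Gamma(1+\gamma)\bigl(1-r^{m\gamma(1-p)}\bigr)}{Au_0^{p-1}(p-1)}$, i.e.\ precisely the second term after taking $\gamma$-th roots. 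Adding this to the tail estimate and taking $\inf_{r>1,\,m\in\mathbb{Z}_+}$ completes the proof of the upper bound.
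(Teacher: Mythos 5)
Your argument is correct and reproduces the stated bounds exactly. The lower bound and the tail part of the upper bound (the sum over $n\ge m+1$) are the same as in the paper: discard the nonpositive difference of kernels on $[0,t_{n-1}]$ for the one direction, keep only the integral over $[t_{n-1},t_n]$ for the other, and sum the geometric series. Where you genuinely diverge from the paper is the estimate of $t_m$, which is indeed the delicate step. The paper freezes the kernel at $t_m$, defines $v(t)=u_0+\frac{1}{\Gamma(\gamma)}\int_0^t(t_m-s)^{\gamma-1}f(u(s))\,ds$, observes $v\le u$ on $(0,t_m)$ with $v(t_m)=u(t_m)$, and then separates variables in the resulting ODE inequality $v'\ge \frac{1}{\Gamma(\gamma)}(t_m-t)^{\gamma-1}f(v)$ to get $\int_{u_0}^{u(t_m)}\frac{dv}{f(v)}\ge \frac{t_m^{\gamma}}{\Gamma(1+\gamma)}$. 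You instead prove the two-point inequality
\[
u(t)-u(s)\ \ge\ \frac{f(u(s))}{\Gamma(1+\gamma)}\bigl(t^{\gamma}-s^{\gamma}\bigr),
\]
whose derivation from \eqref{eq:vol} is sound (the kernel difference on $[0,s]$ is nonpositive and $f(u(\tau))\le f(u(s))$ there, while on $[s,t]$ both factors are bounded below; the terms $\pm(t-s)^{\gamma}$ cancel), and then telescope over a partition to recognize a left-endpoint Riemann sum for $\int_{u_0}^{u(t_m)}u^{-p}\,du$. Since $u$ is continuous and strictly increasing (Lemma \ref{lmm:regu} and Theorem \ref{thm:mon}), the sums converge to the integral as the mesh shrinks, and you land on exactly the same inequality $\int_{u_0}^{u(t_m)}\frac{dv}{f(v)}\ge\frac{t_m^{\gamma}}{\Gamma(1+\gamma)}$ with the same constant. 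Your route is more elementary (no auxiliary comparison function, no differentiation of $v$, hence no appeal to differentiability of the frozen-kernel integral) and the telescoping inequality is of some independent interest as a discrete lower bound for increasing solutions; the paper's route is shorter and generalizes verbatim to any nondecreasing $f$ via $\int du/f(u)$, though your Riemann-sum version does too.
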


\begin{proof}
Let $r>1$. We now choose $t_n$ such that 
\[
u(t_n)=u_0 r^{n\gamma}.
\]
It is then clear that $0=t_0<t_1<t_2\ldots$. For convenience, we denote
\[
k(t)=\frac{1}{\Gamma(\gamma)}t^{\gamma-1}, ~~K(t)=\frac{1}{\Gamma(1+\gamma)}t^{\gamma}.
\]
The following relation
\begin{multline*}
u(t_n)=u_0+\int_0^{t_{n-1}}k(t_n-s)f(u(s))ds+\int_{t_{n-1}}^{t_n}k(t_n-s)f(u(s))ds\\
\le u_0+\int_0^{t_{n-1}}k(t_{n-1}-s)f(u(s))ds+K(t_n-t_{n-1})f(u(t_n))
\end{multline*}
yields that
\[
K(t_n-t_{n-1})\ge \frac{u_0r^{n\gamma}(1-r^{-\gamma})}{f(u_0 r^{n\gamma})}.
\]
Hence
 \begin{equation}\label{grow}
 t_n-t_{n-1}\ge \left(\frac{\Gamma(1+\gamma)}{Au_0^{p-1}}\right)^{1/\gamma}\frac{(1-r^{-\gamma})^{1/\gamma}}{ r^{n(p-1)}}.
\end{equation}
As a result,
\[
T_b=\sum_{n=1}^{\infty}t_n-t_{n-1} \ge  \left(\frac{\Gamma(1+\gamma)}{Au_0^{p-1}}\right)^{1/\gamma} \frac{(r^{\gamma}-1)^{1/\gamma}}{r(r^{p-1}-1)}.
\]

To prove the upper bound, we fix $m\ge 1$, and then find
\begin{gather}\label{eq:upperest}
u(t)\ge u_0+\int_{0}^{t}\frac{1}{\Gamma(\gamma)}(t_m-s)^{\gamma-1} f(u(s))ds:=v(t), ~t\in (0, t_m).
\end{gather}
It is clear that $v(t_m)=u(t_m)$. As a result, \[
v'(t)=\frac{1}{\Gamma(\gamma)}(t_m-t)^{\gamma-1}f(u(t))\ge \frac{1}{\Gamma(\gamma)}(t_m-t)^{\gamma-1}f(v(t))
\]
and \[
\int_{u_0}^{u(t_m)}\frac{dv}{f(v)} \ge \frac{1}{\Gamma(1+\gamma)}t_m^{\gamma},
\]
implying 
\begin{gather}\label{eq:t1}
t_m\le \left(\frac{\Gamma(1+\gamma)}{A(p-1)u_0^{p-1}}\right)^{1/\gamma}(1-r^{m\gamma(1-p)})^{1/\gamma}.
\end{gather}

For $n\ge m+1$, we find
\begin{gather*}
u(t_n)\ge u_0+\int_{t_{n-1}}^{t_n}\frac{1}{\Gamma(\gamma)}(t_n-s)^{\gamma-1} f(u(t_{n-1}))ds,
\end{gather*}
and thus 
\begin{gather}\label{eq:tn}
 \frac{1}{\Gamma(1+\gamma)}(t_n-t_{n-1})^{\gamma} f(u(t_{n-1}))\le u_0r^{n\gamma}-u_0\le u_0r^{n\gamma}.
\end{gather}
Combining \eqref{eq:t1} and \eqref{eq:tn}, we finally have the upper bound, 
\begin{multline*}
T_b=\sum_{n=m+1}^{\infty}(t_n-t_{n-1})+t_m\le \left(\frac{\Gamma(1+\gamma)}{Au_0^{p-1}}\right)^{1/\gamma}
\frac{r^p}{r^{(m+1)(p-1)}-r^{m(p-1)}}+t_m
\\
= \left(\frac{\Gamma(1+\gamma)}{Au_0^{p-1}}\right)^{1/\gamma}
\left(\frac{r^p}{r^{(m+1)(p-1)}-r^{m(p-1)}}+\left(\frac{1-r^{m\gamma(1-p)}}{p-1}\right)^{1/\gamma}\right).
\end{multline*}
\end{proof}

In the proof of the upper bound, the estimate we did for $t_m$ essentially follows the method in \cite{roberts1993volterra}.
By optimizing the constants we get in the Proposition \ref{pro:bound}, we have
\begin{thm}\label{buT}
Let $\gamma \in (0,1)$, $p>1$, $A>0$ and $u_0>0$. The following bounds for the blow-up time $T_b$ of Equation \eqref{eq:ode2} hold, 
\begin{gather}
\left(\frac{\Gamma(1+\gamma)}{Au_0^{p-1}G(p)}\right)^{1/\gamma}\le T_b \le 
\left(\frac{\Gamma(1+\gamma)}{Au_0^{p-1}H(p,\gamma)}\right)^{1/\gamma} ,
\end{gather}
where
\begin{gather}\label{eq:blowconst}
G(p)=\min\left(2^p, \frac{p^p}{(p-1)^{p-1}}\right), ~ H(p, \gamma)=\max\left(p-1, 2^{-\frac{p\gamma}{p-1}}\right).
\end{gather}

Consequently, with $A>0, p>1$ fixed, there exist $u_{02}>u_{01}>0$ such that whenever $u_0<u_{01}$, $\lim_{\gamma\to 0^+}T_b=\infty$, while $u_0>u_{02}$ implies $\lim_{\gamma\to 0^+}T_b=0$.
\end{thm}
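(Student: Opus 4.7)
The strategy is to extract both bounds from Proposition \ref{pro:bound} by plugging specific test values of $r$ (and $m$ for the upper bound) into the supremum/infimum there, and then use the resulting closed-form estimates to analyze the $\gamma\to 0^+$ limit.

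For the lower bound I would evaluate $\sup_{r>1}\frac{(r^{\gamma}-1)^{1/\gamma}}{r(r^{p-1}-1)}$ at two natural candidates. Substituting $r=2^{1/\gamma}$ gives $r^{\gamma}-1=1$ and $r(r^{p-1}-1)\le r\cdot r^{p-1}=2^{p/\gamma}$, producing the constant $2^p$. Substituting $r=(p/(p-1))^{1/\gamma}$ gives $r^{\gamma}-1=1/(p-1)$ and $r(r^{p-1}-1)\le r^p=(p/(p-1))^{p/\gamma}$, which yields $p^p/(p-1)^{p-1}$. Taking the minimum of the two produces $G(p)$.

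For the upper bound I would plug two choices of $(r,m)$ into the infimum in Proposition \ref{pro:bound}. Letting $m\to\infty$ with $r>1$ fixed sends the first summand to $0$ and the second to $(p-1)^{-1/\gamma}$, yielding the factor $p-1$. For the second choice I take $m=1$ and $r=2^{1/(p-1)}$, so that the first summand equals $2^{1/(p-1)}$ and the second equals $\bigl((1-2^{-\gamma})/(p-1)\bigr)^{1/\gamma}$. The main obstacle is showing this second summand is also at most $2^{1/(p-1)}$, equivalently $1-2^{-\gamma}\le (p-1)\cdot 2^{\gamma/(p-1)}$. I would prove this via the chain $1-2^{-\gamma}\le \gamma\log 2\le (p-1)\cdot 2^{\gamma/(p-1)}$: the first step uses concavity of $x\mapsto 1-2^{-x}$ at $0$, and the second follows from $e^x\ge 1+x$ applied to $x=\gamma(\log 2)/(p-1)$, giving $(p-1)\cdot 2^{\gamma/(p-1)}\ge (p-1)+\gamma\log 2\ge \gamma\log 2$. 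The sum is then at most $2^{p/(p-1)}$, giving the factor $2^{-p\gamma/(p-1)}$; taking the maximum over the two choices produces $H(p,\gamma)$.

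For the asymptotic consequence I would set $u_{01}=(AG(p))^{-1/(p-1)}$ and $u_{02}=(A\max(p-1,1))^{-1/(p-1)}$. A short check confirms $G(p)>\max(p-1,1)$ for every $p>1$ (both $2^p$ and $p^p/(p-1)^{p-1}$ dominate $\max(p-1,1)$ on this range, the latter since $p^p>(p-1)^p\ge (p-1)\max(1,(p-1)^{p-2})$), so $u_{02}>u_{01}>0$. If $u_0<u_{01}$, then $Au_0^{p-1}G(p)<1$, and since $\Gamma(1+\gamma)\to 1$, the base inside the lower-bound expression $\bigl(\Gamma(1+\gamma)/(Au_0^{p-1}G(p))\bigr)^{1/\gamma}$ is eventually greater than $1$ by a fixed amount, so the $1/\gamma$-th power forces $T_b\to\infty$. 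Symmetrically, if $u_0>u_{02}$, then $Au_0^{p-1}\max(p-1,1)>1$; since also $H(p,\gamma)\to\max(p-1,1)$ as $\gamma\to 0^+$, the base in the upper-bound expression is eventually bounded above by a constant strictly less than $1$, and its $1/\gamma$-th power forces $T_b\to 0$.
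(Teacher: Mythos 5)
Your proposal is correct and follows essentially the same route as the paper's proof: both bounds are read off from Proposition \ref{pro:bound} using the test values $r=2^{1/\gamma}$ and $r=(p/(p-1))^{1/\gamma}$ for the lower bound, a degenerate limit giving $(p-1)^{-1/\gamma}$ plus the choice $(m,r)=(1,2^{1/(p-1)})$ for the upper bound, and the same $u_{01},u_{02}$ for the $\gamma\to0^{+}$ limits. (The paper sends $r\to\infty$ with $m>1/(p-1)$ fixed rather than $m\to\infty$, and controls the second summand by a calculus minimization of $Q(p)=2^{p/(p-1)}\left(\frac{p-1}{2^{\gamma}-1}\right)^{1/\gamma}$ instead of your chain $1-2^{-\gamma}\le\gamma\log 2\le(p-1)2^{\gamma/(p-1)}$; both are valid and yield the same constant $2^{p/(p-1)}$.) One small slip in a side remark: in your check that $p^p/(p-1)^{p-1}>\max(p-1,1)$, the step $(p-1)^p\ge(p-1)\max\left(1,(p-1)^{p-2}\right)$ fails for $1<p<2$ (there $(p-1)^p<(p-1)^{p-1}$), but the needed fact is immediate from $p^p/(p-1)^{p-1}=p\left(\frac{p}{p-1}\right)^{p-1}>p>\max(p-1,1)$, so nothing structural is affected.
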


\begin{proof}
For the lower bound, picking $r=2^{1/\gamma}>1$, we find 
\[
 \sup_{r>1}\frac{(r^{\gamma}-1)^{1/\gamma}}{r^p-r}\ge \frac{1^{1/\gamma}}{2^{p/\gamma}-2^{1/\gamma}}\ge \frac{1}{2^{p/\gamma}}.
\]
Similarly, picking $r=(p/(p-1))^{1/\gamma}$ yields\[
\sup_{r>1}\frac{(r^{\gamma}-1)^{1/\gamma}}{r^p-r}\ge \left(\frac{(p-1)^{p-1}}{p^p}\right)^{1/\gamma}.
\]

For the upper bound, we fix $m>\frac{1}{p-1}$, and let $r\to\infty$:
\[
\frac{r^p}{r^{(m+1)(p-1)}-r^{m(p-1)}}+\left(\frac{1-r^{m\gamma(1-p)}}{p-1}\right)^{1/\gamma}\to \left(\frac{1}{p-1}\right)^{1/\gamma}.
\] 
If instead we choose $m=1$ and $r=2^{1/(p-1)}>1$, we have 
\[
\frac{r^p}{r^{(m+1)(p-1)}-r^{m(p-1)}}+\left(\frac{1-r^{m\gamma(1-p)}}{p-1}\right)^{1/\gamma}
=\frac{1}{2}2^{\frac{p}{p-1}}+\frac{1}{2}\left(\frac{2^{\gamma}-1}{p-1}\right)^{1/\gamma}.
\]
Consider $Q(p):=2^{\frac{p}{p-1}}\left(\frac{p-1}{2^{\gamma}-1}\right)^{1/\gamma}$. By elementary calculus, we have
\[
Q'(p)=Q(p)\left(\log Q(p)\right)'=Q(p)\frac{1}{(p-1)^2}\frac{1}{\gamma}(p-1-\gamma\log 2).
\]
 Hence, \[
Q(p)\ge Q(\gamma\log 2+1)
=2\left(\frac{\gamma e\log 2}{2^{\gamma}-1}\right)^{1/\gamma}\ge 2(e\log 2)^{1/\gamma}\ge 2.
\]
For the second inequality, note that $\gamma-2^{\gamma}+1$ is concave on $(0,1)$ and equals zero at $\gamma=0,1$, so $\gamma>2^{\gamma}-1$ for $\gamma\in (0,1)$.
We find 
\[
T_b\le \frac{1}{2}2^{\frac{p}{p-1}}+\frac{1}{2}\left(\frac{2^{\gamma}-1}{p-1}\right)^{1/\gamma}\le \frac{3}{4}2^{\frac{p}{p-1}}<2^{\frac{p}{p-1}},
\]
and the upper bound follows.

As long as we have these two bounds, it is clear that we can pick 
\[
u_{01}=A^{-\frac{1}{p-1}}\max\left(2^{-\frac{p}{p-1}}, \frac{p-1}{p^{p/(p-1)}}\right),~u_{02}=A^{-\frac{1}{p-1}}\min\left(1, \Big(\frac{1}{p-1}\Big)^{1/(p-1)}\right).
\]
\end{proof}

\begin{rmk}
From Theorem \ref{buT}, one can clearly see how the memory plays the role. The memory is getting stronger as $\gamma$ goes closer to $0$. When $u_0$ is very small, the strong memory defers the blowup. If $u_0$ is large, the strong memory accelerates the blowup. For the critical value of $u_0$, we believe it is determined by the limiting case $\gamma=0$:
\[
D_c^{\gamma}u=u-u_0=Au^p.
\]
If $u_0>\frac{p-1}{p}\left(\frac{1}{pA}\right)^{1/(p-1)}$, this algebraic equation has no solution and it means the blow-up time is zero.
If $u_0<\frac{p-1}{p}\left(\frac{1}{pA}\right)^{1/(p-1)}$, there is a constant solution for $t>0$ which means the blow-up time is infinity.
\end{rmk}
\begin{rmk}
The estimates $\frac{(p-1)^{p-1}}{p^p}$ and $(\frac{1}{p-1})^{1/\gamma}$ for the blow-up time can also be obtained by the results in \cite{roberts1996growth} for the Volterra integral equations, but we have better constants here. One may observe the following two facts:
\begin{itemize}
\item $p\geq 2$ if and only if $\frac{(p-1)^{p-1}}{p^p}\geq \frac{1}{2^p}$. Hence for $p\in (1,2)$, $G(p)=2^p$ in \eqref{eq:blowconst} while for $p\in (2,\infty)$, $G(p)=\frac{p^p}{(p-1)^{p-1}}$. The latter gives asymptotic behavior for large $p$. 
\item For the upper bound, if $p<2$, as $\gamma$ is small enough,  $2^{-\frac{\gamma p}{p-1}}>p-1$ and $H(p, \gamma)=2^{-\frac{\gamma p}{p-1}}$  in \eqref{eq:blowconst} while for large $p$, $H(p,\gamma)=p-1$ and it gives the asymptotic bound for large $p$.
\end{itemize}
\end{rmk}

\begin{rmk}\label{rate}
One may wonder the asymptotic behavior, or so-called growth rate of the solution near the blow-up time. There are a lot of references about this topic. One can check, for instance \cite{roberts1993volterra,roberts1996growth}. To find the correct power of the blow-up profile, one can plug $\frac{1}{(T-t)^{\alpha}}$ into \eqref{eq:ode2} and use the heuristic calculation $D_c^{\gamma}(\frac{1}{(T-t)^{\alpha}})\approx \frac{1}{(T-t)^{\alpha+\gamma}}$, which means $\alpha+\gamma=p\alpha$, or $\alpha=\frac{\gamma}{p-1}$. In fact, from (3.2) in \cite{roberts1996growth}, the solution to \eqref{eq:ode2} satisfies
\begin{equation}\label{ratea}
u(t)-u_0\sim \left[\frac{\Gamma(\frac{p\gamma}{p-1})}{A\Gamma(\frac{\gamma}{p-1})}\right]^{\frac{1}{p-1}}\left((T_b-t)^{-1}-T_b^{-1}\right)^{\frac{\gamma}{p-1}},\mbox{ as }t\rightarrow T_b^-. 
\end{equation}
One can find the proof in the appendix (Section \ref{sec:app}). In addition, as in \cite{roberts1993volterra,roberts1996growth}, one can expect explicit asymptotic behavior for more general $f(u)$. 
\end{rmk}

\subsection{Other cases}
In this subsection, we discuss other choices of the parameters $A$ and $p$ in \eqref{eq:ode2}.

First of all, we investigate the cases $A>0$ and $p<0$.
\begin{thm}
Let $A>0, p< 0$ and $u_0>0$, and $u(t)$ be the solution to \eqref{eq:ode2}. Then, $u$ exists globally on $(0,\infty)$ and is increasing. Moreover, 
\[
u_0\le u(t)\le u_0+Au_0^{p}\frac{1}{\Gamma(1+\gamma)}t^{\gamma}.
\]
\end{thm}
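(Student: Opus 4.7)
The plan is to leverage the monotonicity theorem already proved (Theorem \ref{thm:mon}) and then use the integral formulation \eqref{eq:vol} to bound $u$ from above, thereby ruling out finite-time blow-up.

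First I would verify the hypotheses of Theorem \ref{thm:mon} on the interval $(\alpha,\beta)=(0,\infty)$. Here $f(u)=Au^p$ with $p<0$, so $f\in C^1(0,\infty)$ with $f'(u)=Apu^{p-1}$, and $f''(u)=Ap(p-1)u^{p-2}$ is continuous on $(0,\infty)$, which makes $f'$ locally Lipschitz on any compact subinterval of $(0,\infty)$. Proposition \ref{pro:exist} supplies a unique solution on $[0,T_b)$, and since $f(u_0)=Au_0^p>0$, Theorem \ref{thm:mon} gives that $u$ is \emph{strictly increasing} on $(0,T_b)$. In particular $u(t)\geq u_0>0$ for all $t\in[0,T_b)$, which is the desired lower bound.

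Next I would pass to the Volterra equation \eqref{eq:vol}:
\[
u(t)=u_0+\frac{1}{\Gamma(\gamma)}\int_0^t(t-s)^{\gamma-1}A\,u(s)^p\,ds.
\]
Because $p<0$ and $u(s)\geq u_0$ on $[0,T_b)$, we have the pointwise bound $u(s)^p\leq u_0^p$ on $[0,T_b)$. Substituting into the integral and evaluating the beta-type integral yields
\[
u(t)\leq u_0+\frac{Au_0^p}{\Gamma(\gamma)}\int_0^t(t-s)^{\gamma-1}\,ds
=u_0+\frac{Au_0^p}{\Gamma(1+\gamma)}\,t^{\gamma},
\]
which is exactly the claimed upper bound.

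Finally, I would rule out a finite $T_b$. According to Proposition \ref{pro:exist}, a finite $T_b$ would require either $\liminf_{t\to T_b^-}u(t)=0$ or $\limsup_{t\to T_b^-}u(t)=\infty$. The first is impossible since $u\geq u_0>0$ on $[0,T_b)$, and the second is impossible because the upper bound above gives $u(t)\leq u_0+\frac{Au_0^p}{\Gamma(1+\gamma)}T_b^{\gamma}<\infty$. Hence $T_b=\infty$ and the solution exists globally. I do not anticipate any serious obstacle here; the only technical subtlety is making sure to invoke the strict monotonicity first (so the lower bound $u(s)\geq u_0$ is available) before using it inside the integral to control $u(s)^p$, which reverses direction because $p<0$.
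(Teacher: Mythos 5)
Your proof is correct, and it follows the same overall strategy as the paper (monotonicity from Theorem \ref{thm:mon}, lower bound $u\ge u_0$, upper bound by comparison with the constant nonlinearity $Au_0^p$, hence no blow-up), but the technical execution differs in two small ways. First, the paper handles the singularity of $Au^p$ at $u=0$ by replacing $f$ with a truncated function $\tilde f$ that is constant below $u_0$, solving the modified equation, and then showing the modified solution never drops below $u_0$ so that it coincides with the original one; you instead apply Proposition \ref{pro:exist} and Theorem \ref{thm:mon} directly on the interval $(\alpha,\beta)=(0,\infty)$, where $Au^p$ is $C^1$ with locally Lipschitz derivative, which is cleaner and equally valid since the existence theory is stated for an arbitrary open interval. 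Second, for the upper bound the paper invokes Corollary \ref{cor:g} to compare with the solution of $D_c^{\gamma}w=Au_0^p$, whereas you bound $u(s)^p\le u_0^p$ pointwise inside the Volterra equation \eqref{eq:vol} and integrate; your route is more elementary and avoids having to check the hypotheses of the comparison corollary on the right interval. Your concluding step ruling out both alternatives of Proposition \ref{pro:exist} ($\liminf u\to 0$ is blocked by the lower bound, $\limsup u\to\infty$ by the upper bound) is exactly what is needed and is, if anything, spelled out more carefully than in the paper.
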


\begin{proof}
We define
\[
\tilde{f}(u)=
\begin{cases}
Au^p, & u\ge u_0,\\
Au_0^p, & u<u_0.
\end{cases}
\] 
Consequently, $\tilde{f}$ is locally Lipschitz. 

By Proposition \ref{pro:exist}, $D_c^{\gamma}v=\tilde{f}(v)$ has a unique solution $v$ with an interval of existence $[0, T_b)$. Clearly, $\tilde{f}(v(t))>0$ on $[0, T_b)$, and consequently $v(t)>u_0$ for $t\in (0, T_b)$. This implies that $v$ is actually the solution to $D_c^{\gamma}u=Au^p$ on $[0, T_b)$. We therefore identify $v$ with $u$.  The monotonicity of $u$ follows from Theorem \ref{thm:mon}. 

We compare $u$ with the solution of $D_c^{\gamma}w=Au_0^{p}, ~w(0)=u_0$ using Corollary \ref{cor:g} and find that
\[
u(t)\le w(t)=u_0+Au_0^{p}\frac{1}{\Gamma(1+\gamma)}t^{\gamma}.
\]
This implies that $T_b=\infty$.
\end{proof}

We now consider $A<0$. 
\begin{pro}\label{pro:negA}
Let $A<0$, $u_0>0$ and $p\in \mathbb{R}$. There exists $T_b\in (0, \infty]$, such that \eqref{eq:ode2} has a unique solution $u$ on $(0, T_b)$. Moreover, $0<u(t)< u_0$ and it is decreasing on $(0, T_b)$, satisfying
\begin{gather}
\lim_{t\to T_b^-}u(t)=0.
\end{gather}
\end{pro}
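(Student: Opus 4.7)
The overall plan is to combine the existence/uniqueness framework of Proposition \ref{pro:exist}, the monotonicity criterion of Theorem \ref{thm:mon}, and a direct asymptotic argument on the Volterra form \eqref{eq:vol}.

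First, since $f(u)=Au^p$ is $C^\infty$ on $(0,\infty)$ and $u_0\in(0,\infty)$, $f$ is locally Lipschitz on the interval $(\alpha,\beta)=(0,\infty)$. Proposition \ref{pro:exist} then supplies a unique continuous strong solution $u$ on some maximal interval $[0,T_b)$, with either $T_b=\infty$ or $\liminf_{t\to T_b^-}u(t)=0$ or $\limsup_{t\to T_b^-}u(t)=\infty$. Because $f\in C^1(0,\infty)$ with $f'$ locally Lipschitz and $f(u_0)=Au_0^p<0$, Theorem \ref{thm:mon} applies and gives strict monotone decrease of $u$ on $(0,T_b)$. In particular, $u(t)<u_0$ for $t\in(0,T_b)$, and by construction of the existence interval $u(t)>0$ on $[0,T_b)$, which is the bound $0<u(t)<u_0$ claimed.

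Next, monotonicity and the lower bound $u>0$ guarantee that $L:=\lim_{t\to T_b^-}u(t)$ exists with $0\le L<u_0$. I would split into cases. If $T_b<\infty$, then by the trichotomy of Proposition \ref{pro:exist} we must have $\liminf_{t\to T_b^-}u(t)=0$ (the other alternative $\limsup u(t)=\infty$ is ruled out since $u\le u_0$), and monotonicity upgrades this to $L=0$. If $T_b=\infty$, I would argue by contradiction: assume $L>0$, so $u(t)\in[L,u_0]$ for all $t\ge 0$. On this compact subinterval of $(0,\infty)$, $f$ is continuous and strictly negative, so there is a constant $c>0$ with $f(u(t))\le -c$ for all $t\ge 0$. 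Plugging into \eqref{eq:vol} gives
\[
u(t)=u_0+\frac{1}{\Gamma(\gamma)}\int_0^t(t-s)^{\gamma-1}f(u(s))\,ds \le u_0-\frac{c}{\Gamma(1+\gamma)}\,t^{\gamma}\longrightarrow -\infty
\]
as $t\to\infty$, contradicting $u(t)>0$. Hence $L=0$ in both cases.

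The only conceptually delicate point is the $T_b=\infty$ case, since Proposition \ref{pro:exist} by itself does not detect asymptotic decay to the boundary of $(0,\infty)$; the Volterra representation \eqref{eq:vol} together with the sign of $f$ on $[L,u_0]$ is what forces $L=0$. One subtlety to be careful about is the sign of $p$: when $p<0$, $u^p$ could in principle be large near $u=0$, but the assumption $L>0$ bounds $u(t)^p$ above by $\max(L^p,u_0^p)$, so the constant $c>0$ in the lower bound $-f(u(t))\ge c$ is indeed uniform. Everything else, including the positivity on the existence interval and the strict inequality $u(t)<u_0$, follows immediately from Theorem \ref{thm:mon} and the definition of $T_b$, so no substantial additional work is needed beyond the short argument above.
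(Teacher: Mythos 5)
Your proposal is correct and follows essentially the same route as the paper: existence/uniqueness from Proposition \ref{pro:exist}, strict monotone decrease from Theorem \ref{thm:mon} together with the Volterra form \eqref{eq:vol}, and, in the case $T_b=\infty$, the same contradiction argument showing $u(t)\to-\infty$ if $u$ were bounded below by a positive constant. The only quibble is cosmetic: to get $-f(u(t))\ge c$ you want $u(t)^p$ bounded \emph{below} by $\min(L^p,u_0^p)>0$ on $[L,u_0]$, not above by the maximum, but your displayed argument already uses the correct bound.
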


\begin{proof}
Since since $Au^p$ ($p\in\mathbb{R}$) is locally Lipschitz on $(0, \infty)$, applying Proposition \ref{pro:exist} for the interval $(\alpha, \beta)=(0, \infty)$,  \eqref{eq:ode2} has a unique solution $u$ with $u(0)=u_0>0$ on $(0, T_b)$.

Since $f(u)=Au^p$ ($A<0, p\in\mathbb{R}$) is smooth on $(0,\infty)$ and $f(u_0)\neq 0$, by Theorem \ref{thm:mon}, $u$ is strictly monotone. Using the integral form $\eqref{eq:vol}$, it is clear that $u(t)< u_0$ for $t>0$. Hence $u$ is decreasing. From Proposition \ref{pro:exist}, either $T_b=\infty$ or $T_b<\infty$ and $\lim_{t\to T_b^-}u(t)=0$. To finish the proof, we only need to show that if $T_b=\infty$, $\lim_{t\to T_b}u(t)=0$. Suppose for otherwise $\lim_{t\to T_b}u(t)\neq 0$. Then, $u(t)$ is bounded below by $\delta>0$. 
Then, as $t\to T_b=\infty$, \[
u(t)=u_0-\frac{1}{\Gamma(\gamma)}\int_0^{t}|A| u(s)^p(t-s)^{\gamma-1}ds\le u_0-\frac{1}{\Gamma(\gamma)}\int_0^{t}|A|\min( \delta^p,u_0^p)(t-s)^{\gamma-1}ds  \to -\infty,
\]
which is a contradiction.
\end{proof}

\begin{rmk}
In the case $p<1$, it is possible that $T_b<\infty$ and $Au^p$ is defined on $\mathbb{R}$. The solution may be extended beyond $T_b$. However, $Au^p$ may not be Lipschitz continuous at $u=0$ and it makes the analysis complicated (of course $p=0$ case is trivial and we have $u(t)=u_0+Ag_{\gamma+1}$).
\end{rmk}

In the case $p<1$, $Au^p$ may not be Lipschitz at $u=0$. Hence, for simplicity, we only consider $A<0, p\ge 1$ for further discussion.   Actually, we are able to show:
\begin{thm}\label{thm:slowdecay}
Fix $\gamma\in (0,1)$, $p\ge 1$, $u_0>0$ and $A<0$. Let $u(t)$ be the unique solution to \eqref{eq:ode2} with initial value $u_0$. Then, $u(t)>0, \forall t>0$. $u(\cdot)$ is decreasing and $\lim_{t\to\infty}u(t)=0$. Moreover, there exists $C(u_0,A, p)>0$ such that when $t$ is large enough,
\begin{gather}\label{eq:slowdecay}
u(t)\ge C(u_0,A,p)\frac{t^{-\gamma/p}}{\Gamma(1-\gamma)^{1/p}}.
\end{gather}
\end{thm}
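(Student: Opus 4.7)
The first three assertions---positivity, strict monotonicity, and $u(t)\to 0$---are immediate from Proposition \ref{pro:negA} once I verify $T_b=\infty$. That verification I would carry out by comparing $u$ with the Mittag--Leffler solution $v(t)=u_0 E_\gamma(-|A|u_0^{p-1}t^\gamma)$ of the linearized equation $D_c^\gamma v=-|A|u_0^{p-1}v$. Since $u\in(0,u_0]$ and $p\ge 1$ give $u^p\le u_0^{p-1}u$, subtracting the two integral equations shows $w:=u-v$ satisfies
\[
w(t)+\frac{|A|u_0^{p-1}}{\Gamma(\gamma)}\int_0^t(t-s)^{\gamma-1}w(s)\,ds=h(t),
\]
with $h(t):=\frac{|A|}{\Gamma(\gamma)}\int_0^t(t-s)^{\gamma-1}u(s)(u_0^{p-1}-u(s)^{p-1})\,ds\ge 0$. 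A resolvent identity for the kernel $(t-s)^{\gamma-1}$ based on the positivity of $r_\lambda$ from Lemma \ref{lmm:pos} then yields $w\ge 0$, so $u(t)\ge v(t)>0$ for every finite $t$ and $T_b=\infty$.

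For the decay bound I plan to use a doubling estimate that compares the integral equation \eqref{eq:vol} at times $t$ and $2t$. Because $u$ is decreasing, $u(s)\le u(t)$ on $[t,2t]$, so
\[
\int_t^{2t}(2t-s)^{\gamma-1}u(s)^p\,ds\le \frac{u(t)^p t^\gamma}{\gamma}=\frac{\psi(t)}{\gamma},\qquad \psi(t):=u(t)^p t^\gamma.
\]
At the same time, $(2t-s)/(t-s)\ge 2$ for $s\in[0,t]$ together with $\gamma-1<0$ gives $(2t-s)^{\gamma-1}\le 2^{\gamma-1}(t-s)^{\gamma-1}$, so
\[
\int_0^{t}(2t-s)^{\gamma-1}u(s)^p\,ds\le 2^{\gamma-1}\int_0^t(t-s)^{\gamma-1}u(s)^p\,ds=\frac{2^{\gamma-1}\Gamma(\gamma)(u_0-u(t))}{|A|}.
\]
Inserting both bounds into \eqref{eq:vol} at time $2t$ and rearranging yields
\[
\psi(t)\ge \frac{\Gamma(1+\gamma)}{|A|}\brak*{(1-2^{\gamma-1})u_0+2^{\gamma-1}u(t)-u(2t)}.
\]

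Since $u(t),u(2t)\to 0$ and $1-2^{\gamma-1}>0$ for $\gamma\in(0,1)$, the bracket on the right is eventually at least $(1-2^{\gamma-1})u_0/2$. This gives $\psi(t)\ge c_0^{\,p}$ for an explicit positive constant $c_0$, which is precisely the bound $u(t)\ge c_0 t^{-\gamma/p}$ for all sufficiently large $t$. Using the reflection identity $\Gamma(1+\gamma)\Gamma(1-\gamma)=\pi\gamma/\sin(\pi\gamma)$, I can rewrite the constant as $C(u_0,A,p)/\Gamma(1-\gamma)^{1/p}$ with $C(u_0,A,p)=\bigl(\pi\gamma(1-2^{\gamma-1})u_0/(2|A|\sin(\pi\gamma))\bigr)^{1/p}>0$, matching \eqref{eq:slowdecay}.

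The main obstacle is the step establishing $T_b=\infty$: Proposition \ref{pro:comp} does not apply directly because the linearization $v\mapsto -|A|u_0^{p-1}v$ is decreasing rather than non-decreasing, so the comparison must be pushed through the Volterra resolvent via Lemma \ref{lmm:pos} instead of the standard monotone comparison principle. By contrast, the decay estimate itself is short, relying only on the monotonicity of $u$ and the scaling inequality $(2t-s)^{\gamma-1}\le 2^{\gamma-1}(t-s)^{\gamma-1}$ for the kernel.
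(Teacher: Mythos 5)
Your proof is correct but follows a genuinely different route from the paper's on both halves. For global existence, the paper argues directly on the Volterra equation: choosing times $t_n$ with $u(t_n)=u_0r^{n\gamma}$, it bounds $t_n-t_{n-1}$ from below by a constant times $r^{-(n-1)(p-1)}$ and sums to conclude $T_b=\infty$; you instead build a Mittag--Leffler subsolution $v=u_0E_\gamma(-|A|u_0^{p-1}t^\gamma)$ and push the comparison through the resolvent because the decreasing nonlinearity rules out Proposition \ref{pro:comp}. One caveat on that step: $h\ge 0$ together with $r_\lambda\ge 0$ does \emph{not} by itself give $w=h-r_\lambda*h\ge 0$; you need the extra structure that your $h$ is the kernel $(t-s)^{\gamma-1}$ convolved with the nonnegative function $\phi(s)=u(s)\bigl(u_0^{p-1}-u(s)^{p-1}\bigr)$, whence $h-r_\lambda*h=c\,r_\lambda*\phi\ge 0$ by \eqref{eq:resolvent} --- exactly the mechanism used for $h=\alpha t^{\gamma-1}$ at the end of Lemma \ref{lmm:pos}; with that line added the barrier argument is sound. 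For the decay rate, the paper uses the integrated-by-parts form \eqref{eq:capint}: monotonicity makes the integral term nonpositive, so $\Gamma(1-\gamma)D_c^{\gamma}u(t)\le (u(t)-u_0)t^{-\gamma}$, which yields \eqref{eq:slowdecay} in two lines and makes the $\Gamma(1-\gamma)$ factor appear naturally. Your doubling estimate at times $t$ and $2t$ is a correct alternative that works entirely at the level of the integral equation \eqref{eq:vol} and needs no $C^1$ regularity of $u$, at the cost of a $\gamma$-dependent prefactor $\bigl(\pi\gamma(1-2^{\gamma-1})/\sin(\pi\gamma)\bigr)^{1/p}$, which is harmless since it is bounded between positive absolute constants for $\gamma\in(0,1)$ and $\gamma$ is fixed in the statement.
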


\begin{proof}
First of all, by Proposition \ref{pro:negA}, $u$ is decreasing. Pick $r\in (0,1)$. 
 By the fact $\lim_{t\to T_b^-}u(t)=0$, we are able to pick disjoint intervals $J_n=(t_{n-1}, t_n)$ such that $u$ stays between $u(t_{n-1})$ and $u(t_n)$ inside $J_n$ and $u(t_m)=u_0r^{m\gamma}$ . 
 
Therefore, 
\begin{multline*}
u(t_n)=u_0-\frac{1}{\Gamma(\gamma)}\int_0^{t_{n-1}}(t_n-s)^{\gamma-1}|A|u(s)^pds-\frac{1}{\Gamma(\gamma)}\int_{t_{n-1}}^{t_n}
(t_n-s)^{\gamma-1}|A|u(s)^pds\\
\ge u(t_{n-1})-C_1(u(t_{n-1}))^p |J_n|^{\gamma}.
\end{multline*}
This implies that \[
|J_n|\ge C_2(r, u_0, p,\gamma) r^{-(n-1)(p-1)}.
\]
As a result, \[
T_b\ge \sum_{n} |J_n|=\infty.
\]
It follows that for any $t>0$, $u(t)>0$, $u$ is decreasing and \[
\lim_{t\to\infty}u(t)=0.
\]

Since $u\in C^1(0, T_b)\cap C^0[0, T_b)$,  integrating by parts from \eqref{eq:captra} gives us the alternative expression for Caputo derivative \eqref{eq:capint}. Now, $u(t)\le u(s)$ for all $s\le t$ and as a result,
 \[
\Gamma(1-\gamma)D_c^{\gamma}u(t) \le (u(t)-u(0))t^{-\gamma}.
\]
When $t$ is large enough,
\[
u^p(t)\ge \frac{u_0}{2A}\frac{t^{-\gamma}}{\Gamma(1-\gamma)}.
\]
\end{proof}

\begin{rmk}
The proof of Theorem \ref{thm:slowdecay} is quite indirect.  The equation may be rewritten as \[
u(t)+|A| g_{\gamma}*(\theta(t)u^p)=u_0
\]
and $|A|u^p$ is an $m$-accretive operator (see \cite{cn81}) of $u$ when $p>1, u>0$. This form is related to the equations studied in \cite{cn81} and may yield some direct proof using functional analysis. In the case that the kernel is not $L^1$, \cite{cn81} requires that $m$-accretive operator to be coercive which does not apply here. 
\end{rmk}

\begin{rmk}
It is well known that $\gamma=1$ yields $u(t)\sim Ct^{-1/(p-1)}$, which decays to zero faster than $t^{-\gamma/p}$. The memory really gives a slow decaying rate.  As $\gamma\to 1$, $\Gamma(1-\gamma)\to\infty$ and the dominant term in \eqref{eq:slowdecay} vanishes. This means $t^{-1/(p-1)}$ must appear in the next order and the slow decaying dominate term \eqref{eq:slowdecay} is an effect of memory. 
\end{rmk}

\begin{rmk}
Regarding the asymptotic behavior of Caputo derivative, we may consider the derivative of $(1+t)^p$. If $p>0$, \[
D_c^{\gamma}(1+t)^p \sim Cp t^{p-\gamma}, ~ t\to\infty
\]
since $(1+t)^p$ is smooth and one can use \eqref{eq:captra} to compute. In the decaying cases $p<0$, \[
D_c^{\gamma}(1+t)^p\sim Cp t^{-\gamma}, ~ t\to\infty.
\]
This means no matter how fast the function decays, the Caputo derivative is always like $-Ct^{-\gamma}$ asymptotically, which can also be confirmed through the proof of Theorem \ref{thm:slowdecay}. 

Actually, $-t^{-\gamma}$ should be the intrinsic rate for the Caputo derivative of decaying functions. If, for example, $D_c^{\gamma}u(t) \le -C(1+t)^{-\gamma+\delta}$ for some $C>0$ and $\delta>0$, then $u(t)\to-\infty$. 
Conversely, if $D_c^{\gamma}u(t) \sim -(1+t)^{-\gamma-\delta}$, then $u$, though is less than $u_0$, will eventually go back to $u_0$. Notice that though the Caputo derivative is negative, the function does not always decay. This is because the decaying property at the earlier stage lingers to later stage due to memory.
\end{rmk}

\section{Discrete equations and numerical simulations}\label{sec:discrete}

In this section, we study discrete equations obtained from discretizing the differential equation  \eqref{eq:ode1} or the integral equation \eqref{eq:vol}. We will consider some typical numerical schemes which are useful in different situations (e.g. stability analysis for numerical schemes or the proof of existence of weak solutions to fractional PDEs). 

In this section,  $k>0$ is the time step, and $t_n=nk$. $u^n$ is the computed numerical value at $t_n$ and $u(t_n)$ is the value of the solution to \eqref{eq:ode1} evaluated at $t_n$. 

\subsection{Schemes for the integral equation}

Consider discretizing \eqref{eq:vol} with explicit schemes. We have
\begin{gather}\label{eq:schemeInt}
u^{n}=u_0+\frac{k^{\gamma}}{\Gamma(1+\gamma)}\sum_{m=0}^{n-1} f(u^{m})\Big((n-m)^{\gamma}-(n-m-1)^{\gamma}\Big)
\end{gather}
or
\begin{gather}\label{eq:schemeInt2}
u^{n}=u_0+\frac{k^{\gamma}}{\Gamma(\gamma)}\sum_{m=0}^{n-1} f(u^{m})(n-m)^{\gamma-1}.
\end{gather}

To study these two schemes, we first prove the following discrete Gr\"{o}nwall inequalities:
\begin{lmm}\label{lmm:discretegronwall1}
Let $f(u)$ be nonnegative, non-decreasing, locally Lipschitz on $[0,\infty)$ and let $u_0>0$. 
Suppose $w^n$ ($0\le n\le N$) is a nonnegative sequence ($w^n\ge 0$) such that 
\[
w^{n}\le u_0+\frac{k^{\gamma}}{\Gamma(1+\gamma)}\sum_{m=0}^{n-1} f(w^{m})\Big((n-m)^{\gamma}-(n-m-1)^{\gamma}\Big),
~ 0\le n\le N,
\] 
or 
\[
w^{n}\le u_0+\frac{k^{\gamma}}{\Gamma(\gamma)}\sum_{m=0}^{n-1} f(w^{m})(n-m)^{\gamma-1},~  0\le n\le N,
\]
then \[
w^n \le u(nk),~ 0\le n\le N,~n<T_b/k,
\]
where $u(t)$ is the unique solution to the fractional ODE \eqref{eq:ode1} with initial condition $u(0)=u_0$.
\end{lmm}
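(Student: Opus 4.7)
The plan is to prove both inequalities by induction on $n$. The base case $n=0$ is immediate, since the sum is empty and the hypothesis forces $w^{0}\le u_{0}=u(0)$. For the inductive step, I assume $w^{m}\le u(t_{m})$ for every $0\le m<n$; the monotonicity of $f$ then gives $f(w^{m})\le f(u(t_{m}))$, so it suffices to show
\[
u_{0}+\frac{k^{\gamma}}{\Gamma(1+\gamma)}\sum_{m=0}^{n-1}f(u(t_{m}))\bigl((n-m)^{\gamma}-(n-m-1)^{\gamma}\bigr)\le u(t_{n}),
\]
and likewise for the second form. The ingredient that makes this work is that the \emph{exact} solution $u$ is non-decreasing on $[0,T_{b})$. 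When $f(u_{0})>0$ this is Lemma \ref{lmm:toinfty}. When $f(u_{0})=0$, uniqueness forces $u\equiv u_{0}$, and a parallel induction (using $f\ge 0$ and $f(w^{m})\le f(u_{0})=0\Rightarrow f(w^{m})=0$) yields $w^{n}\le u_{0}=u(t_{n})$.

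Assuming $u$ is non-decreasing, I would rewrite the discrete weight in the first scheme as the exact integral of the kernel, namely
\[
\frac{k^{\gamma}}{\Gamma(1+\gamma)}\bigl((n-m)^{\gamma}-(n-m-1)^{\gamma}\bigr)=\frac{1}{\Gamma(\gamma)}\int_{mk}^{(m+1)k}(t_{n}-s)^{\gamma-1}\,ds.
\]
Since $f\circ u$ is non-decreasing, $f(u(t_{m}))\le f(u(s))$ for $s\in[mk,(m+1)k]$; summing over $m$ collapses the Riemann sum into the Volterra integral and produces $u(t_{n})-u_{0}$ via \eqref{eq:vol}. Combining with the inductive hypothesis closes the induction for scheme \eqref{eq:schemeInt}.

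For the second scheme, the idea is the same but with a slightly coarser kernel bound. Using $\gamma-1<0$, one has $(t_{n}-s)^{\gamma-1}\ge((n-m)k)^{\gamma-1}$ for $s\in[mk,(m+1)k]$, together again with $f(u(t_{m}))\le f(u(s))$ on the same interval. Hence
\[
\frac{k^{\gamma}}{\Gamma(\gamma)}\sum_{m=0}^{n-1}f(u(t_{m}))(n-m)^{\gamma-1}\le\frac{1}{\Gamma(\gamma)}\int_{0}^{t_{n}}(t_{n}-s)^{\gamma-1}f(u(s))\,ds=u(t_{n})-u_{0},
\]
which gives $w^{n}\le u(t_{n})$ and completes the induction.

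There is no serious obstacle; the only conceptual point is recognizing that the two discrete weights in \eqref{eq:schemeInt}–\eqref{eq:schemeInt2} are, respectively, the exact integral and a lower-endpoint underestimate of the continuous kernel $\frac{1}{\Gamma(\gamma)}(t_{n}-s)^{\gamma-1}$ on each subinterval $[mk,(m+1)k]$, so that monotonicity of $u$ (Lemma \ref{lmm:toinfty}) immediately converts the discrete sum into an upper bound by the integral equation \eqref{eq:vol}. The only mild care is the degenerate case $f(u_{0})=0$, which is handled separately by noting that the solution is constant.
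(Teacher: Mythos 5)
Your proof is correct and follows essentially the same route as the paper's: induction on $n$, identifying the two discrete weights as the exact integral and a lower-endpoint underestimate of $\frac{1}{\Gamma(\gamma)}(t_n-s)^{\gamma-1}$ on each subinterval, and using the monotonicity of $u$ from Lemma \ref{lmm:toinfty} to dominate the Riemann sum by the Volterra integral \eqref{eq:vol}. Your separate treatment of the degenerate case $f(u_0)=0$ is a small but welcome refinement, since Lemma \ref{lmm:toinfty} as stated assumes $f(u_0)>0$ while the paper's proof invokes it without comment.
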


\begin{proof}
We prove by induction. $n=0$ is clearly true. Now, let $1\le n \le N$ and assume that $w^m\le u(mk)$ for all $m\le n-1$. Then, by the non-decreasing property of $f$ and the induction assumption, we have
\begin{multline*}
w^{n}\le u_0+\frac{k^{\gamma}}{\Gamma(1+\gamma)}\sum_{m=0}^{n-1} f(w^{m})\Big((n-m)^{\gamma}-(n-m-1)^{\gamma}\Big)\\
\le u_0+\frac{k^{\gamma}}{\Gamma(1+\gamma)}\sum_{m=0}^{n-1} f(u(mk))\Big((n-m)^{\gamma}-(n-m-1)^{\gamma}\Big)\\
=u_0+\frac{1}{\Gamma(\gamma)}\sum_{m=0}^{n-1} \int_{t_{m}}^{t_{m+1}}
(nk-s)^{\gamma-1}f(u(mk))ds.
\end{multline*} 
Since $f(\cdot)$ is non-negative, by Lemma \ref{lmm:toinfty}, $u(\cdot)$ is increasing. As a result,
\begin{multline*}
w^{n}\le u_0+\frac{1}{\Gamma(\gamma)}\sum_{m=0}^{n-1} \int_{t_{m}}^{t_{m+1}}
(nk-s)^{\gamma-1}f(u(mk))ds
\le u_0+\frac{1}{\Gamma(\gamma)}\int_0^{nk}(nk-s)^{\gamma-1}f(u(s))ds=u(t^n).
\end{multline*}

The proof for the other inequality is similar due to the fact \[
\frac{k^{\gamma}}{\Gamma(\gamma)}(n-m)^{\gamma-1}\le \frac{1}{\Gamma(\gamma)} \int_{t_{m}}^{t_{m+1}}
(nk-s)^{\gamma-1} ds.
\]
\end{proof}

This lemma recovers the discrete Gr\"{o}nwall inequality in \cite{dixon86}:
\begin{cor}[\cite{dixon86}]
Let $\{a_n\}$ be a non-negative sequence.  If $\{a_n\}$ satisfies $$
a_n\le B+\frac{\lambda}{\Gamma(\gamma)}k^{\gamma}\sum_{m=0}^{n-1}(n-m)^{\gamma-1}a_m,~0\le n\le N,
$$
where $B>0$ and $\lambda>0$ are independent of $n, k, \gamma$, then, $$
a_n\le u(nk)=BE_{\gamma}(\lambda(nk)^{\gamma}),~0\le n\le N.
$$
Here, $u(t)$ is the solution to $D_c^{\gamma}u=\lambda u$ with initial value $B$ and $E_{\gamma}$ is the Mittag-Leffler function.
\end{cor}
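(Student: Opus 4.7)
The plan is that this corollary is an immediate specialization of Lemma \ref{lmm:discretegronwall1}. First, observe that $f(u) = \lambda u$ with $\lambda > 0$ satisfies all the hypotheses of that lemma on $[0,\infty)$: it is nonnegative (on the relevant range, which is guaranteed since $\{a_n\}$ is nonnegative and the initial datum $B > 0$), nondecreasing, and globally Lipschitz. Setting $u_0 = B$ and applying the second form of the inequality in Lemma \ref{lmm:discretegronwall1} with $w^n = a_n$ yields
\[
a_n \le u(nk), \quad 0\le n\le N,
\]
where $u$ is the unique solution of $D_c^{\gamma} u = \lambda u$ with $u(0) = B$.

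Second, I would identify $u(t)$ explicitly as $BE_\gamma(\lambda t^\gamma)$. Using the series representation
\[
E_\gamma(\lambda t^\gamma) = \sum_{n=0}^{\infty} \frac{\lambda^n t^{n\gamma}}{\Gamma(n\gamma+1)},
\]
and the identity $D_c^{\gamma}(t^{n\gamma}) = \frac{\Gamma(n\gamma+1)}{\Gamma((n-1)\gamma+1)} t^{(n-1)\gamma}$ for $n\ge 1$ (which follows from the group property $g_{n\gamma}*g_{\gamma} = g_{(n+1)\gamma}$ as noted in the remark after Lemma \ref{lmm:regu}), termwise application of $D_c^{\gamma}$ gives
\[
D_c^{\gamma}\bigl[B E_\gamma(\lambda t^\gamma)\bigr] = B\sum_{n=1}^{\infty} \frac{\lambda^n t^{(n-1)\gamma}}{\Gamma((n-1)\gamma+1)} = \lambda B E_\gamma(\lambda t^\gamma),
\]
and $BE_\gamma(0) = B$. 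By uniqueness (Proposition \ref{pro:exist}), $u(t) = B E_\gamma(\lambda t^\gamma)$. Since this function is finite for all finite $t$, we have $T_b = \infty$, so the bound holds for every $n\le N$ regardless of $k$.

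There is no substantive obstacle here: the content of the corollary is essentially that Lemma \ref{lmm:discretegronwall1} gives the optimal comparison function in the linear case, where that comparison function can be written in closed form via the Mittag-Leffler function. The only care required is the termwise differentiation, which is justified because the series for $E_\gamma(\lambda t^\gamma)$ converges locally uniformly together with its formal derivative series.
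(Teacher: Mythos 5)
Your proposal is correct and follows exactly the route the paper intends: the corollary is stated as an immediate consequence of Lemma \ref{lmm:discretegronwall1} applied with $f(u)=\lambda u$, $u_0=B$, $w^n=a_n$, together with the standard identification of the solution of $D_c^{\gamma}u=\lambda u$, $u(0)=B$, as $BE_{\gamma}(\lambda t^{\gamma})$ (the paper gives no separate proof beyond this observation). Your verification of the Mittag--Leffler formula by termwise application of $D_c^{\gamma}$ via the group property, and the remark that $T_b=\infty$ so the bound holds for all $n\le N$, fill in the only details left implicit.
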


We conclude the following stability result about the schemes, which is useful when studying numerical schemes of fractional PDEs.
\begin{pro}\label{pro:intnum}
Let $f(u)$ be nonnegative, non-decreasing, locally Lipschitz on $[0,\infty)$ and $u_0>0$. Suppose $u^n$ solves the numerical scheme \eqref{eq:schemeInt} or \eqref{eq:schemeInt2}, and $u$ is the unique solution to \eqref{eq:ode1} with initial value $u_0$. Then, we have \[
u^{n-1}\le u^n\le u(nk), ~1\le n< T_b/k.
\]
\end{pro}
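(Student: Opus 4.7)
\medskip
\noindent\textbf{Proof plan.} I would split the statement into the monotonicity estimate $u^{n-1}\le u^n$ and the comparison estimate $u^n\le u(nk)$, and tackle them in this order. The comparison part is essentially a direct corollary of Lemma~\ref{lmm:discretegronwall1}: each of the schemes \eqref{eq:schemeInt}, \eqref{eq:schemeInt2} is an equality of exactly the form required by that lemma, so once the sequence $\{u^n\}$ is known to be nonnegative (which follows from $u_0>0$ and $f\ge 0$, or more cleanly from the monotonicity just established), setting $w^n=u^n$ yields $u^n\le u(nk)$ on the admissible range $n<T_b/k$. So the real work is the monotonicity.

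For the monotonicity I would argue by induction on $n$. The base case is immediate: $u^1=u_0+\tfrac{k^\gamma}{\Gamma(1+\gamma)}f(u_0)\ge u_0=u^0$ because $f\ge 0$. Assuming $u^0\le u^1\le\dots\le u^{n-1}$, for scheme \eqref{eq:schemeInt2} I would rewrite
\[
u^n-u^{n-1}=\frac{k^\gamma}{\Gamma(\gamma)}\Big[f(u^{n-1})-\sum_{m=0}^{n-2}f(u^m)\bigl((n-1-m)^{\gamma-1}-(n-m)^{\gamma-1}\bigr)\Big].
\]
Since $x\mapsto x^{\gamma-1}$ is decreasing for $\gamma\in(0,1)$, the weights in the sum are nonnegative and telescope to $1-n^{\gamma-1}\le 1$. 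Combined with $f(u^m)\le f(u^{n-1})$ (from the inductive hypothesis together with the monotonicity of $f$), this gives $u^n\ge u^{n-1}$. The analogous computation for \eqref{eq:schemeInt} relies on the function $g(x)=x^\gamma-(x-1)^\gamma$, which is decreasing for $x\ge 1$: rewriting
\[
u^n-u^{n-1}=\frac{k^\gamma}{\Gamma(1+\gamma)}\Big[f(u^{n-1})-\sum_{m=0}^{n-2}f(u^m)\bigl(g(n-1-m)-g(n-m)\bigr)\Big],
\]
the bracketed weights are again nonnegative and their telescoping sum equals $g(1)-g(n)=1-(n^\gamma-(n-1)^\gamma)\le 1$, so the same Abel-type estimate closes the induction.

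The main obstacle I expect is that step: at first glance $u^n-u^{n-1}$ is a signed sum of $f(u^m)$ and it is not obvious that it is nonnegative. The key observation is that after separating the newest term $f(u^{n-1})$, the remaining coefficient differences have a sign and telescope to something $\le 1$, so the already-proved monotonicity of $u^m$ together with the monotonicity of $f$ allows one to dominate the whole sum by $f(u^{n-1})$. Once this is in place, both claims follow, and the bound $u^n\le u(nk)$ is then an instance of Lemma~\ref{lmm:discretegronwall1} with $w^n=u^n$ on $1\le n<T_b/k$.
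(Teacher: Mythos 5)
Your proposal is correct, and its overall architecture matches the paper's: the bound $u^n\le u(nk)$ is obtained exactly as in the paper, by feeding the scheme (an equality, hence an inequality) into Lemma~\ref{lmm:discretegronwall1}, and the monotonicity is proved by induction. Where you diverge is in how the induction step for $u^{n-1}\le u^n$ is closed. The paper never forms the difference $u^n-u^{n-1}$: it drops the nonnegative $m=0$ term from the sum defining $u^n$, replaces $f(u^m)$ by $f(u^{m-1})$ term by term using the inductive hypothesis and the monotonicity of $f$ (keeping the weights fixed), and then shifts the summation index to recognize $u^{n-1}$. You instead keep the $f$-values and compare the weights: after peeling off the newest term $f(u^{n-1})$, you observe that the weight differences are nonnegative (monotonicity of $x\mapsto x^{\gamma-1}$, resp.\ of $g(x)=x^\gamma-(x-1)^\gamma$) and telescope to something $\le 1$, so the signed sum is dominated by $f(u^{n-1})$ via an Abel-type estimate. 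Both manipulations are elementary and use the same hypotheses ($f\ge 0$, $f$ non-decreasing, inductive monotonicity of $\{u^m\}$); the paper's index shift is marginally shorter and avoids computing the telescoping sum, while your version makes the quantitative gain explicit, namely $u^n-u^{n-1}\ge \frac{k^\gamma}{\Gamma(\gamma)}f(u^{n-1})\,n^{\gamma-1}$ for \eqref{eq:schemeInt2} and $u^n-u^{n-1}\ge \frac{k^\gamma}{\Gamma(1+\gamma)}f(u^{n-1})\bigl(n^\gamma-(n-1)^\gamma\bigr)$ for \eqref{eq:schemeInt}, which is slightly more information than the paper extracts. Your remark that nonnegativity of $\{u^n\}$ (needed to invoke Lemma~\ref{lmm:discretegronwall1}) is immediate from $u_0>0$ and $f\ge 0$ is also correct.
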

\begin{proof}
$u^n\le u(nk)$ follows directly from Lemma \ref{lmm:discretegronwall1}.

Now, let us show that $\{u^{n}\}$ is non-decreasing under the scheme \eqref{eq:schemeInt} by induction. For $n=1$, it is clear that $u^1\ge u_0$ by direct computation. Now, let $n\ge 2$ and assume that $u^m\ge u^{m-1}$ for all $1\le m\le n-1$. 
\begin{multline*}
u^{n}=u_0+\frac{k^{\gamma}}{\Gamma(1+\gamma)}\sum_{m=0}^{n-1} f(u^{m})\Big((n-m)^{\gamma}-(n-m-1)^{\gamma}\Big)\\
\ge u_0+\frac{k^{\gamma}}{\Gamma(1+\gamma)}\sum_{m=1}^{n-1} f(u^{m-1})\Big((n-m)^{\gamma}-(n-m-1)^{\gamma}\Big)\\
= u_0+\frac{k^{\gamma}}{\Gamma(1+\gamma)}\sum_{m=0}^{n-2} f(u^{m})\Big((n-1-m)^{\gamma}-(n-m-2)^{\gamma}\Big)
=u^{n-1}.
\end{multline*} 
The proof for scheme \eqref{eq:schemeInt2} is similar.
\end{proof}

\subsection{Schemes for the differential equation}

Now, let us discretize Equation \eqref{eq:ode1} directly.  We assume the solution is $C^1(0,T_b)$, and use the following first order scheme from \cite{lx07, llx11} based on the explicit formula \eqref{eq:captra}:
\begin{multline*}
D_c^{\gamma}u(t_{n+1})=\frac{1}{\Gamma(1-\gamma)}\sum_{m=0}^n\int_{t_m}^{t_{m+1}}\frac{(u(t_{m+1})-u(t_m))/k}{(t^{n+1}-s)^{\gamma}}ds+O(k^{2-\gamma})\\
=\frac{1}{\Gamma(2-\gamma)k^{\gamma}}\sum_{m=0}^{n}
(u(t_{n+1-m})-u(t_{n-m}))[(m+1)^{1-\gamma}-m^{1-\gamma}]+O(k^{2-\gamma}).
\end{multline*}
Denote
\begin{multline}
(\mathcal{D}_h^{\gamma}u)_{n+1}=\frac{1}{\Gamma(1-\gamma)}\sum_{m=0}^n\int_{t_m}^{t_{m+1}}\frac{(u(t_{m+1})-u(t_m))/k}{(t_{n+1}-s)^{\gamma}}ds\\
=\frac{1}{\Gamma(2-\gamma)k^{\gamma}}\sum_{m=0}^{n}
(u(t_{n+1-m})-u(t_{n-m}))[(m+1)^{1-\gamma}-m^{1-\gamma}]=k^{-\gamma}\sum_{m=0}^{n+1}b^{n+1}_m u(t_{n+1-m}).
\end{multline}
We can determine that for all $n\ge 0$,
\begin{gather}
\begin{split}
&\Gamma(2-\gamma)b_0^{n+1}=1, \\
&\Gamma(2-\gamma)b_m^{n+1}=(m+1)^{1-\gamma}-2m^{1-\gamma}+(m-1)^{1-\gamma},~ 1\le m\le n, \\
&\Gamma(2-\gamma)b_{n+1}^{n+1}=-(n+1)^{1-\gamma}+n^{1-\gamma}.
\end{split}
\end{gather}
It is clear that $b_{m}^{n+1}$ does not depend on $n$ if $m\le n$. Hence, for simplicity, we write \[
\Gamma(2-\gamma)b_m=
\begin{cases}
1,&m=0,\\
(m+1)^{1-\gamma}-2m^{1-\gamma}
+(m-1)^{1-\gamma}<0,~&m\ge 1.
\end{cases}
\]

Using this basic discretization, we can formulate the explicit and implicit schemes respectively as
\begin{gather}\label{eq:diffex}
(\mathcal{D}_h^{\gamma}u)_{n+1}=f(u^n)
\end{gather}
and
\begin{gather}\label{eq:diffim}
 (\mathcal{D}_h^{\gamma}u)_{n+1}=f(u^{n+1}).
\end{gather}

First of all, we discuss the explicit scheme:
\begin{gather}
(\mathcal{D}_h^{\gamma}u)_{n+1}=f(u^n)
\Leftrightarrow b_0u^{n+1}=k^{\gamma}f(u^{n})-\sum_{m=1}^n b_m u^{n+1-m}-b_{n+1}^{n+1}u^0.
\end{gather}

The following result follows from the facts $b_m<0$ for $m\ge 1$ and  $b_n+b_{n+1}^{n+1}=b_n^n$:
\begin{lmm}
Suppose $f(u)$ is nonnegative, non-decreasing on $[0,\infty)$ and $u_0 \ge 0$, then $u^n$ given by the explicit scheme \eqref{eq:diffex} is nondecreasing.
\end{lmm}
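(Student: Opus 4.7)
The plan is to prove by induction on $n$ that the increments $\delta^m := u^m - u^{m-1}$ are non-negative for all $m \ge 1$. I would start by recasting the scheme in terms of these increments. Using the original sum representation of $(\mathcal{D}_h^\gamma u)_{n+1}$, the scheme \eqref{eq:diffex} is equivalent to
\[
\sum_{m=0}^n a_m\,\delta^{n+1-m} = \Gamma(2-\gamma) k^\gamma f(u^n), \qquad a_m := (m+1)^{1-\gamma} - m^{1-\gamma}.
\]
Because $x \mapsto x^{1-\gamma}$ is concave on $[0,\infty)$ for $\gamma \in (0,1)$, $\{a_m\}$ is strictly positive and strictly decreasing; this is equivalent to the preceding observation that $b_m < 0$ for $m \ge 1$. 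Note that the $u^0$ tail term (controlled by $b_{n+1}^{n+1}$) is absorbed automatically in this telescoped form, which is essentially the content of the identity $b_n + b_{n+1}^{n+1} = b_n^n$ cited just before the statement.

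Next, I would subtract the identity above at index $n-1$ (after shifting the summation variable by one) from the identity at index $n$, which produces the key recursion
\[
a_0\,\delta^{n+1} = \Gamma(2-\gamma) k^\gamma \bigl(f(u^n) - f(u^{n-1})\bigr) + \sum_{m=1}^{n} (a_{m-1} - a_m)\,\delta^{n+1-m}.
\]
All the weights on the right are non-negative: $\Gamma(2-\gamma) k^\gamma > 0$ and $a_{m-1} - a_m > 0$ by the concavity observation above.

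The base case $n = 0$ follows from the very first identity, which reduces to $a_0\,\delta^1 = \Gamma(2-\gamma) k^\gamma f(u_0) \ge 0$. For the inductive step, the hypothesis $\delta^1,\ldots,\delta^n \ge 0$ yields $u^n \ge u^{n-1}$, hence $f(u^n) - f(u^{n-1}) \ge 0$ by monotonicity of $f$; every term on the right of the recursion is then non-negative, forcing $\delta^{n+1} \ge 0$. The main --- and essentially only --- obstacle is the bookkeeping in the algebraic shift that produces the clean recursion above; once that is in hand, the induction is immediate and needs no further structural properties of $f$ beyond non-negativity and monotonicity.
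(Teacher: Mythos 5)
Your proof is correct and follows essentially the same route the paper indicates: your weights $a_{m-1}-a_m$ are exactly $-\Gamma(2-\gamma)\,b_m$, and working with the increments $\delta^m$ from the start is precisely what the identity $b_n+b_{n+1}^{n+1}=b_n^n$ accomplishes in the paper's formulation. The induction on the sign of $f(u^n)-f(u^{n-1})$ together with the positivity of these weights is the intended argument, which the paper leaves as a one-line remark.
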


We have the following discrete comparison principles:
\begin{lmm}\label{lmm:discretecompex}
Suppose $f(u)$ is nonnegative, non-decreasing on $[0,\infty)$ and $u_0 \ge 0$. If $\{w^n\}$ is a sequence satisfying $w^0\ge u_0$ and \[
(\mathcal{D}_h^{\gamma}w)_{m+1}\ge f(w^m),~ m\le N-1,
\]
then $w^n\ge u^n$ for $n\le N$. Correspondingly, if $w^0\le u^0$ and \[
(\mathcal{D}_h^{\gamma}w)_{m+1}\le f(w^m),~m\le N-1,
\]
then $w^n\le u^n$ for $n\le N$.
\end{lmm}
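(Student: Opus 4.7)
The plan is to prove both inequalities by induction on $n$, exploiting the sign structure of the coefficients $b_m$ in the discrete operator $(\mathcal{D}_h^\gamma \cdot)_{n+1}$. The key observation, which makes the induction immediate, is that $b_0 = 1/\Gamma(2-\gamma) > 0$, while all the remaining coefficients appearing in the recursion are non-positive: for $m \ge 1$, $b_m$ is proportional to the second difference of the concave function $x^{1-\gamma}$ (since $1-\gamma \in (0,1)$) and is therefore negative, and $b_{n+1}^{n+1}$ is proportional to $-(n+1)^{1-\gamma} + n^{1-\gamma} < 0$.

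First I would rewrite the explicit scheme as a solved-for recursion
\begin{equation*}
b_0 u^{n+1} = k^\gamma f(u^n) - \sum_{m=1}^{n} b_m u^{n+1-m} - b_{n+1}^{n+1} u^0,
\end{equation*}
and likewise rewrite the hypothesis on $\{w^n\}$ as
\begin{equation*}
b_0 w^{n+1} \ge k^\gamma f(w^n) - \sum_{m=1}^{n} b_m w^{n+1-m} - b_{n+1}^{n+1} w^0.
\end{equation*}
The induction hypothesis $w^j \ge u^j$ for all $0 \le j \le n$ together with monotonicity of $f$ gives $f(w^n) \ge f(u^n)$; the fact that $-b_m \ge 0$ for $1 \le m \le n$ gives $-b_m w^{n+1-m} \ge -b_m u^{n+1-m}$; and the fact that $-b_{n+1}^{n+1} \ge 0$ together with $w^0 \ge u^0$ gives $-b_{n+1}^{n+1} w^0 \ge -b_{n+1}^{n+1} u^0$. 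Adding these and dividing by $b_0 > 0$ yields $w^{n+1} \ge u^{n+1}$, closing the induction. The base case $n = 0$ is just the assumption $w^0 \ge u^0 \ge u_0$.

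The reverse inequality is handled by the same argument with all inequalities flipped: assuming $w^j \le u^j$ for $j \le n$, we again use that $f$ is non-decreasing and that $-b_m \ge 0$ and $-b_{n+1}^{n+1} \ge 0$, to conclude $b_0 w^{n+1} \le b_0 u^{n+1}$ and hence $w^{n+1} \le u^{n+1}$. I do not expect any real obstacle here; the only thing worth stating carefully is the sign verification of the $b_m$, which is really just concavity of $x \mapsto x^{1-\gamma}$, and this is already built into the displayed formulas preceding the lemma. The non-negativity assumption $f \ge 0$ on the iterates is not actually needed in the comparison argument itself, but it is consistent with the setting in which $u^n$ and $w^n$ remain in the domain $[0,\infty)$ where $f$ is defined and monotone, so no restriction of the range of the sequences is required beyond what is implicit in the statement.
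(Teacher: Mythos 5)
Your proof is correct and is essentially identical to the paper's: the same rearrangement of the scheme isolating $b_0 u^{n+1}$, the same induction using monotonicity of $f$ and the signs $b_0>0$, $b_m<0$ for $m\ge 1$, $b_{n+1}^{n+1}<0$. Your side remark that the non-negativity of $f$ is not actually used in the comparison step is accurate but does not change the argument.
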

\begin{proof}
We only prove `$\ge$' case while the other case is similar. It follows directly from the following induction inequality
\begin{multline*}
b_0w^{n+1}=k^{\gamma}(\mathcal{D}_h^{\gamma}w)_{n+1}-\sum_{m=1}^n b_m w^{n+1-m}
-b_{n+1}^{n+1}w^0 \\
\ge k^{\gamma}f(w^n)-\sum_{m=1}^n b_m w^{n+1-m}
-b_{n+1}^{n+1}w^0
\ge k^{\gamma}f(u^n)-\sum_{m=1}^n b_m u^{n+1-m}
-b_{n+1}^{n+1}u^0=b_0u^{n+1},
\end{multline*}
since $b_0>0$ and $b_m<0$ for $m\ge 1$.
\end{proof}

We now move on to the implicit scheme \eqref{eq:diffim}, which is given by
\begin{gather}\label{eq:implicit}
(\mathcal{D}_h^{\gamma}u)_{n+1}=f(u^{n+1})
\Leftrightarrow
b_0u^{n+1}-k^{\gamma}f(u^{n+1})=-\sum_{m=1}^n b_m u^{n+1-m}
-b_{n+1}^{n+1}u^0.
\end{gather}
Assume that $f\in C^1[0,\infty)$, non-decreasing and $b_0-k^{\gamma}f'(u_0)>0$. Hence, $f'(z)\ge 0$.
In this case, the implicit scheme is solved by finding the root of  $b_0z-k^{\gamma}f(z)=-\sum_{m=1}^n b_m u^{n+1-m}
-b_{n+1}^{n+1}u^0$ on $[u_0, M]$ where
\[
M:=\sup\{M_0>0: b_0-k^{\gamma}f'(z)\ge 0, \forall z\in [0, M_0] \}.
\]
It is clearly that $\lim_{k\to 0}M=\infty$. Hence, it is sufficient for us to find the numerical solution on $[0, M]$. If there is no root of the scheme on $[0, M]$ for $n=N^*$, then the numerical solution breaks up, and the corresponding time 
\begin{equation}
T_b(k)=N^*k
\end{equation}
 is regarded as numerical blow-up time.  

With this convention, similarly we can show that
\begin{lmm}\label{lmm:discretecompim}
Assume that $f \in C^1[0,\infty)$ is nonnegative, non-decreasing and $u_0 \ge 0$. Then, $\{u^n\}$ given by the implicit scheme \eqref{eq:diffim} is non-decreasing. Moreover, if $w^0\ge u^0$ and \[
(\mathcal{D}_h^\gamma w)_{m}\ge f(w^{m}), ~ m\le N,
\]
then $w^n\ge u^n$ for $n\le N$, $n<N^*$.

Correspondingly, if  $w^0\le u^0$ and \[
(\mathcal{D}_h^\gamma w)_{m}\le f(w^{m}),~ m\le N,
\]
then $w^n\le u^n$ for $n\le N$, $n<N^*$.
\end{lmm}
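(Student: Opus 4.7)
The plan is to adapt the induction used for the explicit scheme in Lemma \ref{lmm:discretecompex} to the implicit case. The new ingredient is that the map $\phi(z):=b_0 z - k^\gamma f(z)$ is non-decreasing on $[0, M]$ by the very definition of $M$; this allows us to invert the scheme. Rewriting the implicit equation \eqref{eq:implicit} as $\phi(u^{n+1}) = R_{n+1}$ with
\[
R_{n+1} := -\sum_{m=1}^n b_m u^{n+1-m} - b_{n+1}^{n+1} u^0,
\]
the facts $b_m < 0$ for $m \geq 1$ and $b_{n+1}^{n+1} < 0$ make $R_{n+1}$ non-decreasing in each of $u^0,\ldots,u^n$; combined with monotonicity of $\phi$ this shows that making the past values larger forces $u^{n+1}$ to be larger.

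First I would prove $u^{n+1} \geq u^n$ by induction. The base case uses $b_1^1 = -b_0$, giving $\phi(u^1) = b_0 u^0 = \phi(u^0) + k^\gamma f(u^0) \geq \phi(u^0)$. For the inductive step, subtract the scheme at time $n$ from the scheme at time $n+1$, reindex $m \mapsto m-1$ in the resulting second sum, and apply the identity $b_n^n - b_{n+1}^{n+1} = b_n$ (the same identity used for the explicit case) to obtain
\[
\phi(u^{n+1}) - \phi(u^n) = -b_1 u^n + \sum_{m=2}^n (b_{m-1}-b_m)\, u^{n+1-m} + b_n u^0.
\]
With the induction hypothesis $u^0 \leq u^{n+1-m} \leq u^n$, together with $b_n \leq 0$ and the monotonicity $b_{m-1} \leq b_m$ for $m \geq 2$ (discussed below), each term $(b_{m-1}-b_m)\, u^{n+1-m}$ is minorized by $(b_{m-1}-b_m)\, u^n$; the sum telescopes and the right-hand side collapses to $-b_n(u^n - u^0) \geq 0$. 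Hence $\phi(u^{n+1}) \geq \phi(u^n)$, which gives $u^{n+1} \geq u^n$ since $\phi$ is (strictly) increasing on $[0,M]$.

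The comparison principle then follows from a parallel induction. For the ``$\geq$'' direction, assuming $w^m \geq u^m$ for $0 \leq m \leq n$, the hypothesis $(\mathcal{D}_h^{\gamma} w)_{n+1} \geq f(w^{n+1})$ rearranges to
\[
\phi(w^{n+1}) \geq R_{n+1}(w^0, \ldots, w^n) \geq R_{n+1}(u^0, \ldots, u^n) = \phi(u^{n+1}),
\]
where the second inequality uses the componentwise monotonicity of $R_{n+1}$ in each slot (again because $-b_m \geq 0$ for $m \geq 1$ and $-b_{n+1}^{n+1} \geq 0$). Monotonicity of $\phi$ on $[0, M]$ then yields $w^{n+1} \geq u^{n+1}$, and the convention $n < N^*$ keeps $u^{n+1}$ inside $[0,M]$. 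The ``$\leq$'' direction is completely symmetric, with all inequalities reversed.

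The main obstacle I expect is the verification that $b_m$ is increasing in $m \geq 1$, i.e., $b_{m-1} \leq b_m$. This is equivalent to the non-negativity of the third forward difference of $g(x) = x^{1-\gamma}$, which follows from $g'''(x) = \gamma(1+\gamma)(1-\gamma)\, x^{-\gamma-2} > 0$ on $(0,\infty)$ by writing the third difference as a triple integral of $g'''$ over the appropriate tetrahedron.
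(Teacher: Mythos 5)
Your proof is correct. The paper omits the proof of this lemma entirely (it only says the argument is ``similar''), and you supply exactly the ingredient the implicit case needs: the map $\phi(z)=b_0z-k^{\gamma}f(z)$ is monotone on $[0,M]$ by the definition of $M$, so the scheme can be inverted and $\phi(w^{n+1})\ge\phi(u^{n+1})$ transferred to $w^{n+1}\ge u^{n+1}$; your comparison step is then the same induction as in Lemma \ref{lmm:discretecompex}. Where you genuinely diverge is the monotonicity of $\{u^n\}$: your telescoping argument needs the extra fact $b_{m-1}\le b_m$ for $m\ge 2$ (positivity of the third difference of $x^{1-\gamma}$), which is true and correctly justified via $g'''>0$, but is more machinery than required. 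The argument the paper indicates for the explicit analogue --- ``the facts $b_m<0$ for $m\ge1$ and $b_n+b_{n+1}^{n+1}=b_n^n$'' --- compares the two sums termwise after a unit index shift: since $-b_m\ge0$ and, by induction, $u^{n+1-m}\ge u^{n-m}$, one gets $-\sum_{m=1}^{n}b_mu^{n+1-m}-b_{n+1}^{n+1}u^0\ \ge\ -\sum_{m=1}^{n-1}b_mu^{n-m}-(b_n+b_{n+1}^{n+1})u^0=\phi(u^n)$, with no information about how $b_m$ varies in $m$. Your route reaches the same place at the cost of the third-difference computation. One small caution, shared with the paper's own setup: the definition of $M$ only yields $\phi'\ge0$ on $[0,M]$, so the inference ``$\phi(w)\ge\phi(u)\Rightarrow w\ge u$'' (which you call strict monotonicity) tacitly assumes the root of the implicit scheme on $[u_0,M]$ is unique; this is implicit in the paper's convention for defining $u^{n+1}$ and $N^*$, but is worth stating.
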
 

Combining these facts, we have the following claim
\begin{thm}\label{thm:diffnum}
Assume that $f$ is nonnegative, non-decreasing and $u_0 \ge 0$ on $[0,\infty)$. In addition, suppose $f\in C^1[0,\infty)$, $f'$ is locally Lipschitz and the solution $u$ to \eqref{eq:ode1} is convex. Let $u_{ex}^n$ be the solution given by the explicit scheme \eqref{eq:diffex} and $u_{im}^n$ be given by the implicit scheme \eqref{eq:diffim}. Then $\{u_{ex}^n\}$ and $\{u_{im}^n\}$ are monotone sequences, and we have \[
u_{ex}^n\le u(nk)\le u_{im}^n, ~ n<\min(N^*, T_b/k).
\]
\end{thm}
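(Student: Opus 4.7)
The monotonicity of $\{u_{ex}^n\}$ is already recorded in the lemma preceding Lemma~\ref{lmm:discretecompex}, and the monotonicity of $\{u_{im}^n\}$ is built into Lemma~\ref{lmm:discretecompim}, so the substance of the theorem is the two-sided bound on $u(nk)$. My plan is to substitute $w^n = u(t_n)$ into the discrete comparison principles of those two lemmas; for this it suffices to establish the sandwich
\begin{equation*}
f(u(t_n))\le (\mathcal{D}_h^\gamma u)_{n+1}\le f(u(t_{n+1})),\qquad n\ge 0,
\end{equation*}
evaluated on the exact solution. The upper bound together with the $\le$-half of Lemma~\ref{lmm:discretecompim} gives $u(nk)\le u_{im}^n$, and the lower bound together with the $\ge$-half of Lemma~\ref{lmm:discretecompex} gives $u_{ex}^n\le u(nk)$.

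Both bounds start from the integral representation
\begin{equation*}
(\mathcal{D}_h^\gamma u)_{n+1}=\frac{1}{\Gamma(1-\gamma)}\int_0^{t_{n+1}}\tilde u'(s)(t_{n+1}-s)^{-\gamma}\,ds,
\end{equation*}
where $\tilde u$ is the piecewise linear interpolant of $u$ at the grid nodes, so $\tilde u'\equiv \bar u_j:=(u(t_{j+1})-u(t_j))/k$ on $(t_j,t_{j+1})$ and by the mean value theorem $\bar u_j=u'(\xi_j)$ for some $\xi_j\in(t_j,t_{j+1})$. Convexity of $u$ makes $u'$ non-decreasing on $(0,T_b)$. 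The upper bound is a Chebyshev-type rearrangement: on each subinterval $u'(s)-\bar u_j$ passes from non-positive to non-negative at $\xi_j$ with zero mean, while the weight $(t_{n+1}-s)^{-\gamma}$ is non-decreasing in $s$, so that
\begin{equation*}
\int_{t_j}^{t_{j+1}}(u'(s)-\bar u_j)(t_{n+1}-s)^{-\gamma}\,ds\ge 0;
\end{equation*}
summing in $j$ yields $(\mathcal{D}_h^\gamma u)_{n+1}\le D_c^\gamma u(t_{n+1})=f(u(t_{n+1}))$.

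For the lower bound I plan a time-shift argument. For $s\in(t_j,t_{j+1})$ with $j\ge 1$ we have $s-k\in(t_{j-1},t_j)$, and monotonicity of $u'$ gives $\bar u_j=u'(\xi_j)\ge u'(t_j)\ge u'(s-k)$. Dropping the non-negative contribution of the first subinterval $(0,k)$ (where $\tilde u'=\bar u_0\ge 0$ since $u$ is non-decreasing) and substituting $r=s-k$,
\begin{equation*}
(\mathcal{D}_h^\gamma u)_{n+1}\ge \frac{1}{\Gamma(1-\gamma)}\int_0^{t_n}u'(r)(t_n-r)^{-\gamma}\,dr=f(u(t_n)),
\end{equation*}
which is valid for $n\ge 1$. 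The main obstacle is the case $n=0$, where the shift sends $(0,k)$ off the positive axis so the above manipulation only yields $(\mathcal{D}_h^\gamma u)_1\ge 0$. I plan to dispatch this case directly: $(\mathcal{D}_h^\gamma u)_1=(u(t_1)-u_0)/(\Gamma(2-\gamma)k^\gamma)$, and the integral form~\eqref{eq:vol} together with the fact that $f\circ u$ is non-decreasing gives $u(t_1)-u_0\ge f(u_0)k^\gamma/\Gamma(1+\gamma)$. Hence $(\mathcal{D}_h^\gamma u)_1\ge f(u_0)$ reduces to $\Gamma(1+\gamma)\Gamma(2-\gamma)\le 1$, equivalent via the reflection formula to $\sin(\pi\gamma)\ge \pi\gamma(1-\gamma)$ on $(0,1)$, an elementary inequality on $[0,\pi]$. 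With the sandwich established for all $n\ge 0$, the two discrete comparison lemmas close the argument.
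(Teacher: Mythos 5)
Your proposal is correct and follows essentially the same route as the paper's own proof: establish the sandwich $f(u(t_n))\le(\mathcal{D}_h^{\gamma}u)_{n+1}\le f(u(t_{n+1}))$ from convexity of $u$, using the shift $s\mapsto s-k$ together with $u'$ non-decreasing for the lower bound and the Chebyshev-type rearrangement inequality for the upper bound, and then invoke the two discrete comparison lemmas. The only substantive difference is that you treat the first-step term explicitly — dropping the non-negative $j=0$ contribution and verifying $(\mathcal{D}_h^{\gamma}u)_1\ge f(u_0)$ via $\Gamma(1+\gamma)\Gamma(2-\gamma)\le 1$ (which also follows simply from $\Gamma\le 1$ on $[1,2]$) — a boundary case the paper's displayed chain, which would otherwise involve $u'(t_0)$, passes over silently.
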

\begin{proof}
The monotonicity for $\{u_{ex}^n\}$ and $\{u_{im}^n\}$ follows Lemma \ref{lmm:discretecompex} and Lemma \ref{lmm:discretecompim}.

We only need to show $u_{ex}^n\le u(nk)\le u_{im}^n$. 
If $f\in C^1[0,\infty)$ and $f'$ is locally Lipschitz, then $u\in C^1(0, T_b)\cap C^0[0, T_b)$ by Lemma \ref{lmm:regu}.  Since $u$ is convex, $u'$ is non-decreasing. 
Denoting 
\[
w^n=u(nk),
\]
and thus
\[
(\mathcal{D}_hw)_{n+1}=\frac{1}{\Gamma(1-\gamma)}\sum_{m=0}^n\int_{t_m}^{t_{m+1}}\frac{(u(t_{m+1})-u(t_m))/k}{(t_{n+1}-s)^{\gamma}}ds.
\]

For the explicit scheme, we have 
\begin{multline*}
\int_{t_m}^{t_{m+1}}\frac{(u(t_{m+1})-u(t_m))/k}{(t_{n+1}-s)^{\gamma}}ds
\ge \int_{t_m}^{t_{m+1}}\frac{u'(t_m)}{(t_{n+1}-s)^{\gamma}}ds
= \int_{t_{m-1}}^{t_{m}}\frac{u'(t_m)}{(t_{n+1}-k-s)^{\gamma}}ds\\
\ge \int_{t_{m-1}}^{t_{m}}\frac{u'(s)}{(t_{n+1}-k-s)^{\gamma}}ds.
\end{multline*}
As a result, we have $(\mathcal{D}_hw)_{n+1}\ge D_c^{\gamma}u(t_{n})=f(u(t_n))=f(w^n)$ and the result follows from Lemma \ref{lmm:discretecompex}.

For implicit scheme, 
\begin{multline*}
f(u(t_{n+1}))-(\mathcal{D}_hw)_{n+1}=
D_c^{\gamma}u(t_{n+1})-(\mathcal{D}_hw)_{n+1}\\
=\sum_{m=0}^n\int_{t_m}^{t_{m+1}}\frac{1}{(t_{n+1}-s)^{\gamma}}\left(u'(s)-\frac{1}{k}\int_{t_m}^{t_{m+1}}u'(\tau)d\tau\right)ds\ge 0,
\end{multline*}
since $u'$ is non-decreasing. The last inequality is obtained by applying 
\begin{equation}\label{last}
\int_a^b f g dx \ge \frac{1}{b-a}\int_a^b g dx \int_a^b f dx,
\end{equation}
 if both $f$ and $g$ are non-decreasing non-negative continuous functions. In fact, there is $\xi\in (a, b)$ such that $g(\xi)=\frac{1}{b-a}\int_a^b g(x)dx$, and
\begin{multline*}
\int_a^b f(x)(g(x)-g(\xi))dx=\int _a^{\xi} f(x)(g(x)-g(\xi))dx+\int _{\xi}^{b} f(x)(g(x)-g(\xi))dx\\
\geq f(\xi)\int _a^{b} (g(x)-g(\xi))dx=0.
\end{multline*}
The claim then follows from Lemma \ref{lmm:discretecompim}.
\end{proof}

\begin{rmk}
The explicit schemes can be used to prove the stability and convergence of some approximation schemes for fractional PDEs and thus the convergence and existence of solutions. The implicit schemes can be used to prove positivity of solutions and to estimate the blow-up time. 
\end{rmk}

\subsection{Numerical simulations}

For $f(u)=Au^p$, the numerical solutions using explicit schemes \eqref{eq:schemeInt}, \eqref{eq:schemeInt2} and \eqref{eq:diffex} never break up (i.e. $u^n$ can be computed for any $n\ge 1$). The implicit scheme is more suitable for the study of blowup. If we use the implicit scheme \eqref{eq:diffim}, we look for the root of the scheme \eqref{eq:implicit} in $[u^n, M]$ to find $u^{n+1}$ where $M=(k^{-\gamma}/(pA\Gamma(2-\gamma))^{1/(p-1)}$. Suppose that the sequence terminates at $N^*$ and numerically we set $T_b(k)=N^* k$. It is expected that $T_b(k)\to T_b$ as $k\to 0^+$. 

For $p=2$, the implicit scheme \eqref{eq:implicit} can be solved exactly and therefore this allows us to compute the numerical solutions accurately and fast enough. Below, we do the numerical simulations using the implicit scheme for $f(u)=u^2$ by choosing $k$ sufficiently small.

\begin{figure}
	\begin{center}
		\includegraphics[width=0.8\textwidth]{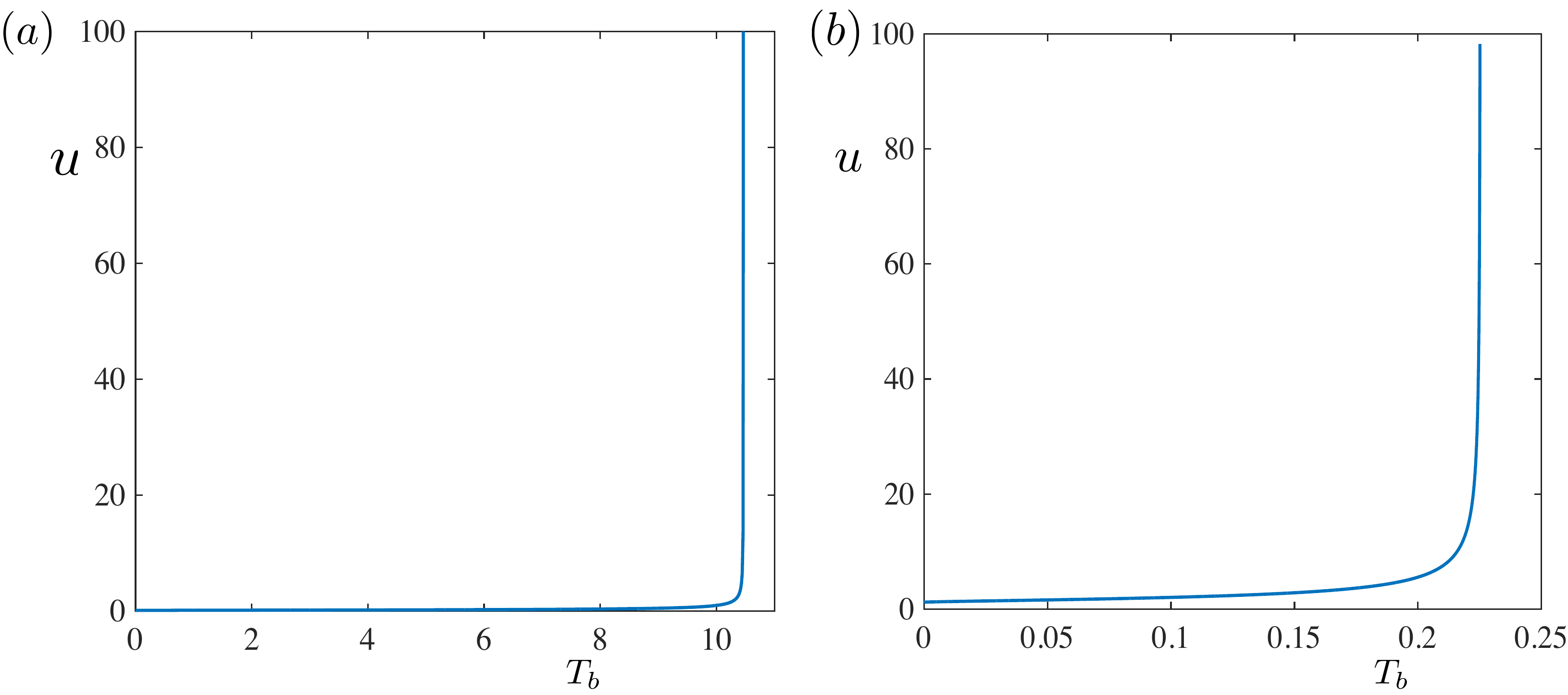}
	\end{center}
	\caption{Solution curves for $f(u)=Au^2$ with $u(0)=u_0$. (a). $A=1, u_0=0.12 ,\gamma=0.6$; (b). $A=1, u_0=1.2, \gamma=0.6$. }
	\label{fig:rep}
\end{figure}

In Figure \ref{fig:rep}, we sketch two typical solution curves. Figure \ref{fig:rep} (a) shows the solution curve with $u_0=0.12, \gamma=0.6$, while Figure \ref{fig:rep} (b) shows the solution curve with $u_0=1.2, \gamma=0.6$. Comparing the blow-up time in both cases, we find clearly that small $u_0$ defers the blowup while large $u_0$ accelerates the blowup.

\begin{figure}
	\begin{center}
		\includegraphics[width=0.8\textwidth]{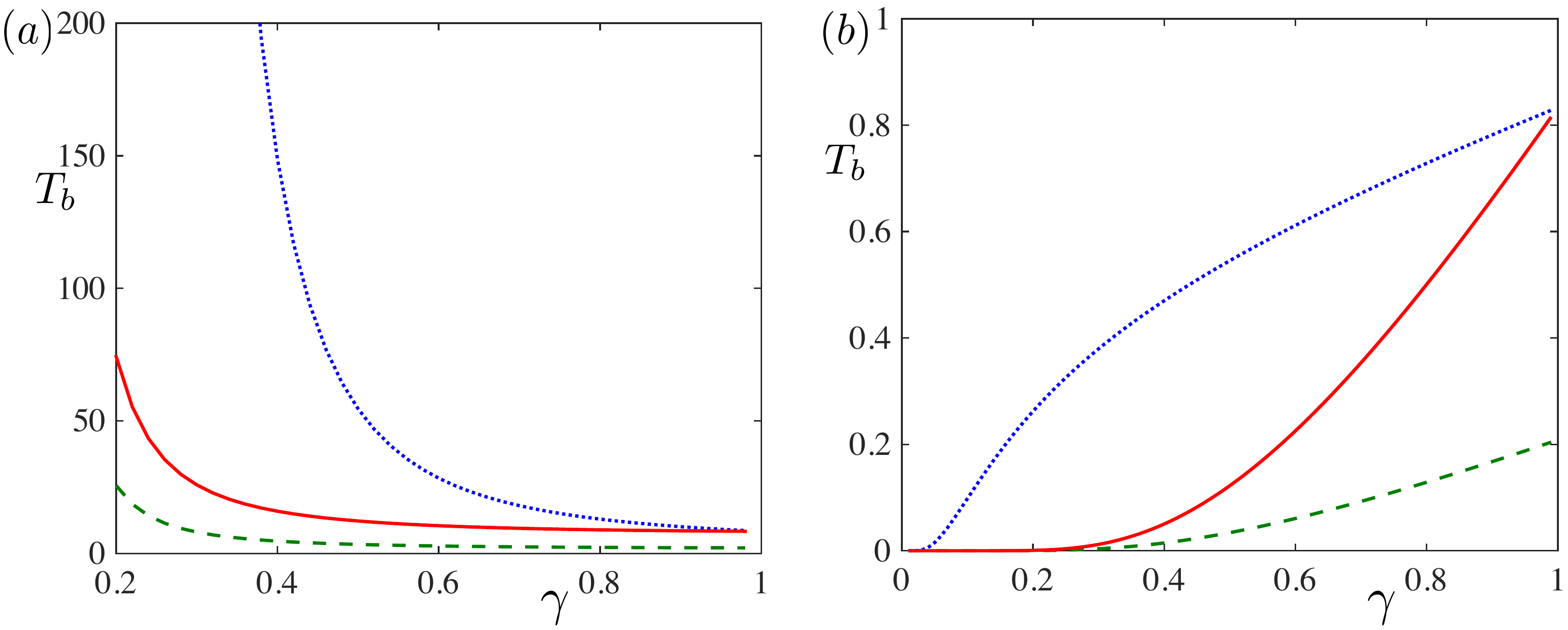}
	\end{center}
	\caption{Blow-up time versus $\gamma$. The red solid line shows the numerical results of the blow-up time. The blue dotted line is the estimated upper bound and the green dashed line is the lower bound, provided by Theorem \ref{buT}. (a).  $A=1, u_0=0.12$; (b). $A=1, u_0=1.2$. }
	\label{fig:blow-uptime}
\end{figure}
To investigate this issue further, in Figure \ref{fig:blow-uptime}, we plot the blow-up time versus $\gamma$, meanwhile we also plot the estimated upper and lower bounds gained from Theorem \ref{buT}. 
In the case $u_0=0.12$, the line of real blow-up time around $\gamma =0.2$ is quite steep. In the case  $u_0=1.2$, the line of real blow-up time around $\gamma=0$ is approximately equal to 0. 
The numerical results agree with our analysis in Section \ref{subsec:bounds}. If $u_0$ is big enough such that $u-u^2=u_0$ has no solution, i.e. $u_0>0.25$, the blow-up time decreases as $\gamma$ decreases, which means samller $\gamma$ accelerates the blowup. However, if $u_0$ is small, i.e. $u_0<0.25$, then the blow-up time increases as $\gamma$ decreases, which means samller  $\gamma$ defers the blowup. These observations agree with intuition that as the smaller $\gamma$ is, the stronger the memory effect is. 

\section{Appendix}\label{sec:app}
In this section, we restate the result in \cite{roberts1996growth} regarding the growth rate of \eqref{eq:ode2}, as we mentioned in Remark \ref{rate}. The statement is tailored to our problem, and we also present the proof for convenience.
In fact, we have the following statement.
\begin{pro}[\cite{roberts1996growth}]
For $p>1$, $\gamma\in (0,1)$, $A>0$, $u_0>0$, the solution of \eqref{eq:ode2} satisfies \eqref{ratea}, where $T_b$ is the blow-up time guaranteed by Theorem \ref{thm:increasing}.
\end{pro}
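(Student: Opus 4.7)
The plan is to justify the heuristic substitution from Remark~\ref{rate} by an asymptotic analysis of the Volterra integral equation
\begin{equation*}
u(t)=u_0+\frac{A}{\Gamma(\gamma)}\int_0^t (t-s)^{\gamma-1}u^p(s)\,ds.
\end{equation*}
The first step is to establish that the limit $C_*:=\lim_{t\to T_b^-}(T_b-t)^{\gamma/(p-1)}u(t)$ exists in $(0,\infty)$. A natural route is a self-similar rescaling: set $v_\lambda(\tau)=\lambda^{\gamma/(p-1)}u(T_b-\lambda\tau)$ for $\tau\in(0,T_b/\lambda]$ and send $\lambda\to 0^+$, then use the monotonicity of $u$ from Theorem~\ref{thm:mon}, the two-sided bounds of Proposition~\ref{pro:bound} (applied on the shrinking intervals $[T_b-\lambda\tau,T_b)$), and the integral equation to show that $\{v_\lambda\}$ is locally equicontinuous and sandwiched between two positive constant multiples of $\tau^{-\gamma/(p-1)}$. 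A compactness argument combined with the fixed-point relation that any limit profile satisfies then identifies the limit as the unique self-similar profile $C_*\tau^{-\gamma/(p-1)}$.

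The second step is the key integral computation. Substituting $s=T_b-(T_b-t)\sigma$, so that $(t-s)=(T_b-t)(\sigma-1)$ and $(T_b-s)=(T_b-t)\sigma$, one gets
\begin{equation*}
\int_0^t (t-s)^{\gamma-1}(T_b-s)^{-p\gamma/(p-1)}\,ds
=(T_b-t)^{-\gamma/(p-1)}\int_1^{T_b/(T_b-t)}(\sigma-1)^{\gamma-1}\sigma^{-p\gamma/(p-1)}\,d\sigma.
\end{equation*}
As $t\to T_b^-$ the upper limit tends to $\infty$, and the change of variables $\sigma=1/(1-v)$ turns the improper integral into a Beta function,
\begin{equation*}
\int_1^{\infty}(\sigma-1)^{\gamma-1}\sigma^{-p\gamma/(p-1)}\,d\sigma
=\int_0^1 v^{\gamma-1}(1-v)^{\gamma/(p-1)-1}\,dv=\frac{\Gamma(\gamma)\,\Gamma(\gamma/(p-1))}{\Gamma(p\gamma/(p-1))},
\end{equation*}
using $\gamma+\gamma/(p-1)=p\gamma/(p-1)$.

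In the third step I insert $u(s)=C_*(T_b-s)^{-\gamma/(p-1)}(1+o(1))$ into the Volterra equation and apply dominated convergence to the resulting integral: the contribution from any fixed interval $[0,t_0]$ bounded away from $T_b$ is $O(1)$ and thus of lower order than $(T_b-t)^{-\gamma/(p-1)}$, while the contribution from $[t_0,t]$ is controlled by the computation of the previous paragraph. Matching the leading-order terms on both sides of the integral equation yields
\begin{equation*}
C_*=\frac{AC_*^p}{\Gamma(\gamma)}\cdot\frac{\Gamma(\gamma)\,\Gamma(\gamma/(p-1))}{\Gamma(p\gamma/(p-1))},
\end{equation*}
so that $C_*^{p-1}=\Gamma(p\gamma/(p-1))/[A\Gamma(\gamma/(p-1))]$, matching the constant in \eqref{ratea}. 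Finally, since $(T_b-t)^{-1}-T_b^{-1}=t/[T_b(T_b-t)]\sim (T_b-t)^{-1}$ and $u(t)-u_0\sim u(t)$ as $t\to T_b^-$, the equivalent $u(t)\sim C_*(T_b-t)^{-\gamma/(p-1)}$ translates into the stated form \eqref{ratea}.

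The main obstacle is the first step: upgrading the two-sided bound $c_1(T_b-t)^{-\gamma/(p-1)}\le u(t)\le c_2(T_b-t)^{-\gamma/(p-1)}$ (which follows more or less directly from the methods already used in Section~\ref{subsec:bounds}) to a true asymptotic equivalence with a single constant $C_*$. Super/sub-solution arguments alone only give the two-sided bound; the uniqueness of the self-similar profile must be extracted either from compactness of $\{v_\lambda\}$ together with identification of all subsequential limits, or by showing directly that the quotient $u(t)(T_b-t)^{\gamma/(p-1)}$ is monotone (or Cauchy) near $T_b$ via an iteration on the integral equation.
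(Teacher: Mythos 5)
Your computation of the matching constant is correct and follows a genuinely more elementary route than the paper's. The paper transforms to the variable $\eta=(T_b-t)^{-1}-T_b^{-1}$, rescales by $\xi=\eta\tau$, and then invokes the Mellin--Parseval machinery of Bleistein--Handelsman, plugging in the ansatz $\omega(\eta)\sim C\eta^{l}$ and reading off the residue at the simple pole to obtain $\omega(\eta)\sim \frac{A\Gamma(pl-\gamma)}{\Gamma(pl)}\omega(\eta)^p\eta^{-\gamma}$ with $l=\gamma/(p-1)$. Your substitution $s=T_b-(T_b-t)\sigma$ followed by $\sigma=1/(1-v)$ reduces the same matching to the Beta integral $\int_0^1 v^{\gamma-1}(1-v)^{\gamma/(p-1)-1}\,dv=\Gamma(\gamma)\Gamma(\gamma/(p-1))/\Gamma(p\gamma/(p-1))$, and the exponent bookkeeping ($\gamma-p\gamma/(p-1)=-\gamma/(p-1)$) and the resulting $C_*^{p-1}=\Gamma(p\gamma/(p-1))/[A\Gamma(\gamma/(p-1))]$ both check out, as does the final translation to the form \eqref{ratea} via $(T_b-t)^{-1}-T_b^{-1}\sim(T_b-t)^{-1}$ and $u-u_0\sim u$. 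What the Mellin route buys is a systematic way to see that the pole you hit is the dominant one and that lower-order contributions (e.g.\ from the prefactor $(\eta/(\eta+\eta_0))^{\gamma-1}$ and from the region where the ansatz fails) are genuinely subdominant; your direct argument achieves the same by the explicit $O(1)$ estimate on $[0,t_0]$, which is adequate here.

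The honest caveat you raise about your first step is the right one, but you should be aware that the paper's own proof does not close it either: both arguments are ansatz-plus-matching, presupposing that $u(t)(T_b-t)^{\gamma/(p-1)}$ converges to a single constant rather than merely oscillating between two positive bounds. Your proposed repair (rescaled family $v_\lambda$, compactness, identification of subsequential limits with the unique self-similar profile) is a reasonable program, but the identification step is itself a nontrivial uniqueness statement for the limiting convolution equation on $(0,\infty)$ and is not supplied by Proposition \ref{pro:bound} or Theorem \ref{thm:mon}; as written it is a sketch, not a proof. Since the paper explicitly attributes the result to Roberts--Olmstead and presents only the formal derivation, your proposal is at essentially the same level of rigor on this point while being more transparent about where the gap sits.
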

\begin{proof}
We set $v(t)=u(t)-u_0$. First, we use the following transformation:
\begin{equation}
\eta(t)=(T_b-t)^{-1}-\eta_0,\quad \eta_0=T_b^{-1},\quad \omega(\eta)=v(t).
\end{equation}
Now based on the definition of $T_b$, we have $\omega(\eta)\rightarrow \infty$ as $\eta\rightarrow \infty$. The corresponding equation for $\omega$ is as follows:
\begin{equation}\label{23}
\omega(\eta)=\frac{A}{\Gamma(\gamma)}\int_0^{\eta}(\eta-\xi)^{\gamma-1}(\xi+\eta_0)^{1-\gamma}(\eta+\eta_0)^{1-\gamma}(\xi+\eta_0)^{-2}(\omega(\xi)+u_0)^pd\xi,
\end{equation}
where
\begin{equation}
\Phi(\xi)=(\xi+\eta_0)^{-2}(\omega(\xi)+u_0)^p.
\end{equation}
Now let $\xi=\eta\tau$, then 
\begin{equation}
\omega(\eta)=\frac{A}{\Gamma(\gamma)}\eta\int_0^1\eta^{\gamma-1}(1-\tau)^{\gamma-1}(\eta\tau+\eta_0)^{-1-\gamma}(\eta+\eta_0)^{1-\gamma}(\omega(\eta\tau)+u_0)^pd\tau.
\end{equation}
Now based on \cite{bleistein1975asymptotic}, the right hand side as $\eta\rightarrow \infty$ has the following asymptotic behavior:
\begin{align*}
\omega(\eta)\sim \eta\frac{A}{\Gamma(\gamma)}\left(\frac{\eta}{\eta+\eta_0}\right)^{\gamma-1}\int_0^{\infty}K(\tau)F(\eta\tau)d\tau\sim \eta \frac{A}{\Gamma(\gamma)}\int_0^{\infty}K(\tau)F(\eta\tau)d\tau,
\end{align*}
where
\begin{align*}
K(\tau)=(1-\tau)^{\gamma-1}\theta(1-\tau),\quad F(\eta\tau)=(\eta\tau+\eta_0)^{-1-\gamma}(\omega(\eta\tau)+u_0)^p.
\end{align*}
Here $\theta(s)$ is the standard Heaviside function. As in \cite{bleistein1975asymptotic}, we use Parseval formula and Mellin transform, then
\begin{align*}
\omega(\eta)\sim \eta  \frac{A}{\Gamma(\gamma)} \frac{1}{2\pi i}\int_{c-i\infty}^{c+i\infty} M[K(\tau);1-z]M[F(\eta\tau);z]dz,
\end{align*}
where 
$$
M(\nu(\tau);z)=\int_0^{\infty} \tau^{z-1}\nu(\tau)d\tau.
$$
Now notice that
$$
M[F(\eta\tau);z]=\eta^{-z}M(F(\tau);z),
$$
and
$$
M[K(\tau);1-z]=\frac{\Gamma(\gamma)\Gamma(1-z)}{\Gamma(1+\gamma-z)}.
$$
Hence,
$$
\omega(\eta)\sim \eta \frac{A}{2\pi i}\int_{c-i\infty}^{c+i\infty}\eta^{-z}\frac{\Gamma(1-z)}{\Gamma(1+\gamma-z)}M[F(\tau);z]dz.
$$
By pluging in the anzats $\omega(\eta)\sim C\eta^{l}$ and checking the simple pole of the integrand, we have
$$
\omega(\eta)\sim \frac{A\Gamma(pl-\gamma)}{\Gamma(pl)}\omega(\eta)^{p}\eta^{-\gamma} \mbox{ as }\eta\rightarrow \infty,
$$
which is what we need.
\end{proof}

\section*{Acknowledgements}
The work of J.-G Liu is partially supported by KI-Net NSF RNMS11-07444 and NSF DMS-1514826. Y. Feng is supported by NSF DMS-1252912.

\bibliographystyle{unsrt}
\bibliography{nonlinearFODE}
\end{document}